\documentclass[12pt,a4paper]{amsart}

\usepackage{amssymb}
\usepackage{amscd}
\usepackage{color}

\makeatletter
\@namedef{subjclassname@2020}{\textup{2020} Mathematics Subject Classification}
\makeatother

\def\frak{\mathfrak}
\def\Bbb{\mathbb}
\def\Cal{\mathcal}

\let\phi\varphi

\newcommand{\x}{\times}

\newcommand{\al}{\alpha}
\newcommand{\be}{\beta}

\newcommand{\la}{\lambda}

\newcommand{\ph}{\phi}
\newcommand{\ps}{\psi}

\newcommand{\si}{\sigma}
\newcommand{\ze}{\zeta}
\newcommand{\Ga}{\Gamma}
\newcommand{\La}{\Lambda}
\newcommand{\Ph}{\Phi}

\newcommand{\Om}{\Omega}

\def\Rho{\mbox{\textsf{P}}}

\newcommand{\barm}{\overline{M}}

\newcommand{\vol}{\operatorname{vol}}

\newcommand{\Ric}{\operatorname{Ric}}

\newcommand{\rpl}                         
{\mbox{$
\begin{picture}(12.7,8)(-.5,-1)
\put(0,0.2){$+$}
\put(4.2,2.8){\oval(8,8)[r]}
\end{picture}$}}

\numberwithin{equation}{section}

\newcounter{theorem}
\numberwithin{theorem}{section}
\newtheorem{thm}[theorem]{Theorem}
\newtheorem*{thm*}{Theorem}
\newtheorem{lemma}[theorem]{Lemma}
\newtheorem{prop}[theorem]{Proposition}
\newtheorem{cor}[theorem]{Corollary}
\newtheorem*{lemma*}{Lemma \thesubsection}
\newtheorem*{prop*}{Proposition \thesubsection}
\newtheorem*{cor*}{Corollary \thesubsection}

\theoremstyle{definition}
\newtheorem{definition}[theorem]{Definition}
\newtheorem*{definition*}{Definition \thesubsection}

\newtheorem*{example*}{Example \thesubsection}
\theoremstyle{remark}
\newtheorem{remark}[theorem]{Remark}
\newtheorem*{remark*}{Remark \thesubsection}

\def\sideremark#1{\ifvmode\leavevmode\fi\vadjust{\vbox to0pt{\vss
 \hbox to 0pt{\hskip\hsize\hskip1em
 \vbox{\hsize3cm\tiny\raggedright\pretolerance10000
  \noindent #1\hfill}\hss}\vbox to8pt{\vfil}\vss}}}%
                        
\begin{document}
\title{Projective Infinities and $b$-Calculus}

\author{Andreas \v Cap and A.\ Rod Gover}

\address{A.\v C.: Faculty of Mathematics\\
University of Vienna\\
Oskar--Morgenstern--Platz 1\\
1090 Wien\\
Austria\\
A.R.G.:Department of Mathematics\\
  The University of Auckland\\
  Private Bag 92019\\
  Auckland 1142\\
  New Zealand}
\email{Andreas.Cap@univie.ac.at}
\email{r.gover@auckland.ac.nz}

\begin{abstract}
  For a manifold $\barm$ with boundary $\partial M$ and interior $M$, we introduce
  and study a weakening of the concept of projective compactness for torsion-free
  linear connections on $M$, which we call projective pre-compactness. Via the
  Levi-Civita connection, this concept applies to pseudo-Riemannian metrics on
  $M$. This is motivated by applications to scattering theory and, especially, to
  general relativity (GR), in view of asymptotic forms of metrics used in these
  areas.

  In the general setting of a projectively pre-compact connection $\nabla$ we show
  that, assuming weak conditions on the asymptotics of Ricci curvature, there is an
  induced projective structure on the boundary. Under a slightly stronger condition
  on the Ricci curvature, we show explicitly that the standard tractor bundle and its
  normal tractor connection arise naturally on this boundary structure. The key
  ingredient to this is that $\nabla$ admits a smooth extension to the boundary as a
  linear connection on the tensor product of Melrose's b-tangent bundle with an
  appropriate density bundle, which then restricts to the boundary tractor bundle.

  A projectively pre-compact pseudo-Riemannian metric (satisfying the conditions on
  the asymptotics of Ricci curvature) is then shown to induce a holonomy reduction of
  the boundary projective structure to an indefinite orthogonal
  group. This  endows the boundary with a decomposition into so-called
  curved orbits, which are either open or embedded hypersurfaces, representing
  space-like, time-like and light-like infinities in a GR context. We introduce and
  study a new asymptotic form for such metrics which is available near any boundary
  point and relate it to an asymptotic form used in general relativity. The latter
  is only available near boundary points in the open curved orbits. We show that, in
  that region, projective pre-compactness essentially is equivalent to this
  asymptotic form from GR, and that projective compactness is equivalent to the
  vanishing of the mass aspect function.
  \end{abstract}

\subjclass[2020]{primary: 53C07; secondary: 83C30, 53B10, 53B21, 53C25, 53C50, 53Z05
}
\keywords{geometric compactifications, projective infinity, mass
  aspect, asymptotic forms for pseudo-Riemannian metrics}

\thanks{This work was funded in part by the Austrian Science Fund
  (FWF): 10.55776/P33559. For open access purposes, the authors have
  applied a CC BY public copyright license to any author-accepted
  manuscript version arising from this submission. A.\v C.\ thanks
  University of Auckland for hospitality. Both authors gratefully
  acknowledge support from the Royal Society of New Zealand via
  Marsden Grants 19-UOA-008 and 24-UOA-005  and from
    thematic program ``Differential Complexes: Theory, Discretization,
    and Applications'' at the Erwin Schr\"odinger Institute
    (ESI). This article is based upon work from COST Action CaLISTA
  CA21109 supported by COST (European Cooperation in Science and
  Technology). https://www.cost.eu.}

\maketitle

\pagestyle{myheadings} \markboth{\v Cap, Gover}{Projective infinities and $b$-calculus}

\section{Introduction}\label{1}
Conformal compactification was originally introduced by R.\ Penrose in the setting of
general relativity, and hence of Lorentzian metrics, with the aim to attach a
``boundary at infinity'' to non-compact space-times, and remains extremely important
\cite{Chrusciel,Fr,Penrose125}. Formally, this means finding a conformal embedding of
the space-time into a compact pseudo-Riemannian manifold and then considering the
(compact) closure of the image. A major motivation for the concept is that in
pseudo-Riemannian geometry null geodesics are conformally invariant up to
parametrization, so one can study ``where light rays meet infinity'' in this
picture. The basic concept makes sense in any signature and in the nicest cases this
leads to a conformal embedding of the space as the interior of a pseudo-Riemannian
manifold with boundary that has a specific behavior of the metric towards
infinity. In particular, this happens in many cases for complete pseudo-Riemannian
manifolds which are asymptotically hyperbolic, in an appropriate sense, and the
metric then induces a conformal structure on the boundary (i.e., the ``conformal
infinity''). Requiring the interior to be (Riemannian and negative) Einstein, one
arrives at the concept of Poincar\'e--Einstein manifolds which have become a major
research topic in geometric analysis over the last decades
\cite{AdSCFTreview,FGbook,GrZ,GuillamouAB,Ma-hodge,LeBrunH}.

Already the example of Minkowski space shows that things are
complicated in an asymptotically flat setting. Here the ``boundary''
is a union of hypersurface components which represent light-like
infinity and points glued to these, which represent space-like and
time-like infinities. While the behavior around light-like infinities
of a conformal compactification in this case is very satisfactory,
dealing with space-like and time-like infinities is subtle both from
the GR point of view and mathematically. In the GR literature attempts
to find other compactifications at space-like and time-like infinity
were made early, see e.g.\ \cite{Ashtekar-Romano} and in particular
\cite{Friedrich}, where it is indicated that such an alternative
compactification can be ``almost'' compatible with the projective
structure underlying the pseudo-Riemannian metric. In these
approaches, one obtains a boundary at infinity of hypersurface type
and there is the natural question of whether this inherits a
projective structure, making it into a \textit{projective infinity}.

The projective structure induced by a pseudo-Riemannian metric can be
viewed as the family of all \textit{geodesic paths} (or unparametrized
geodesics) of the Levi-Civita connection. More formally, one can view
it as the equivalence class of all torsion-free linear connections on
the tangent bundle, which have the same geodesics as the Levi-Civita
connection up to parametrization. In the GR literature, these
geometric aspects remain in the background, the focus is on a
certain asymptotic form for the pseudo-Riemannian metrics, in such a
compactification, near space-like and time-like infinity
\cite{Ashtekar-Romano,AM-VK}. A special case of this asymptotic form
is studied as a ``scattering metric'' in geometric scattering theory,
see Chapter 6 of \cite{Melrose}. 

One possibility to obtain a projective infinity comes from a general
  theory of projective compactifications, which was developed in
  \cite{Proj-comp,Proj-comp2} in the setting of torsion-free linear connections on
  the interior of a manifold with boundary. A change from a such a linear connection
to a projectively related one is described by a one-form. The basic definition of
projective compactness is that using a local defining function for the boundary to
construct a one-form (that does not admit a smooth extension to the boundary), the
resulting projective modification of the given connection \textit{does} admit such an
extension. So similarly to conformal compactness, where one requires the metric to
diverge in a specific way towards the boundary, one here requires the connection to
diverge in a specific way. In particular, since projective modifications of the given
connection admit a smooth extension to the boundary, the projective structure
determined by the connection admits a smooth extension.

A new feature of projective compactness is that the concept allows for and requires
an additional parameter $\beta$, called the \textit{order of projective compactness},
which turns out to be related to volume growth. This is a positive real number,
assumed to be $\leq 2$ to ensure that the boundary is at infinity, see
\cite{Proj-comp} for details. Depending on $\be$, a projective compactification may
induce different structures on the boundary, the main cases interest (so far) are
$\be=1$ and $\be=2$. For $\be=2$, one obtains a conformal structure on the boundary
and the resulting theory is rather closely related to (but different from) conformal
compactifications. The relevant case for the current article is $\be=1$. Assuming
that the boundary is totally geodesic, which amounts to asymptotic conditions on the
Ricci curvature, one here obtains an induced projective structure on the boundary.

In particular, one may apply this to the flat connection on $\Bbb
R^{n+1}$, for which, via central projection, the compactification can
be viewed as a closed hemisphere in $S^{n+1}$ with its standard (flat)
projective structure and the equator as the boundary at
infinity. Adding the Minkowski metric into the picture gives rise to
an additional structure on the boundary, which can be formalized as a
holonomy reduction in the sense of \cite{hol-red} of the canonical
Cartan connection, associated to the projective structure on the
boundary at infinity, to the group $SO(n,1)\subset SL(n+1,\Bbb
R)$. This in particular decomposes the boundary sphere into
 five so-called \textit{curved orbits}. Two of these are
  isomorphic copies of $n$-dimensional hyperbolic space, one is a copy
  of $n$-dimensional de-Sitter space, and a further two are copies of
  a conformal Riemannian sphere $S^{n-1}$.

One of the contributions of \cite{Proj-comp} is extending this picture
to general Ricci-flat connections and pseudo-Riemannian metrics, which
are projectively compact of order $1$. In the case of metrics, the work
\cite{Proj-comp} also studies the relation to asymptotic forms
$$
g=\frac{C}{\rho^4}d\rho^2+\frac{h}{\rho^2}
$$
for a local defining function $\rho$ for the boundary.  Here $C$ is
a smooth function that is asymptotic to $\pm 1$ to second order along
the boundary, i.e.~such that $C=\pm 1+\rho^2f$ for a function $f$ that
admits a smooth extension to the boundary. On the one hand, it is
shown in \cite{Proj-comp} that any metric that admits such a form is
projectively compact of order $1$. For Ricci-flat metrics,
\cite{Proj-comp} also proves a converse to this result, namely that
such an asymptotic form is always available (for appropriate defining
functions) locally around points of the open curved orbits from
above. All the developments in \cite{Proj-comp} heavily rely on
tractor calculus for projective structures, which provides an
equivalent encoding of the canonical Cartan connection and emphasises
the analogy to conformal geometry.

While these are very satisfactory results, there is a significant
drawback from the point of view of general relativity. In the
asymptotic forms considered in the physics literature, see
e.g.\ \cite{Ashtekar-Romano} and \cite{AM-VK}, one requires that the
function $C$ mentioned above has the form $\pm 1+\rho f$ and the
boundary value of $f$ (whose vanishing is equivalent to the form
required for projective compactness), called the \textit{mass aspect},
is closely related to the total mass of the space-time. Thus true
projective compactness seems to be incompatible with non-zero
mass. This was also emphasized in the recent articles
\cite{Bo-He,Bo-He2,BCH}, which explicitly use the concept of
projective compactness from \cite{Proj-comp}. However, it was noted in
these works that for the more general asymptotic form one obtains most
of the features of projective compactness.

The current article addresses and clarifies exactly this issue. We
introduce a weakening of the concept of projective compactness that we
call \textit{projective pre-compactness}. We consider the same
projective modification as in the definition of projective
compactness, but do not require that the resulting connection extends
smoothly to the boundary. Rather we define and require a very slight
weakening of this condition. Of course, this means that the projective
structure determined by a projectively pre-compact connection does
\textit{not} (in general) admit a smooth extension to the
boundary. (This is required. Indeed, as shown in \cite{Proj-comp} a
Ricci flat connection is necessarily projectively compact of order one
if its projective structure extends.)  Hence the basic tools used for
the study of projective compactness so far are not available in our
setting. Nevertheless we are able to prove that the main features of
projectively compact connections and metrics are still available in
 this  more general setting. Even in the projectively
compact case, the results we obtain are stronger than the existing
ones while proofs are significantly simpler in general.

The main technical tool we use to replace tractors is the \textit{b-tangent bundle}
introduced by R.\ Melrose and used in his school of geometric analysis (see e.g.\
\cite{Melrose}). Given a manifold $\barm$ with boundary $\partial M$ and interior
$M$, this is a vector bundle $T^b\barm\to \barm$, whose sheaf of smooth sections is
the sheaf $\frak X_{\partial}$ of vector fields on $\barm$ which are tangent to
$\partial M$ along $\partial M$. The existence of such a bundle follows from the fact
that $\frak X_{\partial}$ is a locally free sheaf. Once this vector bundle is
available, one can of course apply all the tools from differential geometry to it,
and particular the notion of a linear connection makes sense. Our key technical
result is that a projectively pre-compact linear connection on the interior $M$ of
$\barm$ extends to a linear connection on $T^b\barm(-1)$, the tensor product of
$T^b\barm$ with an appropriate density bundle. While we don't use tractors at this
stage, it should be emphasized that the idea for this approach comes from tractors,
see Section \ref{2.9} for details.

An overview of the results we obtain is as follows. Section 2 studies
projective pre-compactness in the setting of connections. After
introducing the concept, we prove that, for $\barm=M\cup\partial M$, a
projectively pre-compact linear connection $\nabla$ on $M$ gives rise
to a canonical weighted symmetric bilinear form $\Phi$ on
$T\barm|_{\partial M}\x T\partial M$, which restricts to a weighted
second fundamental form on $T\partial M\x T\partial M$, see Theorem
\ref{thm2.4}. This form is closely related to the asymptotics of the
Ricci curvature of $\nabla$, see Theorem \ref{thm2.6}. The vanishing of
this second fundamental form characterises a totally geodesic boundary
and, assuming this, we show in Theorem \ref{thm2.6} that $\nabla$
induces a projective structure on the boundary $\partial M$, so this
becomes the \textit{projective infinity} of $\nabla$.

For the next step, we need a slightly stronger assumption, namely that
the canonical weighted form $\Phi$ on $T\barm|_{\partial M}\x
T\partial M$ vanishes identically. This can also be characterized in
terms of the asymptotics of the Ricci curvature (or, equivalently, the
Schouten tensor) of $\nabla$ and we refer to is as the boundary being
\textit{strongly totally geodesic}. Under this assumption, Theorem
\ref{thm2.7} shows that $\nabla$ extends to a linear connection on the
bundle $T^b\barm(-1)$ which, in spite of the simple proof, is the main
technical input for all further developments. Our main result in the
general setting of connections is Theorem \ref{thm2.8}, which shows
that the standard tractor bundle and tractor connection associated to
the induced projective structure on the boundary arise  as
restrictions from the weighted b-tangent bundle and its induced
connection.
\begin{thm*}
  For $\barm=M\cup\partial M$, let $\nabla$ be an affine connection on
  $M$, which is projectively pre-compact and let $\Rho$ be the
  Schouten tensor of $\nabla$. Suppose further that for all
  $\xi,\eta\in\frak X(\barm)$, the function $\Rho(\xi,\eta)$ admits a
  smooth extension to the boundary if at least one of the fields lies
  in $\frak X_{\partial}(\barm)$ and its boundary value vanishes if
  both fields lie in $\frak X_{\partial}(\barm)$.  Then the bundle
  $T^b\barm(-1)|_{\partial M}$ with the connection induced by $\nabla$
  can be naturally identified with the projective standard tractor
  bundle of the induced projective structure on $\partial M$ and its
  normal tractor connection.
\end{thm*}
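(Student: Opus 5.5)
The plan is to assemble the results announced for Section 2 and then check the normality of the induced connection by a direct computation in a convenient frame.

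By Theorem \ref{thm2.7}, the hypothesis that $\Rho(\xi,\eta)$ extends when one field lies in $\frak X_\partial(\barm)$, and vanishes on the boundary when both do, says exactly that $\Phi$ vanishes identically, i.e.\ the boundary is strongly totally geodesic. Hence $\nabla$ extends to a linear connection on $T^b\barm(-1)$. In particular, by Theorem \ref{thm2.6}, $\partial M$ carries an induced projective structure. Restricting to the boundary gives a rank $n+1$ bundle $\Cal T:=T^b\barm(-1)|_{\partial M}$ with a linear connection $\nabla^{\Cal T}$ along $\partial M$.

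The first step is to exhibit the tractor filtration on $\Cal T$. The anchor $T^b\barm\to T\barm$ restricts over $\partial M$ to a surjection onto $T\partial M$. Its kernel is the line spanned by $\rho\partial_\rho$, and this line is independent of the choice of defining function $\rho$. This gives an exact sequence
\[
0\to \Cal E(-1)\to \Cal T\to T\partial M(-1)\to 0,
\]
once we check that the kernel line, twisted by the density bundle, is canonically $\Cal E(-1)$ on $\partial M$, with the density bundles matching those of the induced projective structure. This is the standard projective tractor filtration $\Cal T^*$-dual form $\Cal E(-1)\subset\Cal T$ with quotient $T\partial M(-1)$. Next, a choice of defining function, or equivalently a connection in the induced projective class, splits the sequence. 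I would then compute $\nabla^{\Cal T}$ in this splitting and aim for the normal tractor connection formula
\[
\nabla_\xi\binom{\eta}{\sigma}=\binom{D_\xi\eta+\sigma\xi}{D_\xi\sigma-\Rho^{\partial}(\xi,\eta)},
\]
where $D$ is the induced boundary connection and $\Rho^\partial$ is its Schouten tensor. The terms $\sigma\xi$ and $D_\xi\eta$ should come from the projective modification $\hat\nabla=\nabla+\frac{d\rho}{\rho}$-type change in the definition of pre-compactness, applied to $\rho\partial_\rho$ and to tangential fields. The bottom slot picks up the component of $\hat\nabla_\xi\eta$ along $\rho\partial_\rho$, and this is a curvature-type term expressed through $\Rho$.

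The main obstacle is identifying that bottom-left entry with the Schouten tensor $\Rho^\partial$ of the boundary projective structure, rather than some other tensor in the same class; this is where normality is really encoded. Rather than compute $\Rho^\partial$ by a Gauss-type equation, I would use the characterization of the normal tractor connection as the unique connection compatible with the filtration whose curvature lies in the kernel of the Ricci-type contraction (i.e.\ the curvature $\nabla^{\Cal T}$ is Weyl-type). Compatibility with the filtration, together with the leading terms above, is immediate from the splitting. So it suffices to compute the curvature of the extended connection on $T^b\barm(-1)$. In the interior this curvature is the curvature of the projectively modified connection acting on weighted vectors, and its Ricci contraction is governed by $\Rho$ and its derivatives. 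The boundary limit of this contraction, restricted to tangential entries, vanishes by the hypothesis on the asymptotics of $\Rho$. This gives normality and hence the identification. Finally, the identification is independent of the choice of $\rho$, since changing $\rho$ changes the splitting exactly by the projective tractor transformation law.
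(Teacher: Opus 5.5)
Your route differs from the paper's at the decisive step. It can be made to work, but as written that step has a gap. The paper computes the bottom-left entry explicitly: via \eqref{fund-form2} and the vanishing of $\Rho$ on tangential entries it equals $-\Rho^\rho(\xi,\eta)$. A Gauss-type computation of $d\rho(R^\rho(\nu,\tilde\xi)\tilde\ze)$ then gives $\Ric^\rho-\Ric^\rho_\partial=\Rho^\rho$ on $T\partial M$, so the tangential boundary value of $\Rho^\rho$ is the boundary Schouten tensor.

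You instead want to read off normality from the curvature, and two things are off.

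\emph{First, the curvature is misidentified.} Over $M$ the extended connection on $T^b\barm(-1)$ is $\nabla$ itself acting on $TM(-1)$. It is not $\nabla^\rho$, which on weight $-1$ differs from it by $\frac{d\rho(\eta)}{\rho}\xi$. Since $\nabla\si=0$, its curvature is just the curvature $R$ of $\nabla$, and its Ricci contraction is $n\Rho$, with no derivative terms. With $\nabla^\rho$ in its place you would be looking at $\Ric^\rho$, whose tangential boundary values are $n$ times the boundary Schouten tensor, which is generally nonzero. The argument would then fail.

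\emph{Second, the hypothesis controls the wrong trace.} The normalization condition concerns the trace over the $n$-dimensional $T\partial M$ of the $T\partial M(-1)$-component of the boundary curvature. That is the boundary value of $\sum_i\al_i(R(\xi_i,\ze)\eta)$ for a frame $\{\nu,\xi_i\}$ with $d\rho(\nu)=1$, $d\rho(\xi_i)=0$. Your hypothesis instead controls the full trace $\Ric(\ze,\eta)$ over $T\barm$. The two differ by $d\rho(R(\nu,\ze)\eta)$, and the missing observation is that this term is $\Cal O(\rho)$. By Theorem \ref{thm2.7} the connection on $T^b\barm(-1)$ is defined in all directions, including the transversal $\nu$. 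Hence $R(\nu,\ze)$ is a smooth endomorphism of $T^b\barm(-1)$ and $R(\nu,\ze)\eta\in\Ga(T^b\barm(-1))$. With this, for $\ze\in\frak X_\partial(\barm)$ and $\eta\in\Ga(T^b\barm(-1))$, the tangential trace is the boundary value of $\Ric(\ze,\eta)$. That vanishes by hypothesis, and the trace-free condition then forces the bottom-left entry to be minus the Schouten tensor of the boundary connection.

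Three smaller points. First, the trace criterion only pins down the bottom-left entry once the other entries are known to be exactly $D_\xi\eta+\si\xi$ and $D_\xi\si$. A one-form added to the $\Cal E(-1)$-diagonal entry would not affect the trace condition. That this entry is $D_\xi\si$ is not automatic from the splitting. It requires $d\rho(\nabla^\rho_{\tilde\xi}\nu)=\Cal O(\rho)$ for tangential $\tilde\xi$, which is strong total geodesy via \eqref{fund-form2}, exactly as in the paper's computation of the second component. Second, identifying $\Cal E(1)|_{\partial M}$ with the intrinsic density bundle of $\partial M$ needs the defining density $\si$ from Proposition \ref{prop2.3}, by wedging with $\tilde\nabla\si|_{\partial M}$. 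Third, strong total geodesy comes from Corollary \ref{cor2.6}, using only the extension part of the hypothesis, and the boundary projective structure comes from Proposition \ref{prop2.5}. The vanishing part of the hypothesis is precisely what the normality argument uses.

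Once repaired, your route replaces the paper's Gauss-type computation with a short trace argument. It also makes clear why vanishing of $\Rho$ on tangential entries is what yields normality. The paper's explicit route has the advantage of producing the identity between the tangential boundary value of $\Rho^\rho$ and the boundary Schouten tensor, which is reused in the proof of Theorem \ref{thm3.3}.
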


In Section 3, we study projective pre-compactness of Levi-Civita connections of
pseudo-Riemannian metrics. We start by considering metrics with an asymptotic form
\begin{equation*}
  g=\frac{h}{\rho^2}+\frac{C}{\rho^4}d\rho^2, \tag{3.2}
\end{equation*}
where $C=\pm 1+\rho f$ for $f\in C^\infty(\barm,\Bbb R)$ and $h\in\Ga(S^2T^*\barm)$
is non-degenerate along the boundary on $T\partial M\x T\partial M$. Following ideas
in \cite{Proj-comp}, we prove in Theorem \ref{thm3.1} that $g$ (i.e.\ its Levi-Civita
connection) is projectively pre-compact and it is projectively compact if
$f|_{\partial M}\equiv 0$.

Conversely, we next assume that $g$ is a projectively pre-compact
pseudo-Rieman\-nian metric, which is asymptotically tangentially Ricci
flat in the sense that $\Ric(\xi,\eta)$ admits a smooth extension to
the boundary if one of its entries lies in $\frak X_{\partial}(\barm)$
and vanishes along the boundary if both of its entries have this
property. Under this assumption, we show that there
is an induced projective structure on the boundary, and we  prove
in Theorem \ref{thm3.2} that, similarly as in the projectively compact
case (but even in that case under weaker assumptions), $g$ gives rise
to a reduction of projective holonomy of the boundary structure to an
 indefinite  orthogonal group. By general results on
such reductions, this decomposes $\partial M=\partial M_+\sqcup
\partial M_0\sqcup \partial M_-$ into so-called curved orbits, where
$\partial M_{\pm}$ are open and $\partial M_0$ (if non-empty) is an
embedded closed hypersurface. This is the decomposition of the
boundary into time-like, light-like and space-like infinity. It is
easy to see that an asymptotic form \eqref{g-asymp} can only be
available locally around points in $\partial M_{\pm}$ and for special
defining functions. Our second main result in Theorem \ref{thm3.3}
provides an asymptotic form that is available everywhere together with
strong additional information on the coefficients in the asymptotic
form. In this result, $\nabla^\rho$ denotes the linear connection on
$T\partial M$ induced by a specific projective modification of
$\nabla$ determined by the defining function $\rho$ and $(\la\odot
d\rho)(\xi,\eta):=\frac12(\la(\xi)d\rho(\eta)+\la(\eta)d\rho(\xi) )$.

\begin{thm*}
  For $\barm=M\cup\partial M$, let $g$ be a pseudo-Riemannian metric of
  signature $(p,q)$ on $M$ which is
projectively pre-compact and asymptotically tangentially Ricci flat, and let $\rho$ be
any defining function for $\partial M$. 

(1) There are a smooth function $C:\barm\to\Bbb R$, a one-form $\la\in\Om^1(\barm)$
and a section $h\in\Ga(S^2T^*\barm)$ such that, over $M$, we have
\begin{equation*}
g=\frac{Cd\rho^2}{\rho^4}+\frac{\la\odot d\rho}{\rho^3}+\frac{h}{\rho^2}.\tag{3.6} 
\end{equation*}
(2) Given any asymptotic form as in \eqref{g-asymp2}, let $\underline{C}$ be the
boundary value of $C$, and let $\underline{\la}\in\Om^1(\partial M)$ and
$\underline{h}\in\Ga(S^2T^*\partial M)$ the restrictions of the boundary values of
$\la$ and $h$ to tangential entries. Then $\underline{C}$ is a defining function for
$\partial M_0$ and the symmetric bilinear form on the bundle $T\partial M\oplus\Bbb R$, over $\partial M$,
defined by
\begin{equation*}
((\xi,a),(\eta,b)) \mapsto
  \underline{C}ab-\tfrac12a\underline{\la}(\eta)-
            \tfrac12 b\underline{\la}(\xi)+\underline{h}(\xi,\eta)  
\end{equation*}
is non degenerate of signature $(p,q)$. Moreover, denoting by $\nabla^{\rho}$ the
induced connection on $T\partial M$ and by $\Rho^\rho$ its Schouten tensor, we get
for $\xi,\eta,\zeta\in\frak X(\partial M)$
\begin{gather*}
  \underline{\la}= - dC \qquad
\underline{h}(\xi,\eta)=-\tfrac12(\nabla^\rho_\xi\underline{\la})(\eta)+\underline{C}\Rho^\rho(\xi,\eta)\\
(\nabla^\rho_{\xi}\underline{h})(\eta,\zeta)=\tfrac12\big(
\Rho^\rho(\xi,\eta)\underline{\la}(\zeta)+
\Rho^\rho(\xi,\zeta)\underline{\la}(\eta)\big). 
\end{gather*}
\end{thm*}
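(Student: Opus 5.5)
The plan is to reduce everything to the extended connection on the weighted b-tangent bundle and to the holonomy reduction of Theorem \ref{thm3.2}. First I would reformulate \eqref{g-asymp2}. Take the frame of $T^b\barm$ near the boundary consisting of $\rho\partial_\rho$ and vector fields tangent to the level sets of $\rho$. A direct evaluation shows that $\rho^4g(\partial_\rho,\partial_\rho)$, $\rho^3g(\partial_\rho,\xi)$ and $\rho^2g(\xi,\eta)$, for tangential $\xi,\eta$, all extend smoothly. This is equivalent to saying that $\rho^2g$, viewed as a bilinear form on $T^b\barm|_M$, extends smoothly to a section of $S^2(T^b\barm)^*$. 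Indeed, in the dual b-coframe $\{d\rho/\rho,\ \text{tangential forms}\}$ such an extension $G$ decomposes as
$$G=C\,(d\rho/\rho)^2+\la\odot(d\rho/\rho)+h$$
with smooth $C,\la,h$, and dividing by $\rho^2$ gives \eqref{g-asymp2}.

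So for (1) I must show that $\rho^2g$ extends to $T^b\barm$. In the proof of Theorem \ref{thm3.2}, $g$ is seen as a weighted bundle metric on $T^b\barm(-1)$ that is parallel for the extension of $\nabla$ from Theorem \ref{thm2.7} and extends smoothly and non-degenerately to the boundary. Trivializing the density bundle by the section determined by $\rho$ (the trivialization used to define $\nabla^\rho$) turns this weighted metric into exactly $\rho^2g$ on $T^b\barm$. This gives existence of the asymptotic form.

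For (2), start from an arbitrary such form and restrict $G$ to the boundary. There $T^b\barm|_{\partial M}\cong T\partial M\oplus\Bbb R$, with the $\Bbb R$-summand spanned by $\rho\partial_\rho$. By Theorem \ref{thm2.8} this is the standard tractor bundle in the splitting determined by $\rho$, and $G|_{\partial M}$ is the parallel tractor metric of the holonomy reduction. After matching the identification (a sign on the $\Bbb R$-slot, which accounts for the $-\tfrac12\underline\la$ terms), $G|_{\partial M}$ is the displayed bilinear form. It is non-degenerate of signature $(p,q)$ because $G$ is non-degenerate up to the boundary and has signature $(p,q)$ on $M$. The function $\underline C$ is the value of the tractor metric on the canonical section of the $\Bbb R$-slot, which is the canonical tractor $X$ up to weight. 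The general theory of holonomy reductions to orthogonal groups \cite{hol-red}, as used in Theorem \ref{thm3.2}, identifies $\partial M_0$ as the zero locus of this function. Non-degeneracy of its differential along the zero set follows from the equations below together with non-degeneracy of the form: at a zero of $\underline C$ with $d\underline C=0$ we would also get $\underline\la=0$, and then $(0,1)$ would lie in the kernel.

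Finally, I would write the normal tractor connection in the splitting given by $\nabla^\rho$. Up to the sign conventions fixed in Theorem \ref{thm2.8}, it reads
$$\nabla_\xi(\eta,a)=\bigl(\nabla^\rho_\xi\eta+a\xi,\ \xi\cdot a-\Rho^\rho(\xi,\eta)\bigr).$$
Then I would expand the condition that $G|_{\partial M}$ is parallel, evaluating on the pairs $((0,1),(0,1))$, $((\eta,0),(0,1))$ and $((\eta,0),(\zeta,0))$. These three components yield, in order, $\underline\la=-d\underline C$, the formula for $\underline h$, and the formula for $\nabla^\rho\underline h$.

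I expect the main obstacle to be bookkeeping rather than any conceptual step. One must check that the density trivialization by $\rho$ used for $G$ coincides with the one producing $\nabla^\rho$ and the splitting in Theorem \ref{thm2.8}. One must also fix the precise sign identifying $\rho\partial_\rho$ with the tractor slot, so that the coefficients $-\tfrac12$ and the signs in front of $\Rho^\rho$ come out exactly as stated.
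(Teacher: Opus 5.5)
Your proposal is correct, and for part (2) it takes a genuinely different route from the paper.

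For part (1) your argument is essentially the paper's. Both start from the extension of $\si^2g$ given by Theorem \ref{thm3.2}. The paper then peels off $\tilde C$, $\tilde\la$, $\tilde h$ invariantly, using a field $\nu$ and the characterization of $\Om^1(\barm)\subset\Ga((T^b\barm)^*)$ from Section \ref{2.6a}. You instead expand $\rho^2g=(\si^\rho)^{-2}\si^2g$ in a local $b$-coframe $\{d\rho/\rho,dx^i\}$.

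The differences are in part (2).
\begin{itemize}
\item \emph{Independence of the chosen form.} The paper compares an arbitrary form \eqref{g-asymp2} with the one it constructed. You observe that any such form says exactly that $\rho^2g$ is a smooth section of $S^2(T^b\barm)^*$, whose boundary value is intrinsic. This is cleaner.
\item \emph{The three identities.} The paper differentiates $C=\si^2g(\tau^\rho\rho\nu,\tau^\rho\rho\nu)$ and its analogues in the interior, using \eqref{nu-deriv} and \eqref{fund-form2}. In effect it re-derives inline the computation behind Theorem \ref{thm2.8}. You take the formula for the induced connection in the $\rho$-splitting directly from Theorem \ref{thm2.8} and read off the components of the parallelity of $G|_{\partial M}$.
\item \emph{Signs.} With the paper's convention that the $\Bbb R$-slot is the boundary value of $-\rho\nu$, your pairings $((0,1),(0,1))$, $((\eta,0),(0,1))$ and $((\eta,0),(\zeta,0))$ give exactly $d\underline C=-\underline\la$, the formula for $\underline h$, and the formula for $\nabla^\rho\underline h$. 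The outcome does not depend on that sign convention, as long as you use it consistently in both $G$ and the connection.
\item \emph{The defining function $\underline C$.} Your observation that $(0,1)$ would lie in the kernel wherever $\underline C$ and $d\underline C$ both vanish is a nice self-contained proof that $d\underline C\neq 0$ on the zero locus. The paper simply cites Corollary \ref{cor3.2} for this.
\end{itemize}

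What the paper's longer computation buys is generality in the direction of differentiation. Formulas \eqref{dC} and \eqref{nlambda} hold for arbitrary $\tilde\xi\in\frak X(\barm)$, in particular for the transversal field $\nu$. Exactly these are reused in Theorem \ref{thm3.4} to characterize projective compactness. Your boundary computation only sees derivatives tangent to $\partial M$, so it would not supply them.
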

This easily leads to a proof of existence of an asymptotic form \eqref{g-asymp}
locally around points at which $C\neq 0$, see Corollary \ref{cor3.3}.

We conclude the article by proving characterizations of projective compactness in
terms of the asymptotic forms \eqref{g-asymp} and \eqref{g-asymp2}. For
\eqref{g-asymp2}, Theorem \ref{thm3.4} provides an equivalent condition for
projective compactness of $g$ in terms of asymptotic differential equations on the
coefficients $C$ and $\la$ in \eqref{g-asymp2}. In the case of \eqref{g-asymp}, it
remains to show that for $C=\pm 1+\rho f$, projective compactness of $g$ implies that
$f|_{\partial M}\equiv 0$. This is done in Theorem \ref{thm3.4.2} under the slightly
stronger curvature assumption that the  Ricci curvature of $g$ admits a smooth
extension to the boundary and its boundary value vanishes if one of its entries is
tangent to the boundary. The proof shows that this curvature assumption, which seems
to be always satisfied in GR applications, is necessary.

\section{Projective pre-compactness for connections}\label{2}
In this Section, we define and study the concept of projective pre-compactness in the
general setting of torsion-free connections. The main focus is on the existence of an
induced projective structure on the boundary and an explicit description of the
projective standard tractor bundle determined by this structure. 

\subsection{Setup and background}\label{2.1}
We consider a smooth manifold $\barm$ with boundary $\partial M$ and interior $M$. A
local defining function for $\partial M$ is then a smooth, real valued function
$\rho$ defined on an open subset $U\subset\barm$ such that
$\rho^{-1}(\{0\})=U\cap\partial M$ and such that $d\rho|_{U\cap\partial M}$ is
nowhere vanishing. A key feature of this concept is that a smooth function
$f:U\to\Bbb R$ vanishes on $U\cap\partial M$ if and only if it can be written as
$f=\rho f_1$ for some smooth function $f_1:U\to\Bbb R$. In this case we say that
$f=\Cal O(\rho)$. Similarly, for $k\geq 2$, we say that $f$ vanishes to $k$th order
along $\partial M$ or that $f=\Cal O(\rho^k)$ if and only if $f=\rho^k f_1$ for a
smooth function $f_1:U\to\Bbb R$.

For a function $f:U\cap M\to\Bbb R$, we will say that $f=\Cal
O(\rho^{-1})$, if the smooth function $\rho f:U\cap M\to\Bbb R$ admits
a (necessarily unique) smooth extension to all of $U$. Similarly for
$k\geq 2$, we say that $f=\Cal O(\rho^{-k})$ if $\rho^k f$ admits a
smooth extension. We will sometimes paraphrase this as ``$\rho^k f$ is
smooth up to the boundary''.

If $\rho:U\to\Bbb R$ is a defining function than any other defining
function on $U$ can be written as $\hat{\rho}=e^\al \rho$ for some
smooth function $\al:U\to\Bbb R$. This readily implies that all the
concepts introduced so far are independent of the choice of defining
function. Therefore, we will often not specify the domain of
definition of a defining function and say that $f=\Cal O(\rho^\ell)$
for $\ell\neq 0$ without specifying $\rho$. We will also extend this
terminology to other geometric objects like tensor fields and
differential forms.

If $f=\Cal O(\rho^{-k})$, then $\rho^k f$ has a well defined boundary
value, which is a smooth function $U\cap\partial M\to\Bbb R$. This
boundary value \textit{does} depend on the choice of defining
function, moving from $\rho$ to $\hat{\rho}=e^\al\rho$ it gets
rescaled by $e^{k\al}$. So here one has to be careful about the
dependence on the defining function.

\medskip

\begin{definition}
  The sheaf of (local) vector fields on $M$ will be denoted by $\frak
X$. This has a canonical sub-sheaf, that we denote by $\frak
X_{\partial}$, with $\frak X_{\partial}(U)$ consisting of  those
$\xi\in\frak X(U)$ which are tangent to $\partial M$ along
$U\cap\partial M$. So explicitly, we must have $\xi(x)\in T_x\partial
M\subset T_xM$ for any $x\in U\cap\partial M$. For a defining function
$\rho:U\to\Bbb R$, this may be equivalently characterized as
$d\rho(\xi)=\Cal O(\rho)$. Observe that $\frak X_{\partial}(U)$
 is a Lie subalgebra of $\frak X(U)$. Indeed for
$\xi,\eta\in\frak X_{\partial}(U)$, the fact that $0=dd\rho(\xi,\eta)$
immediately implies that $d\rho([\xi,\eta])=\xi\cdot
d\rho(\eta)-\eta\cdot d\rho(\xi)$. By assumption $d\rho(\eta)=\rho f$
for a smooth function $f:U\to\Bbb R$, so $\xi\cdot
d\rho(\eta)=d\rho(\xi) f+\rho df(\xi)=\Cal O(\rho)$ and likewise for
the second summand.
\end{definition}

\subsection{The definition of projective pre-compactness}\label{2.2}
Let $N$ be a smooth manifold and let $\nabla$ and $\hat\nabla$ be linear connections
on the tangent bundle $TN$. Recall that $\nabla$ and $\hat\nabla$ are said to be
\textit{projectively equivalent} if they have the same geodesics up to
parametrization. Since one can remove the torsion of a linear connection on $TN$
without changing the geodesics, one usually works in the setting of torsion free
connections. Then, the condition can be equivalently phrased as existence of a
one-form $\Upsilon\in\Om^1(N)$ such that
\begin{equation}\label{proj-equiv}
\hat\nabla_\xi \eta=\nabla_\xi \eta+\Upsilon(\xi)\eta+\Upsilon(\eta)\xi. 
\end{equation}
We will sometimes write this formally as $\hat\nabla=\nabla+\Upsilon$.

The concept of projective compactness was introduced in \cite{Proj-comp} as an analog
of conformal compactness of (pseudo-)Riemannian metrics. In the setting
$\barm=M\cup\partial M$ from Section \ref{2.1}, conformal compactness is a condition
on metrics $g$ on $M$ requiring that for one or equivalently any local defining
function $\rho$ for $\partial M$ the metric $\rho^2 g$ admits a smooth extension to
the boundary with non-degenerate boundary values.

Projective compactness of a linear connection $\nabla$ on $TM$ is
defined in a similar spirit by requiring that for certain one-forms
$\Upsilon$ (that are not smooth up to the
boundary ) the projective modifications
$\nabla+\Upsilon$ admit a smooth extension to the boundary. The
appropriate one-forms turn to be $\frac{d\rho}{\be\rho}$ for local
defining functions $\rho$ for $\partial M$ and a constant $\be\in
(0,2]$ which is called the \textit{order} of projective
  compactness. (Observe that $\be$ cannot be absorbed into a constant
  rescaling of $\rho$ since we are dealing with a logarithmic
  derivative here.) In general, the cases $\be=1$ and $\be=2$ are the
  most interesting ones. While the basic idea of our extension could
  also be applied for other values of $\be$ we restrict to the case
  $\be=1$ in this article.

As in \cite{Proj-comp}, we will mainly study the case of {\em special}
linear connections here, i.e.\ connections that preserve a volume
density. It is straightforward to see that any projective class admits
such connections, and in the following when we mention a connection
$\nabla$ it may be assumed that is is special, unless we state
otherwise.  So for a local defining function $\rho:U\to\Bbb R$ for
$\partial M$, we consider the one-form
$\frac{d\rho}{\rho}\in\Om^1(U\cap M)$ and for a special linear
connection $\nabla$ on $TM$, we consider the projectively modified
linear connection $\nabla^\rho:=\nabla+\frac{d\rho}{\rho}$ on $U\cap
M$. Observe that for another defining function $\hat\rho=e^\al \rho$
we get $d\hat\rho=\hat\rho d\al+e^\al d\rho$ and hence
$\frac{d\hat\rho}{\hat\rho}=\frac{d\rho}{\rho}+d\al$. Hence the
connections $\nabla^\rho$ and $\nabla^{\hat\rho}$ are projectively
related by the one-form $d\al$ that is smooth up to the boundary. This
shows that if the conditions imposed in the following definition hold
for one defining function on $U$ then they hold for any defining
function. So, loosely speaking, they are independent of the choice of
defining function.

\begin{definition}\label{def2.2}
 In our usual setting $\barm=M\cup\partial M$, consider a special
 linear connection $\nabla$ on $TM$ and the associated modified
 connection $\nabla^\rho=\nabla+\frac{d\rho}{\rho}$ determined by any
 local defining function $\rho:U\to\Bbb R$.  \\ (1) We say that
 $\nabla$ is \textit{projectively pre-compact} (of order $1$) if and
 only if the following conditions are satisfied for each
   $U$ and $\rho$:
  \begin{itemize}
  \item[(i)] $d\rho(\nabla^\rho_\xi \eta)$ is smooth up to the
    boundary for all $\xi,\eta\in\frak X(U)$.
  \item[(ii)] $\rho\nabla^\rho_\xi\eta$ is smooth up to the boundary
    for all $\xi,\eta\in\frak X(U)$.
  \item[(iii)] $\nabla^\rho_\xi\eta$ is smooth up to the boundary if
    at least one of the vector fields lies in $\frak
    X_{\partial}(U)\subset\frak X(U)$.
  \end{itemize}

\noindent (2) We say that $\nabla$ is \textit{projectively compact}
(of order $1$) if $\nabla^\rho_\xi\eta$ is smooth up to the boundary
for all $\xi,\eta\in\frak X(U)$.

\smallskip

 A pseudo-Riemannian metric $g$ on $M$ is called \textit{projectively
   pre-compact}, or, respectively, \textit{projectively compact}, (of
 order $1$) if and only if its Levi-Civita connection is.
\end{definition}

Thus projective pre-compactness should be interpreted as the fact that
$\nabla^\rho$ ``almost'' admits a smooth extension to the
boundary. Observe in particular, that we can restrict any linear
connection to operations $\frak X\x\frak X_{\partial}\to\frak X$ and
$\frak X_{\partial}\x \frak X\to\frak X$ and projective modification
makes sense in this setting. Condition (iii) then exactly says that
the projective modifications of these ``restricted connections'' are
smooth up to the boundary.

\subsection{The canonical defining density}\label{2.3}
A major tool for the study of projectively compact linear connections
is projective tractor calculus. In the setting of projective
compactness, this is automatically available, since for a projectively
compact linear connection $\nabla$, the projective structure on $M$
defined by $\nabla$ by definition admits a smooth extension to all of
$\barm$. Hence all tools from projective differential geometry are
automatically available on all of $\barm$ and several results on
smooth extendability follows straight from projective invariance.

Things are not as straightforward in the projectively pre-compact case
since one cannot expect the projective structure to extend to $\barm$.
For example, Theorem 3.3 of \cite{Proj-comp} analyzes the case of a
Ricci-flat linear connection $\nabla$ on $TM$ which preserves some
volume form on $M$. It is shown there that if the projective structure
determined by $\nabla$ extends smoothly to $\barm$, but $\nabla$
itself does not extend smoothly to a neighborhood of any boundary
point, then $\nabla$ has to be projectively compact of order $1$.

The advantage we have here (compared with say conformal geometry) is
that the bundles involved in projective tractor calculus do not need a
projective structure to be defined. This is most easily seen by
starting from the standard cotractor bundle $\Cal T^*$, which can be
defined as the first jet prolongation of a certain density bundle
$\Cal E(1)$ (that exists on any smooth manifold), see \cite{BEG} and
section 2.2 of \cite{Proj-comp} for the definitions of $\Cal E(w)$ for
$w\in\Bbb R$.

If $\nabla$ is a special linear connection, then any of the density
bundles $\Cal E(w)$ admits a non-zero section that is parallel for
(the connection induced by) $\nabla$ and hence nowhere vanishing on
$M$ and this section is unique up to a constant multiple.  The first
step in \cite{Proj-comp} is then to prove that if $\nabla$ is
projectively compact of order $1$, then any non-zero density
$\si\in\Cal E(1)$ which is parallel for $\nabla$ admits a smooth
extension by $0$ to a defining density for $\partial M$. That means
that for any connection $\hat\nabla$ in the projective class
determined by $\nabla$ and which is smooth up to the boundary,
$\hat\nabla\si$ is nowhere vanishing on $\partial M$. The jet $j^1\si$
then defines a global smooth section $L(\si)$ of $\Cal T^*=J^1\Cal
E(1)$, which is called the \textit{scale tractor} associated to
$\nabla$.

Now we can prove that these facts extend to the case where $\nabla$ is
projectively pre-compact, although the interpretation of $J^1\Cal
E(1)$ as a projective cotractor bundle is lost.

\begin{prop}\label{prop2.3}
  In our usual setting $\barm=M\cup\partial M$ let $\nabla$ be a
  special projectively pre-compact connection on $TM$
  and let $\si\in\Ga(\Cal E(1))$ be a nonzero density that is parallel
  for $\nabla$. Then $\si$ smoothly extends by $0$ to a defining
  density for $\partial M$. In particular, it determines a global
  smooth nowhere vanishing section $I:=j^1\si\in\Ga(J^1\Cal E(1))$ on
  $\barm$.
\end{prop}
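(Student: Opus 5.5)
Locally, near a boundary point, choose a defining function $\rho:U\to\Bbb R$ and a local density section to trivialize $\Cal E(1)$. A natural choice is to fix a connection $D$ on $T\barm$ that is smooth up to the boundary and special. Concretely, take $D$ to be the flat connection of a chart with coordinates $(\rho,y^1,\dots,y^n)$, which preserves the coordinate volume density. Let $\tau\in\Ga(\Cal E(1))$ be the $D$-parallel nowhere vanishing density on $U$. Then $\si=f\tau$ for a smooth positive (up to sign) function $f$ on $U\cap M$. The claim amounts to showing that $f=\rho\cdot\phi$ with $\phi$ smooth up to the boundary and nowhere zero on $U\cap\partial M$. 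This is exactly the statement that $\si$ extends by zero to a defining density, since then $D\si|_{\partial M}=\phi\, d\rho\otimes\tau$ is nonvanishing.

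The key computation is the comparison of $\nabla$ with $D$ on densities. Write $\nabla^\rho=\nabla+\frac{d\rho}{\rho}$ and $\nabla^\rho=D+A$, where $A$ is a $(1,2)$-tensor field on $U\cap M$. On $\Cal E(w)$, a projective change by $\Upsilon$ shifts the induced connection by $w\Upsilon$; I will check the normalization against the paper's conventions, where for $\Cal E(1)$ with $\hat\nabla=\nabla+\Upsilon$ one has $\hat\nabla\si=\nabla\si+\Upsilon\si$. Likewise, the difference of two connections on $TM$ acts on $\Cal E(1)$ through a multiple of the trace, $c\,\mathrm{tr}(A)$ with $c=-1/(n+2)$ or similar. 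Since $\nabla\si=0$, I get
\[
0=\nabla\si=\nabla^\rho\si-\tfrac{d\rho}{\rho}\si
=\big(df+c\,\mathrm{tr}(A)f-\tfrac{d\rho}{\rho}f\big)\otimes\tau .
\]
Hence $d\log|f|=d\log|\rho|-c\,\mathrm{tr}(A)$, so $f=\rho\,e^{\psi}$ up to a constant, where $d\psi=-c\,\mathrm{tr}(A)$ on $U\cap M$.

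The main step is to show that the one-form $\mathrm{tr}(A)$, i.e.\ $\xi\mapsto\mathrm{tr}(\eta\mapsto A(\xi,\eta))$, is smooth up to the boundary. Then $\psi$, being a primitive of a closed smooth form on a contractible neighbourhood such as a coordinate half-ball, extends smoothly, and $\phi=e^\psi$ is smooth and nonvanishing. To compute the trace in the frame $\partial_\rho,\partial_{y^i}$, I need the contributions $d\rho(A(\xi,\partial_\rho))$ and $dy^i(A(\xi,\partial_{y^i}))$. The coefficients $dy^i(A(\xi,\partial_{y^i}))$ are smooth by condition (iii), since $\partial_{y^i}\in\frak X_\partial$. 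The coefficient $d\rho(A(\xi,\partial_\rho))$ is smooth by condition (i). Hence $\mathrm{tr}(A)(\xi)$ is smooth for every $\xi$. This is the point where the precise form of conditions (i) and (iii) in Definition \ref{def2.2} is exactly what is needed, and I expect the only delicacy to be getting the weight/trace constant on densities right, which does not affect the argument.

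Finally, since $f=\rho\phi$ near each boundary point, $\si$ extends by zero to $\barm$ as a smooth section vanishing exactly on $\partial M$, with nonvanishing derivative there, so it is a defining density. The independence of choices follows from uniqueness of the extension. The jet $I=j^1\si$ is then a global smooth section of $J^1\Cal E(1)$. It is nowhere zero: on $M$ since $\si\neq0$, and on $\partial M$ since its derivative part is nonzero there.
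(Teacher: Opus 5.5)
Your argument is correct, but it takes a different route from the paper.

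\textbf{The paper's route.} It uses condition (ii) to show that the boundary value of $\rho\nabla^\rho_\xi\eta$ has the form $d\rho(\xi)d\rho(\eta)\psi$ with $\psi$ tangent to $\partial M$. It then subtracts the trace-free symmetric tensor $\frac1\rho\, d\rho\otimes d\rho\otimes\tilde\psi$. This produces a torsion-free connection $\tilde\nabla$ that still preserves $\si$ and is projectively compact of order $1$. The paper then simply quotes Proposition 2.3 of \cite{Proj-comp}.

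\textbf{Your route.} You compare $\nabla^\rho$ directly with a smooth flat chart connection. You observe that only the trace of the difference tensor enters the induced connection on $\Cal E(1)$. That trace involves only $d\rho(\nabla^\rho_\xi\partial_\rho)$ and $dy^i(\nabla^\rho_\xi\partial_{y^i})$, which are smooth up to the boundary by (i) and (iii) respectively. In effect you prove Lemma~\ref{lem2.4} first (the connection $\nabla^\rho$ on $\Cal E(1)$ extends smoothly) and deduce the proposition from it, reversing the paper's logical order. The Poincar\'e-lemma step on a half-ball, with $U\cap M$ connected, is fine.

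\textbf{Comparison.} Your approach is self-contained and avoids the appeal to \cite{Proj-comp}. It also shows that condition (ii) is not needed for this particular statement. The paper's approach has two advantages. It isolates the obstruction field $\psi$, which is reused immediately afterwards to define the canonical obstruction $\mu\in\Ga(T\partial M(-1))$ to projective compactness. It also exhibits a projectively compact connection preserving the same $\si$, tying the pre-compact case directly to the earlier theory.

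\textbf{A minor remark.} In the paper's conventions the trace constant is $c=\frac{1}{n+2}$. This is because the difference tensor of a projective change by $\Upsilon$ has trace $(n+2)\Upsilon$ and acts on $\Cal E(1)$ by $\Upsilon$. As you say, only the fact that $\frac{d\rho}{\rho}$ enters with coefficient exactly $1$ matters for the argument.
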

\begin{proof} 
  Our strategy is to construct, locally near $\partial M$, a
  modification $\tilde\nabla$ of $\nabla$, which is projectively
  compact of order $1$ and such that $\tilde\nabla\si=0$. Then the
  claim follows from applying Proposition 2.3 of \cite{Proj-comp} to
  $\tilde\nabla$. So we fix an open subset $U$ that meets the boundary
  and a defining function $\rho:U\to \mathbb{R}$ for $\partial M$ and
  consider the projective modification
  $\nabla^{\rho}=\nabla+\frac{d\rho}{\rho}$ of $\nabla$. Choosing
  arbitrary vector fields $\xi,\eta\in\frak X(U)$, we know by
  definition that $\rho\nabla^{\rho}_\xi\eta$ admits a smooth
  extension of the boundary. Moreover, since
  $d\rho(\nabla^{\rho}_\xi\eta)$ itself is smooth up to the boundary,
  the boundary value of $\rho\nabla^{\rho}_\xi\eta$ is a smooth
  section of $T\partial M\subset T\barm |_{\partial M}$. Now the
  expression $\rho\nabla^{\rho}_\xi\eta$ evidently is linear over
  smooth functions in $\xi$ and one immediately verifies that it is
  linear over smooth functions in $\eta$ up to $\Cal
  O(\rho)$. Moreover, by definition $\rho\nabla^{\rho}_\xi\eta$ itself
  is $\Cal O(\rho)$ if either $\xi$ or $\eta$ lies in $\frak
  X_{\partial}(U)\subset\frak X(U)$ and hence in particular if one of
  them is $\Cal O(\rho)$.

  Together, these facts imply that
  $(\xi,\eta)\mapsto\rho\nabla^{\rho}_\xi\eta |_{\partial M}$ is
  induced by applying a bundle map $T\barm |_{\partial M}\x
  T\barm|_{\partial M}\to T\partial M$ to the boundary values of $\xi$
  and $\eta$ and we know in addition that this bundle map vanishes if
  one of its entries lies in $T\partial M\subset T \barm|_{\partial
    M}$. Since the quotient bundle $T \barm|_{\partial M}/T\partial M$
  is trivialized by $d\rho$, we conclude that there is a smooth
  section $\psi$ of $T\partial M$, such that the boundary value of
  $\rho\nabla^{\rho}_\xi\eta$ equals $d\rho(\xi)|_{\partial M}
  d\rho(\eta)|_{\partial M}\psi$.

  Now we can extend $\psi$ to a vector field $\tilde\ps\in\frak X(U)$
  such that $d\rho(\tilde\psi)\equiv 0$. Then $A:=d\rho\otimes
  d\rho\otimes\tilde\psi$ is a smooth section of $(S^2T^*\barm \otimes
  T\barm)_0$, where the subscript indicates the trace-free part. Of
  course $\tilde\nabla_\xi\eta=\nabla_\xi\eta-\frac1{\rho}A(\xi,\eta)$
  is a well-defined linear connection on $U\cap M$, which is
  torsion-free since $A$ is symmetric. Moreover, since $A$ is
  trace-free, $\si$ is parallel for the connection induced by
  $\tilde\nabla$. Passing to the projective modifications of the two
  connections determined by $\rho$, we of course get
  $\tilde\nabla^{\rho}_\xi\eta=\nabla^\rho_\xi\eta-\frac1{\rho}A(\xi,\eta)$. By
  construction $\rho\tilde\nabla^{\rho}_\xi\eta$ admits a smooth
  extension to the boundary for any $\xi,\eta\in\frak X(\barm)$ and
  again by construction this has vanishing boundary value. Thus
  $\tilde\nabla^{\rho}_\xi\eta$ admits a smooth extension to the
  boundary, which by definition implies that $\tilde\nabla$ is
  projectively compact of order one.
\end{proof}

Having the canonical defining density $\si$ at hand, we can actually
convert the vector field $\psi$ obtained in the proof into a canonical
weighted object, which expresses the obstruction to $\nabla$ being
projectively compact. Knowing that $\si$ is a defining density for
$\partial M$, we conclude that for a linear connection $\hat\nabla$ on
$\Cal E(1)$, the derivative $\hat\nabla\si$ is independent of the
choice of $\hat\nabla$ along the zero locus $\partial M$ of
$\si$. Since $\si$ is a defining density, we also conclude that for
any defining function $\rho$ for the boundary, $\frac{\si}{\rho}$
admits a smooth extension to the boundary with nowhere vanishing
boundary value. This readily implies that $\si\nabla^\rho_\xi\eta$
admits a smooth extension to the boundary for any $\xi,\eta\in\frak
X(\barm)$. Then the considerations in the proof above can be rephrased
as the fact that there exists a section $\mu$ of $T\partial
M(-1):=T\partial M\otimes\Cal E(-1)$ such that the boundary value of
$\si\nabla^\rho_\xi\eta$ equals $(\hat\nabla_\xi\si)|_{\partial
  M}(\hat\nabla_\eta\si)|_{\partial M}\mu$ for any $\xi,\eta$. The
only dependence on
a choice here is via the use of $\nabla^\rho$. But as observed above for any other
defining function $\hat\rho$, the difference
$\nabla^\rho_\xi\eta-\nabla^{\hat\rho}_\xi\eta$ admits a smooth extension to the
boundary, so $\mu\in\Ga(T\partial M(-1))$ is canonically associated to $\nabla$. 

\subsection{Boundary structures}\label{2.4}
Let $\nabla$ be a special linear connection on $M$ and let
$\si\in\Ga(\Cal E(1)|_M)$ be a nonzero density which is parallel for
$\nabla$. In the case that $\nabla$ is projectively compact of order
$1$, it was shown in Proposition 3.1 of \cite{Proj-comp} that
$\si\Rho_{ab}$ is smooth up to the boundary and defines a second
fundamental form along the boundary. Here $\Rho_{ab}$ is the
projective Schouten tensor which for a special connection is just a
constant multiple of the Ricci curvature. In particular, this implies
that the boundary is totally geodesic provided that $\nabla$ has the
property that its Ricci curvature admits a smooth extension to the
boundary. In this case, one then obtains an induced projective
structure on the boundary, which, in the case of Ricci-flat
projectively compact metrics, is studied in detail in
\cite{Proj-comp}.

Thus we see that additional conditions will be necessary to obtain a
boundary geometry induced by a connection $\nabla$ on $TM$ which is
only projectively pre-compact. Surprisingly, not much is lost compared
to the projectively compact case. We first discuss the connections on
densities induced by the connections $\nabla^\rho$. Recall that on a
smooth manifold $N$ of dimension $n+1$, one can define the bundle
$\vol(N)$ of volume densities as used in Section 2.2 of
\cite{Proj-comp} as the square root of the trivial line bundle
$(\La^{n+1}T^*M)\otimes(\La^{n+1}T^*M)$. The density bundle $\Cal
E(1)$ which is relevant for our purposes is then defined as the
$(n+2)^{nd}$ root of $\vol(N)$. In particular, in our setting of
$\barm=M\cup \partial M$, any linear connection on $TM$ induces a
linear connection on $\Cal E(1)|_M$, and it is natural to ask whether
such a connection is smooth up to the boundary.

\begin{lemma}\label{lem2.4}
  In our usual setting of $\barm=M\cup\partial M$, let $\nabla$ be a special linear
  connection on $TM$, which is projectively pre-compact. Let $\rho:U\to\Bbb R$ be a
  local defining function for $\partial M$ and $\nabla^\rho$ the corresponding modified
  connection. Then the induced connection $\nabla^\rho$ on $\Cal E(1)|_{U\cap M}$
  smoothly extends to all of $U$.
\end{lemma}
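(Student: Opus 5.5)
The plan is to compare the density connection induced by $\nabla^\rho$ with a connection on $\Cal E(1)$ that is obviously smooth on all of $U$. The difference will be an explicit one-form, and I will show that this one-form is smooth up to the boundary.

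First, I record how projective modification acts on densities. For a projective change $\hat\nabla=\nabla+\Upsilon$ as in \eqref{proj-equiv}, the induced connections on $\vol(N)$ differ by $-(n+2)\Upsilon$, since the trace of $\eta\mapsto\Upsilon(\xi)\eta+\Upsilon(\eta)\xi$ is $(n+2)\Upsilon(\xi)$. Taking the $(n+2)$nd root, the connections on $\Cal E(1)$ satisfy $\hat\nabla_\xi\tau=\nabla_\xi\tau-\Upsilon(\xi)\tau$. This is the standard convention in which $\Cal E(1)$ has projective weight one.

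Next, I use the canonical defining density from Proposition \ref{prop2.3}. Since $\nabla$ is special, there is a nonzero $\si\in\Ga(\Cal E(1)|_M)$ with $\nabla\si=0$, and by Proposition \ref{prop2.3} it extends smoothly by $0$ to a defining density for $\partial M$. On $U$ I can therefore write $\si=\rho\,\tau_0$, where $\tau_0$ is a smooth section of $\Cal E(1)$ over $U$ that vanishes nowhere. Any section of $\Cal E(1)$ over $U\cap M$ can then be written as $f\tau_0$. Using $\nabla\si=0$ and $\Upsilon=\frac{d\rho}{\rho}$, I compute
\[
\nabla^\rho\si=-\tfrac{d\rho}{\rho}\,\si=-d\rho\,\tau_0 .
\]
On the other hand, $\nabla^\rho\si=\nabla^\rho(\rho\tau_0)=d\rho\otimes\tau_0+\rho\nabla^\rho\tau_0$. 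Comparing the two expressions gives
\[
\nabla^\rho\tau_0=-\tfrac{2}{\rho}\,d\rho\otimes\tau_0 .
\]
This is not smooth, which means the naive guess fails. So the sign convention must be rechecked. I expect that the correct weight convention (the one used in \cite{Proj-comp}, where $\Cal E(1)$ is the $(n+2)$nd root of the dual of $\vol$, or where the sign is reversed) gives $\hat\nabla_\xi\tau=\nabla_\xi\tau+\Upsilon(\xi)\tau$. With that convention the computation becomes $\nabla^\rho\si=d\rho\,\tau_0$, and hence
\[
\rho\nabla^\rho\tau_0=0,\qquad\text{so}\qquad \nabla^\rho\tau_0=0 \ \text{on } U\cap M .
\]
Then for a general section, $\nabla^\rho(f\tau_0)=df\otimes\tau_0$, which is manifestly smooth up to the boundary. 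This gives the claimed extension. In fact the extension is the flat connection for which $\tau_0=\si/\rho$ is parallel.

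The main obstacle is therefore the bookkeeping of the weight convention for $\Cal E(1)$. I need to check that the convention in which a parallel section of $\Cal E(1)$ extends to a defining density, which is the content of Proposition \ref{prop2.3} via \cite{Proj-comp}, is the same convention in which $\nabla+\Upsilon$ acts on $\Cal E(1)$ by $+\Upsilon$. I would confirm this with the flat model: take the flat connection on a half-space with $\rho=1/r$. There the parallel density corresponds, via central projection, to a defining density. This fixes the sign, and with that sign the computation above goes through.
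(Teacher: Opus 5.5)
Once you adopt the convention $\nabla^\rho_\xi\tau=\nabla_\xi\tau+\frac{d\rho(\xi)}{\rho}\tau$ on $\Cal E(1)$, your argument is exactly the paper's proof: write $\si=\rho\si^\rho$, deduce $\rho\nabla^\rho\si^\rho=0$, hence $\nabla^\rho\si^\rho=0$, and conclude that $\nabla^\rho(f\si^\rho)=df\otimes\si^\rho$ is smooth up to the boundary. That $+$ sign is the convention the paper uses in its own computation, and it is the standard one from \cite{BEG}: it matches the characterization $\Cal E(1)^{\otimes(2n+2)}\cong(\Lambda^nTN)^{\otimes 2}$ in Section \ref{2.8}, so $\Cal E(1)$ is a root of the \emph{dual} of the volume bundle even though the wording in Section \ref{2.4} is looser. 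The detour through the opposite sign and the proposed check in the flat model can therefore simply be dropped.
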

\begin{proof}
  Fixing a non-zero section $\si\in\Ga(\Cal E(1)|_U)$ which is parallel for $\nabla$,
  we know from Proposition \ref{prop2.3} that $\si$ smoothly extends by $0$ to a
  defining density for $U\cap\partial M$. By definition, this means that there is a
  smooth, nowhere vanishing section $\si^\rho\in\Ga(\Cal E(1)|_U)$ such that $\si$
  coincides with $\rho\si^\rho$ on $U\cap M$. But over $U\cap M$, $\nabla^\rho$ is
  obtained from $\nabla$ via a smooth projective change, so we can use the usual
  formula for the change of the induced connection on densities there. Hence for any
  vector field $\xi\in\frak X(U\cap M)$, we get
  $$
  0=\nabla_\xi(\rho\si^\rho)=\nabla^\rho_\xi
  (\rho\si^\rho)-\frac{d\rho(\xi)}{\rho}\rho\si^\rho=\rho\nabla^\rho_\xi\si^\rho.
  $$
  Since $\rho$ is nowhere vanishing on $U\cap M$, we conclude that
  $\nabla^\rho_\xi\si^\rho=0$ on $U\cap M$.

  But now any section $\tau$ of $\Cal E(1)|_U$ can be written as $f\si^\rho$ for a
  function $f\in C^\infty(U,\Bbb R)$ and thus for $\xi\in\frak X(U)$, we get
  $\nabla^\rho_\xi \tau=df(\xi)\si^\rho$ which is smooth up to the boundary.
\end{proof}

This easily brings us to a canonical weighted second fundamental form and indeed a
bit further than that. Starting from $\nabla$, we can take the smooth section
$\si\in\Ga(\Cal E(1))$ and then form $\nabla^\rho\si$ which is a well defined smooth
section of $T^*M(1)$.

\begin{thm}\label{thm2.4}
  The section $\nabla^\rho\nabla^\rho\si$ of $\otimes^2T^*M(1)$ is symmetric and
  admits a smooth extension to the boundary. Denoting the boundary value by
  $\Ph^\rho$, $\Ph^\rho(\xi,\eta)$ is independent of $\rho$ if at least one of the
  vector fields $\xi,\eta$ lies in $\Ga(T\partial M)\subset\Ga(TM|_{\partial M})$, so we denote
  it by $\Ph$ in this case. If $\xi\in\frak X(\barm)$ and $\eta\in \frak
  X_\partial(\barm)$, then $\Phi(\xi|_{\partial M},\eta|_{\partial M})$ is the
  boundary value of
  $\frac{d\rho(\xi)d\rho(\eta)}{\rho}-d\rho(\nabla^{\rho}_\xi\eta)$.  If also $\xi\in
  \frak X_\partial (M)$, then the first summand vanishes along the boundary, so
  $\Phi|_{T\partial M\x T\partial M}$ defines a canonical weighted second fundamental
  form for $\partial M$.
\end{thm}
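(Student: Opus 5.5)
The plan is to compute $\nabla^\rho\nabla^\rho\si$ explicitly using the decomposition from the proof of Lemma \ref{lem2.4}. Locally, write $\si=\rho\si^\rho$ with $\si^\rho$ smooth and nowhere vanishing on $U$ and $\nabla^\rho\si^\rho=0$. Since $\nabla^\rho$ on $\Cal E(1)$ extends smoothly (Lemma \ref{lem2.4}), $\nabla^\rho\si=d\rho\otimes\si^\rho$ is a smooth section of $T^*\barm(1)$ on all of $U$. Differentiating once more with the induced connection on $T^*M(1)$ and using $\nabla^\rho\si^\rho=0$ gives
$$
(\nabla^\rho_\xi\nabla^\rho\si)(\eta)=\big(\xi\cdot d\rho(\eta)-d\rho(\nabla^\rho_\xi\eta)\big)\si^\rho=(\nabla^\rho_\xi d\rho)(\eta)\,\si^\rho .
$$
Symmetry then follows because $\nabla^\rho$ is torsion-free on $U\cap M$, so the antisymmetric part is $dd\rho(\xi,\eta)\si^\rho=0$. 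Smooth extendability follows from condition (i) of Definition \ref{def2.2}, since $\xi\cdot d\rho(\eta)$ is obviously smooth.

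Next I would derive the boundary formula. If $\eta\in\frak X_\partial(U)$, write $d\rho(\eta)=\rho f$. Then $\xi\cdot d\rho(\eta)=d\rho(\xi)f+\Cal O(\rho)$, and this agrees with $\frac{d\rho(\xi)d\rho(\eta)}{\rho}$ up to $\Cal O(\rho)$. Hence $\Ph^\rho(\xi,\eta)$ is the boundary value of $\frac{d\rho(\xi)d\rho(\eta)}{\rho}-d\rho(\nabla^\rho_\xi\eta)$, with the weight carried by $\si^\rho$ (equivalently, after trivializing $\Cal E(1)$ by $\si^\rho$). If in addition $\xi\in\frak X_\partial$, then $d\rho(\xi)d\rho(\eta)/\rho=\Cal O(\rho)$. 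This gives the second fundamental form statement, once independence of $\rho$ is established.

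The main step is independence of $\rho$. Take $\hat\rho=e^\al\rho$, so that $\nabla^{\hat\rho}=\nabla^\rho+\Upsilon$ with $\Upsilon=d\al$ smooth up to the boundary. I would use the standard projective change formulas on $U\cap M$, valid for any smooth one-form there:
\begin{gather*}
\hat\nabla_a\tau=\nabla_a\tau+\Upsilon_a\tau \quad \text{for } \tau\in\Cal E(1),\\
\hat\nabla_a\om_b=\nabla_a\om_b-\Upsilon_b\om_a \quad \text{for } \om\in\Ga(T^*M(1)),
\end{gather*}
where the weight term and one of the two one-form terms cancel. This gives
$$
\hat\nabla_a\hat\nabla_b\si=\nabla_a\nabla_b\si-\Upsilon_b\nabla_a\si+\Upsilon_b\nabla_a\si+\si\,\nabla_a\Upsilon_b+\Cal O(\si),
$$
where $\nabla=\nabla^\rho$, and the $\Cal O(\si)$ terms are products of smooth objects with $\si$. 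So the first-order terms cancel exactly, and the difference reduces to $\si(\nabla^\rho_\xi\Upsilon)(\eta)$ modulo terms vanishing on $\partial M$.

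The subtle point, which I expect to be the real obstacle, is that $\nabla^\rho$ does not extend smoothly. The term $\si(\nabla^\rho_\xi\Upsilon)(\eta)=\si\,\xi\cdot\Upsilon(\eta)-\Upsilon(\si\nabla^\rho_\xi\eta)$ therefore need not vanish at the boundary. By condition (iii), however, $\si\nabla^\rho_\xi\eta=\Cal O(\rho)$ whenever one of $\xi,\eta$ lies in $\frak X_\partial$. Equivalently, by the discussion after Proposition \ref{prop2.3}, its boundary value is $(\hat\nabla_\xi\si)(\hat\nabla_\eta\si)\mu$, which vanishes when one entry is tangent to $\partial M$. Hence $\Ph^{\hat\rho}(\xi,\eta)=\Ph^\rho(\xi,\eta)$ whenever $\xi$ or $\eta$ is tangent to $\partial M$. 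This also explains why independence can fail in the doubly transversal direction, with discrepancy $-\Upsilon(\mu)$ times the two derivatives of $\si$.
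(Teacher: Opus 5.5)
Your proposal is correct and follows essentially the paper's proof. In both, the formula $(\xi\cdot d\rho(\eta)-d\rho(\nabla^\rho_\xi\eta))\si^\rho$ gives symmetry and smooth extension, and a change of defining function changes the form by $-d\al(\rho\nabla^\rho_\xi\eta)\si^\rho$ modulo $\Cal O(\rho)$, which condition (iii) kills when one entry is tangent. The only cosmetic difference is that the paper gets this term by substituting $\hat\rho=e^\al\rho$ and $\si^{\hat\rho}=e^{-\al}\si^\rho$ directly into that formula, whereas you use the standard projective transformation laws for weighted densities and one-forms.
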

\begin{proof}
  Take vector fields $\xi,\eta\in\frak X(\barm)$. Over the interior, we get
  $\nabla^\rho\nabla^\rho\si(\xi,\eta)=\nabla^\rho_\xi\nabla^\rho_\eta\si-
  \nabla^\rho_{\nabla^\rho_\xi\eta}\si$. By Lemma \ref{lem2.4}, the first summand
  is smooth up to the boundary. In the proof of this Lemma, we have seen that
  $\nabla^\rho\si=d\rho\otimes\si^\rho$, so the second summand can be written as
  $d\rho(\nabla^\rho_\xi\eta)\si^\rho$, which by definition is smooth up to the
  boundary, too. This observation also shows that
  \begin{equation}\label{fund-form}
  \nabla^\rho\nabla^\rho\si(\xi,\eta)=(\xi\cdot
  d\rho(\eta)-d\rho(\nabla^\rho_{\xi}\eta))\si^\rho.
\end{equation}
In the interior, this expression is visibly symmetric since $dd\rho=0$ and since
$\nabla^\rho$ is torsion-free as a projective modification of the torsion-free
connection $\nabla$.

  For a second defining function $\hat\rho=e^f\rho$, we get
  $d\hat\rho=e^fd\rho+\hat\rho df$ and inserting this, we see that 
  \begin{equation}\label{tech-diff}
    \xi\cdot
    d\hat\rho(\eta)= e^fdf(\xi)d\rho(\eta)+e^f\xi\cdot d\rho(\eta)+
    e^fd\rho(\xi)df(\eta)+\Cal O(\rho)
  \end{equation}
  On the other hand,
  $\nabla^{\hat\rho}_\xi\eta=\nabla^\rho_\xi\eta+df(\xi)\eta+df(\eta)\xi$, and hence
  $-d\hat\rho(\nabla^{\hat\rho}_{\xi}\eta)$ can be written as
\begin{equation}\label{tech-dhatrho}
  -e^f\left(d\rho(\nabla^\rho_\xi\eta+df(\xi)\eta+
    df(\eta)\xi) + df(\rho\nabla^\rho_\xi\eta)\right)+\Cal O(\rho).   
  \end{equation}
  Finally, since $\si^\rho$ is characterized by $\rho\si^\rho=\si$, we see that
  $\si^{\hat\rho}=e^{-f}\si^\rho$, so using \eqref{fund-form}, we see that
  $$
  \nabla^{\hat\rho}\nabla^{\hat\rho}\si(\xi,\eta)=\nabla^\rho\nabla^\rho\si(\xi,\eta)-
  df(\rho\nabla^\rho_\xi\eta)\si^\rho+\Cal O(\rho). 
  $$
  If either $\xi$ or $\eta$ lies in $\frak X_\partial(\barm)$, then 
  $\rho\nabla^\rho_\xi\eta=\Cal O(\rho)$, hence in this case $\rho$ and $\hat\rho$
  lead to the same boundary value. Finally, if $\eta$ lies in $\frak
  X_\partial(\barm)$, then $\xi\cdot
  d\rho(\eta)=\tfrac{d\rho(\xi)d\rho(\eta)}{\rho}+\Cal O(\rho)$ and if also $\xi$
  lies in $\frak X_\partial(\barm)$, this itself is $\Cal O(\rho)$, which completes the
  proof.
\end{proof}


\subsection{The case of totally geodesic boundary}\label{2.5}
We next study vanishing of (parts of) the canonical bilinear form $\Phi$ from
Theorem \ref{thm2.4}. 

\begin{prop}\label{prop2.5}
For $\barm=M\cup\partial M$ let $\nabla$ be a special linear connection on $TM$ which
is projectively pre-compact and has the property that the weighted second fundamental
form $\Phi|_{T\partial M\x T\partial M}$ from Theorem \ref{thm2.4}
vanishes identically. 

Then for any local defining function $\rho$, the connection
$\nabla^\rho$ restricts to give a torsion-free linear connection on
$T\partial M\to\partial M$ and any other defining function $\hat\rho$
leads to a projectively equivalent connection. Hence the boundary
$\partial M$ inherits a projective structure from $\nabla$.
\end{prop}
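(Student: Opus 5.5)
We plan to define the boundary connection directly: for $\xi,\eta\in\frak X(\partial M)$, extend them to fields $\tilde\xi,\tilde\eta\in\frak X_\partial(U)$. By condition (iii) of Definition \ref{def2.2}, $\nabla^\rho_{\tilde\xi}\tilde\eta$ is smooth up to the boundary. We then set $D^\rho_\xi\eta:=(\nabla^\rho_{\tilde\xi}\tilde\eta)|_{\partial M}$. Three things need checking: this value is tangent to $\partial M$, it is independent of the extensions, and $D^\rho$ is a torsion-free linear connection.

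\emph{Tangency.} By Theorem \ref{thm2.4}, $\Phi(\xi,\eta)$ is the boundary value of
\[
\frac{d\rho(\tilde\xi)\,d\rho(\tilde\eta)}{\rho}-d\rho(\nabla^\rho_{\tilde\xi}\tilde\eta),
\]
and the first summand vanishes along $\partial M$ because both fields lie in $\frak X_\partial$. The hypothesis $\Phi|_{T\partial M\x T\partial M}=0$ therefore gives $d\rho(\nabla^\rho_{\tilde\xi}\tilde\eta)|_{\partial M}=0$. This is exactly tangency of the boundary value, so $D^\rho_\xi\eta\in\frak X(\partial M)$.

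\emph{Independence of extensions.} Two extensions differ by a field of the form $\rho\zeta$ with $\zeta\in\frak X(U)$. First take $\tilde\eta'=\tilde\eta+\rho\zeta$. Then
\[
\nabla^\rho_{\tilde\xi}(\rho\zeta)=d\rho(\tilde\xi)\zeta+\rho\nabla^\rho_{\tilde\xi}\zeta .
\]
The first term is $\Cal O(\rho)$ since $\tilde\xi\in\frak X_\partial$. For the second, $\nabla^\rho_{\tilde\xi}\zeta$ is smooth by (iii) because $\tilde\xi\in\frak X_\partial$, so $\rho\nabla^\rho_{\tilde\xi}\zeta$ is $\Cal O(\rho)$. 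Next take $\tilde\xi'=\tilde\xi+\rho\zeta$. Then $\nabla^\rho_{\rho\zeta}\tilde\eta=\rho\nabla^\rho_\zeta\tilde\eta$, and this is $\Cal O(\rho)$ by (iii) since $\tilde\eta\in\frak X_\partial$. The same computation with $f\tilde\eta$ in place of $\tilde\eta$ gives the Leibniz rule, and tensoriality in $\xi$ is evident. Torsion-freeness follows because
\[
\nabla^\rho_{\tilde\xi}\tilde\eta-\nabla^\rho_{\tilde\eta}\tilde\xi=[\tilde\xi,\tilde\eta]
\]
in the interior, since $\nabla^\rho$ is torsion-free there. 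Both sides are smooth up to the boundary, and the boundary value of $[\tilde\xi,\tilde\eta]$ is $[\xi,\eta]$ because $\frak X_\partial$ is a Lie subalgebra whose restriction to $\partial M$ is compatible with brackets.

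\emph{Change of defining function.} For $\hat\rho=e^f\rho$ the paper computes
\[
\nabla^{\hat\rho}_{\tilde\xi}\tilde\eta=\nabla^\rho_{\tilde\xi}\tilde\eta+df(\tilde\xi)\tilde\eta+df(\tilde\eta)\tilde\xi ,
\]
with $f$ smooth up to the boundary. Restricting to $\partial M$ gives
\[
D^{\hat\rho}=D^\rho+\iota^*df,
\]
where $\iota:\partial M\to\partial M$ is the boundary inclusion. This is a projective change by the smooth one-form $\iota^*df$, so $D^{\hat\rho}$ and $D^\rho$ are projectively equivalent. Hence the local projective structures on the overlaps of the domains $U$ agree and patch to a global projective structure on $\partial M$.

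The only delicate step is the well-definedness check: one must make sure every correction term is covered by (iii), which needs the other vector field to lie in $\frak X_\partial$. This is why both extensions are taken in $\frak X_\partial(U)$ and the differences are written as $\rho\zeta$. Tangency is the only place where the vanishing of $\Phi$ is used, and it is immediate from Theorem \ref{thm2.4}.
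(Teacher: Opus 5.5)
Your proof is correct and follows the same route as the paper. You get tangency of $\nabla^\rho_{\tilde\xi}\tilde\eta$ along $\partial M$ from the formula for $\Phi$ in Theorem \ref{thm2.4}, define the boundary connection via extensions in $\frak X_\partial$, inherit torsion-freeness from $\nabla^\rho$, and obtain projective equivalence from the restriction of $df$ when $\hat\rho=e^f\rho$. Your only additions are spelling out the independence of extensions, which the paper calls immediate, and a small slip: the boundary inclusion should be $\iota\colon\partial M\to\barm$.
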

\begin{proof}
  By Theorem \ref{thm2.4} vanishing of $\Phi$ on $T\partial M\x
  T\partial M$ is equivalent to the fact that for $\xi,\eta\in\frak
  X_\partial(\barm)$, $d\rho(\nabla^\rho_\xi\eta)$ vanishes along the
  boundary, so $\nabla^\rho$ maps $\frak X_\partial(\barm)\x \frak
  X_{\partial}(\barm)$ to $\frak X_{\partial}(\barm)$. Given vector
  fields $\underline{\xi}$ and $\underline{\eta}$ on $\partial M$, we
  can choose smooth extensions to $\xi,\eta\in\frak X_\partial(\barm)$
  and then consider the boundary value of $\nabla^\rho_\xi\eta$ which
  lies in $\frak X(\partial M)$. One immediately verifies that this
  depends only on $\underline{\xi}$ and $\underline{\eta}$ and not on
  the choice of the extensions. Thus we obtain a well defined
  operation $\frak X(\partial M)\x\frak X(\partial M)\to\frak
  X(\partial M)$, which clearly defines a linear connection on
  $T\partial M$.

  Torsion-freeness of $\nabla^\rho$ then immediately implies torsion-freeness of the
  induced connection. Finally, we have seen already that for $\hat\rho=e^f\rho$, we
  get $\nabla^{\hat\rho}_\xi\eta=\nabla^\rho_\xi\eta+df(\xi)\eta+df(\eta)\xi$. Since
  the function $f$ is smooth up to the boundary, so is $df$ and the restriction of
  $df$ to $T\partial M$ defines an element of $\Om^1(\partial M)$, which describes a
  projective equivalence between the induced connections on the boundary.
\end{proof}

Of course, there is a natural strengthening of this condition, which will turn out to
be of crucial importance in what follows, namely that $\Phi$ vanishes identically on
all of $T\barm|_{\partial M}\x T\partial M$. By Theorem \ref{thm2.4}, this is the
case if and only if for any $\xi\in\frak X(\barm)$ and $\eta\in\frak
X_{\partial}(\barm)$, $d\rho(\nabla^\rho_\xi\eta)$ and
$\frac{d\rho(\xi)d\rho(\eta)}{\rho}$ have the same boundary value.

\begin{definition}\label{def2.5}
  (1) If $\nabla$ is projectively pre-compact and is such that the
  weighted second fundamental form $\Phi |_{T\partial M\x
      T\partial M}$ from Theorem \ref{thm2.4} vanishes, then we say
  that $\nabla$ is \textit{projectively pre-compact with totally
    geodesic boundary}.

  (2) In the situation of (1) for any of the connections $\nabla^{\rho}$, we will
  denote the induced linear connection on the boundary also by $\nabla^\rho$.

  (3) If $\nabla$ is projectively pre-compact and such that the bilinear form $\Phi$
  from Theorem \ref{thm2.4} vanishes on $TM|_{\partial M}\x T\partial M$, then we say
  that $\nabla$ is \textit{projectively pre-compact with strongly totally geodesic
    boundary}.
\end{definition}

\subsection{Ricci curvature and second fundamental form}\label{2.6}
We next analyze the curvature asymptotics of the modified connections
$\nabla^\rho$. Recall that for a smooth manifold $N$ and a general linear connection
on $TN$, there is a Ricci-type contraction of the curvature, which defines a section
of $\otimes^2T^*N$. This is symmetric if and only if the connection preserves a
volume density.

\begin{lemma}\label{lemma2.6}
  For $\barm=M\cup\partial M$ consider a special linear connection $\nabla$ on $TM$
  which is projectively pre-compact. For a local defining function $\rho$ for
  $\partial M$, consider the modified connection $\nabla^\rho$ on $TM$ and let
  $R^\rho$ be its curvature and $\Ric^\rho$ its Ricci curvature.

  (1) For $\xi,\eta,\ze\in\frak X(\barm)$ the vector field
  $\rho R^\rho(\xi,\eta)(\ze)$ admits a smooth extension to the boundary. Moreover,
  $R^\rho(\xi,\eta)(\ze)$ itself admits a smooth extension to the boundary if both
  $\xi$ and $\eta$ are in $\frak X_\partial(\barm)$.

  (2) For $\xi,\eta\in\frak X(\barm)$, the function $\rho\Ric^\rho(\xi,\eta)$ admits
  a smooth extension to the boundary. Moreover, $\Ric^\rho(\xi,\eta)$ admits a smooth
  extension to the boundary if both $\xi$ and $\eta$ lie in
  $\frak X_\partial(\barm)$.

  (3) Suppose that $\nabla$ is projectively pre-compact with totally geodesic
  boundary. Then $R^\rho(\xi,\eta)(\ze)$ admits a smooth extension to the boundary if
  $\ze$ lies in $\frak X_\partial(\barm)$ and $\Ric^\rho(\xi,\eta)$ also admits a smooth
  extension to the boundary if one of its entries lies in $\frak X_{\partial}(\barm)$.
\end{lemma}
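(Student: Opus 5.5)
The proof will be a direct computation from the formula
\[
R^\rho(\xi,\eta)(\ze)=\nabla^\rho_\xi\nabla^\rho_\eta\ze-\nabla^\rho_\eta\nabla^\rho_\xi\ze-\nabla^\rho_{[\xi,\eta]}\ze .
\]
The key tool is that $\nabla^\rho$ restricts to an operation that is smooth up to the boundary whenever one entry lies in $\frak X_\partial(\barm)$ (condition (iii) of Definition \ref{def2.2}). Alongside this I will use the following decomposition. Near a boundary point, every $\eta\in\frak X(U)$ can be written as $\eta=d\rho(\eta)N+\eta'$ with $\eta'\in\frak X_\partial(U)$, where $N$ is a fixed vector field with $d\rho(N)=1$. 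In particular, $\rho\xi\in\frak X_\partial$ for every $\xi$.

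For (1), I multiply by $\rho$ and use the Leibniz rule. For example,
\[
\rho\nabla^\rho_\xi\nabla^\rho_\eta\ze=\nabla^\rho_\xi(\rho\nabla^\rho_\eta\ze)-d\rho(\xi)\nabla^\rho_\eta\ze .
\]
The field $\rho\nabla^\rho_\eta\ze$ is smooth up to the boundary and, since its boundary value is tangential, it lies in $\frak X_\partial$. Hence its $\nabla^\rho_\xi$-derivative is smooth by (iii). The term $d\rho(\xi)\nabla^\rho_\eta\ze$ cancels against the matching term coming from $-\rho\nabla^\rho_\eta\nabla^\rho_\xi\ze$, up to contributions I will organize via the decomposition: I write $d\rho(\xi)\eta-d\rho(\eta)\xi$, which lies in $\frak X_\partial$. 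So $\rho R^\rho$ is a combination of manifestly smooth terms, with the remaining pieces $\nabla^\rho_{d\rho(\xi)\eta-d\rho(\eta)\xi}\ze$ and $\rho\nabla^\rho_{[\xi,\eta]}\ze$ smooth.

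A cleaner route is tensoriality. Because $R^\rho$ is a tensor over $U\cap M$, it suffices to check the claims on a local frame $\{N,e_i\}$ with $e_i\in\frak X_\partial$ (e.g.\ $N=\partial_\rho$, $e_i$ coordinate fields tangent to the level sets). Then $[N,e_i]$ and $[e_i,e_j]$ vanish. With $\xi,\eta\in\frak X_\partial$ and any $\ze$, the field $\nabla^\rho_\eta\ze$ is smooth by (iii), and then $\nabla^\rho_\xi$ applied to it is smooth again by (iii). This gives the second claim of (1), after extending from frame fields to arbitrary $\xi,\eta\in\frak X_\partial$: such fields are combinations of $e_i$ and $\rho N$, and the $\rho N$ case is covered by the $\rho R^\rho$ statement. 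For the first claim of (1) I compute $\rho R^\rho(N,e_i)\ze$ in the way described above. Part (2) then follows by contraction. Here the main subtlety is that the trace over the frame includes the dual coframe element $d\rho$. Its contribution $d\rho(R^\rho(e_j,N)\ze)$ needs care, and I will treat it by writing the trace in the tensorial coframe and using (1).

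For (3), the totally geodesic assumption gives $\nabla^\rho:\frak X_\partial\x\frak X_\partial\to\frak X_\partial$. Take $\ze\in\frak X_\partial$. In $R^\rho(N,e_i)\ze$, the term $\nabla^\rho_{e_i}\ze$ is smooth and tangential, so $\nabla^\rho_N$ of it is smooth by (iii). The term $\nabla^\rho_N\ze$ is smooth by (iii), so $\nabla^\rho_{e_i}$ of it is smooth. For the Ricci statement, I contract $R^\rho(\cdot,\eta)\ze$ over the frame, using that $\ze$ (or, by symmetry of $\Ric^\rho$ for special connections, $\eta$) lies in $\frak X_\partial$. The $e_i$-components are then smooth, and the $N$-component $d\rho(R^\rho(N,\eta)\ze)$ is smooth by the above.

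The main obstacle is the first statement of (1) for the non-tangential case $\xi=N$, $\eta=e_i$ with general $\ze$. There $\nabla^\rho_N\ze$ is only $\Cal O(\rho^{-1})$, so I must show that $\rho\nabla^\rho_{e_i}\nabla^\rho_N\ze$ is smooth. I would write $\rho\nabla^\rho_N\ze=Y$ with $Y$ smooth and tangential along the boundary (Proposition \ref{prop2.3}'s argument), so that $\rho\nabla^\rho_{e_i}(\rho^{-1}Y)=\nabla^\rho_{e_i}Y-\rho^{-1}d\rho(e_i)Y$. This is smooth because $d\rho(e_i)=\Cal O(\rho)$.
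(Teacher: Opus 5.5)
Your arguments for (1), for the smoothness of $\rho\Ric^\rho$ in (2), and for (3) are fine. In your last paragraph only the smoothness of $Y=\rho\nabla^\rho_N\ze$ (from (ii)) is needed, not its tangentiality.

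There is a genuine gap in the second claim of (2). It sits exactly at the term you flag, and the remedy you sketch, writing the trace in a coframe and using (1), cannot work. For $\xi,\eta\in\frak X_\partial(\barm)$ write $\Ric^\rho(\xi,\eta)=d\rho(R^\rho(\xi,N)\eta)+\sum_i\al_i(R^\rho(\xi,e_i)\eta)$, with $\{d\rho,\al_i\}$ dual to $\{N,e_i\}$. The sum is smooth by (1). For the first term, however, (1) only tells you that $\rho$ times it is smooth.

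Switching to the b-frame $\{\rho N,e_i\}$ with dual $\{\frac{d\rho}{\rho},\al_i\}$ does not help. It just rewrites the term as $\frac1\rho d\rho(R^\rho(\xi,\rho N)\eta)$. Here (1) makes $R^\rho(\xi,\rho N)\eta$ smooth, but you would need it to be tangent to $\partial M$ along $\partial M$, and (1) says nothing about that.

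In fact no argument built only on (1) can succeed. Your frame computation shows that (1) follows from conditions (ii) and (iii) alone, whereas this claim genuinely needs condition (i). For a counterexample, take $\{(\rho,x):\rho\geq 0\}\subset\Bbb R^2$ and the torsion-free connection $D$ with $D_{\partial_\rho}\partial_\rho=\rho^{-1}\partial_\rho$, $D_{\partial_x}\partial_x=\partial_\rho$ and $D_{\partial_\rho}\partial_x=D_{\partial_x}\partial_\rho=0$. It preserves $\rho\,|d\rho\wedge dx|$, so $D-\frac{d\rho}{\rho}$ is special. It satisfies (ii) and (iii), but (i) fails. Yet $R^D(\partial_x,\partial_\rho)\partial_x=-\rho^{-1}\partial_\rho$, hence $\Ric^D(\partial_x,\partial_x)=-\rho^{-1}$, which is not smooth up to the boundary.

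The missing step is to go back to the defining formula for this one component:
\[
d\rho(R^\rho(\xi,N)\eta)=d\rho(\nabla^\rho_\xi\nabla^\rho_N\eta)-d\rho(\nabla^\rho_N\nabla^\rho_\xi\eta)-d\rho(\nabla^\rho_{[\xi,N]}\eta).
\]
Since $\eta\in\frak X_\partial(\barm)$, the fields $\nabla^\rho_N\eta$ and $\nabla^\rho_{[\xi,N]}\eta$ are smooth by (iii). Then $\nabla^\rho_\xi\nabla^\rho_N\eta$ is smooth by (iii) again, because $\xi\in\frak X_\partial(\barm)$. The middle term is smooth by condition (i), applied to the smooth field $\nabla^\rho_\xi\eta$. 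This is exactly how the paper handles this term; for your coordinate frame with $\xi=e_j$, the bracket term even vanishes.
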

\begin{proof}
  We directly use the defining equation
  \begin{equation}
    \label{def-R}
    R^\rho(\xi,\eta)(\ze)=\nabla^\rho_\xi\nabla^\rho_\eta\ze-\nabla^\rho_\eta\nabla^\rho_\xi\ze-
    \nabla^\rho_{[\xi,\eta]}\zeta. 
  \end{equation}
  Observe that by skew symmetry of $R^\rho$ in $\xi$ and $\eta$, we may always assume
  that one of the two fields, say $\eta$, satisfies $d\rho(\eta)\equiv 0$. (Locally fix a
  vector field $\xi_0$ such that $d\rho(\xi_0)\equiv 1$ and then expand
  $\xi=d\rho(\xi)\xi_0+(\xi-d\rho(\xi)\xi_0)$ and like-wise for $\eta$.)

  In the interior, we have $\rho R^\rho (\xi,\eta)(\zeta)=R^\rho(\xi,\eta)(\rho\ze)$ and
  since $\rho\ze$ lies in $\frak X_\partial(\barm)$, replacing $\ze$ by $\rho\ze$ in
  \eqref{def-R}, the last summand in the right hand side admits a smooth extension to
  the boundary. Likewise, $\nabla^\rho_\xi\rho\ze$ admits a smooth extension and
  since $\eta\in \frak X_\partial(\barm)$, we see that
  $\nabla^\rho_\eta\nabla^\rho_\xi\rho\ze$ admits a smooth extension to the
  boundary. Finally, by our assumption on $\eta$, we have
  $\nabla^\rho_\eta\rho\ze=\rho\nabla^\rho_\eta\ze$ and since this is in $\frak
  X_\partial(\barm)$, $\nabla^\rho_\xi\nabla^\rho_\eta\rho\ze$ also is smooth up to
  the boundary. Hence $\rho R^\rho(\xi,\eta)(\zeta)$ always admits a smooth extension
  to the boundary.

  If both $\xi$ and $\eta$ lie in $\frak X_{\partial}(\barm)$, then so does
  $[\xi,\eta]$, and hence $\nabla^\rho_\xi\ze$, $\nabla^\rho_\xi\ze$ and
  $\nabla^\rho_{[\xi,\eta]}\ze$ all admit smooth extensions. This in turn shows that
  also $\nabla^\rho_\xi\nabla^\rho_\eta\ze$ and $\nabla^\rho_\eta\nabla^\rho_\xi\ze$
  extend smoothly, and this completes the proof of (1).

  In the setting of (3), assume that $\ze$ and without loss of generality $\eta$ lie
  in $\frak X_\partial(\barm)$. Then the last two summands in \eqref{def-R} cause no
  problems while the assumptions imply that $\nabla^\rho_\eta\ze$ lies in $\frak
  X_{\partial}(\barm)$ so $\nabla^\rho_\xi\nabla^\rho_\eta\ze$ admits a smooth
  extension to the boundary, too.

  \smallskip
  
  Switching to the Ricci curvature, the fact that $\rho\Ric^\rho$ admits a smooth
  extension to the boundary follows immediately from (1) since $\Ric^\rho$ is a
  contraction of $R^\rho$. For the second part, this is not as easy, since when
  computing $\Ric^\rho(\xi,\eta)$ as a trace, $\xi$ and $\eta$ are not both inserted
  into the form slots of $R^\rho$. As in (1), we locally choose $\xi_0\in\frak
  X(\barm)$ such that $d\rho(\xi_0)\equiv 1$ and extend this to a local frame by
  $\xi_i\in\frak X(\barm)$, $i=1,\dots,n$ such that $d\rho(\xi_i)\equiv 0$ for $i\geq
  1$. Then the dual coframe has the form $d\rho$, $\al_i$ for $i=1,\dots,n$, and by
  definition we get (on $M$)
  \begin{equation}
    \label{def-Ric}
    \Ric^\rho(\xi,\eta)=d\rho(R^\rho(\xi,\xi_0)(\eta))+\textstyle\sum_{i\geq
      1}\al_i(R^\rho(\xi,\xi_i)(\eta)). 
  \end{equation}
  Since $\xi,\xi_i\in\frak X_\partial(\barm)$ each term in the sum in the right hand side
  admits a smooth extension to the boundary. For the first term in the right hand
  side, we have to go back to the defining equation \eqref{def-R}. But by
  construction $\nabla_{[\xi,\xi_0]}\eta$, $\nabla_{\xi_0}\eta$ and hence
  $\nabla_\xi\nabla_{\xi_0}\eta$, and $\nabla_\xi\eta$ admit smooth extension to the
  boundary. But this implies that also $d\rho(\nabla_{\xi_0}\nabla_\xi\eta)$ admits a
  smooth extension, so the result follows.

  Under the stronger assumption of (3), we know from above that all curvature terms
  in the right hand side of \eqref{def-Ric} admit a smooth extension to the boundary
  if $\eta$ lies in $\frak X_{\partial}(\barm)$. By symmetry of $\Ric^\rho$ in the
  interior, this implies the last part of (3).
\end{proof}

Recall that the Schouten tensor of a linear connection on the tangent bundle $TN$ of
a smooth manifold $N$ is an equivalent encoding of the Ricci curvature, which mixes
the symmetric and skew symmetric parts with different factors. In particular, for a
special connection, it is just $\frac{1}{\dim(N)-1}$ times the Ricci curvature. Thus we see that the analog of the statements on $\Ric^\rho$ from Lemma
\ref{lemma2.6} also hold for the Schouten tensor $\Rho^\rho$ of $\nabla^\rho$.

\begin{thm}\label{thm2.6}
  Consider $\barm=M\cup\partial M$ and a special connection $\nabla$ on $TM$ which is
  projectively pre-compact. Let $\si\in\Ga(\Cal E(1)|_M)$ be a nonzero section that
  is parallel for $\nabla$ and let $\Rho$ be the Schouten tensor of $\nabla$. Then
  the section $\si\Rho$ of $S^2T^*M(1)$ admits a smooth extension to the boundary and
  its boundary value restricts on $T\partial M\x T\partial M$ to the canonical
  weighted second fundamental form $\Phi$ from Theorem \ref{thm2.4}. Moreover, the form
  $\Phi$ vanishes on $T\barm|_{\partial M}\x T\partial M$ if and only if the boundary
  value of $\si\Rho$ has the same property.
\end{thm}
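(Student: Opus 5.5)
The plan is to compare $\si\Rho$ with $\nabla^\rho\nabla^\rho\si$ through the projective transformation law for the Schouten tensor, then read off boundary values using Theorem \ref{thm2.4} and Lemma \ref{lemma2.6}.

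First I record the transformation laws. Over $U\cap M$ the one-form $\Upsilon=\frac{d\rho}{\rho}$ is smooth, so the usual formulas apply there. The Schouten tensor changes by
\[
\Rho^\rho_{ab}=\Rho_{ab}-\nabla_a\Upsilon_b+\Upsilon_a\Upsilon_b,
\]
with the sign conventions matching \eqref{proj-equiv}. On $\Cal E(1)$ we have $\nabla^\rho_b\si=\nabla_b\si+\Upsilon_b\si$. On weight-one one-forms, $\nabla^\rho_a\mu_b=\nabla_a\mu_b-\Upsilon_b\mu_a$.

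A direct computation, in which the terms $\Upsilon_b\nabla_a\si$ cancel, then gives
\[
\nabla^\rho_a\nabla^\rho_b\si=\nabla_a\nabla_b\si+(\nabla_a\Upsilon_b-\Upsilon_a\Upsilon_b)\si .
\]
Hence the operator $\si\mapsto\nabla_a\nabla_b\si+\Rho_{ab}\si$ is projectively invariant. Since $\nabla\si=0$, on $U\cap M$ this yields the key identity
\[
\si\Rho=\nabla^\rho\nabla^\rho\si+\si\Rho^\rho=\nabla^\rho\nabla^\rho\si+\si^\rho\,\rho\Rho^\rho .
\]
Here $\si=\rho\si^\rho$ as in the proof of Lemma \ref{lem2.4}. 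The point is that $\nabla^\rho$ is again special: it preserves $\si^\rho$. So $\Rho^\rho$ is symmetric and a constant multiple of $\Ric^\rho$, and everything is consistent. Getting the conventions right in this step is the only place where I expect care is needed.

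Next I read off smoothness and the first boundary value. By Theorem \ref{thm2.4}, $\nabla^\rho\nabla^\rho\si$ extends smoothly with boundary value $\Phi^\rho$. By Lemma \ref{lemma2.6}(2), $\rho\Rho^\rho$ extends smoothly. Hence $\si\Rho$ extends smoothly to the boundary.

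If $\xi,\eta\in\frak X_\partial(\barm)$, then $\Rho^\rho(\xi,\eta)$ itself is smooth up to the boundary by Lemma \ref{lemma2.6}(2). Therefore $\rho\Rho^\rho(\xi,\eta)$ vanishes along $\partial M$. So the boundary value of $\si\Rho$ agrees with $\Phi$ on $T\partial M\x T\partial M$.

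Finally, the equivalence. Suppose $\Phi$ vanishes on $T\barm|_{\partial M}\x T\partial M$. Then in particular the weighted second fundamental form vanishes, so the boundary is totally geodesic. Lemma \ref{lemma2.6}(3) then shows that $\Rho^\rho(\xi,\eta)$ is smooth up to the boundary whenever $\eta\in\frak X_\partial(\barm)$. Consequently $\si^\rho\rho\Rho^\rho(\xi,\eta)$ vanishes on $\partial M$, and the boundary value of $\si\Rho(\xi,\eta)$ equals $\Phi(\xi,\eta)=0$.

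Conversely, suppose the boundary value of $\si\Rho$ vanishes on $T\barm|_{\partial M}\x T\partial M$. By the first part, $\Phi$ then vanishes on $T\partial M\x T\partial M$, so the boundary is again totally geodesic. Lemma \ref{lemma2.6}(3) applies as before, and the same identity gives $\Phi(\xi,\eta)=0$ whenever $\eta$ is tangent to the boundary.
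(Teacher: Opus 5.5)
Your proposal is correct and follows essentially the same route as the paper. Both rest on the identity $\si\Rho=\nabla^\rho\nabla^\rho\si+\si\Rho^\rho$, which comes from the projective invariance of $\si\mapsto\nabla\nabla\si+\Rho\si$, and then read off boundary values via Theorem \ref{thm2.4} and Lemma \ref{lemma2.6}, invoking part (3) of that lemma once either hypothesis forces the boundary to be totally geodesic. The only differences are that you verify the invariance by direct computation rather than citing \cite{BEG}, and that you spell out the two directions of the final equivalence separately.
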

\begin{proof}
  On $M$, we can write $\si\Rho=\nabla\nabla\si+\si\Rho$ and it is well known that
  the right hand side defines a projectively invariant second order differential
  operator $\Ga(\Cal E(1))\to \Ga(S^2T^*M(1))$, see \cite{BEG}. Hence we may as well
  express it in terms of the projectively equivalent connection $\nabla^\rho$, whence
 \begin{equation}\label{Hessian}
\si\Rho=\nabla^\rho\nabla^\rho\si+\si\Rho^\rho. 
 \end{equation}
Now the two terms on the right hand side admit a smooth extension to the
boundary. For the first term this follows from Theorem \ref{thm2.4} and for the
second term this follows from Lemma \ref{lemma2.6}, taking into account that
$\si=\rho\si^\rho$. This proves the first claim. For the second claim, we know that
for $\xi,\eta\in\frak X_\partial(\barm)$, the function $\Rho^\rho(\xi,\eta)$ admits a
smooth extension to the boundary (again using Lemma \ref{lemma2.6}), so
$\si\Rho^\rho(\xi,\eta)$ vanishes along the boundary. Thus
$\si\Rho(\xi,\eta)=\nabla^\rho\nabla^\rho\si(\xi,\eta)$ and the boundary value of
this is the canonical weighted second fundamental form from Theorem
\ref{thm2.4}. Hence if $\si\Rho$ vanishes on $T\partial M\x T\partial M$, then we are
in the case of totally geodesic boundary, so by part (3) of Lemma \ref{lemma2.6},
$\Rho^\rho(\xi,\eta)$ admits a smooth extension the boundary if one of its entries
lies in $\frak X_\partial(\barm)$. Hence we see that, in this situation, the boundary
value of $\si\Rho$ again coincides with $\Phi$.
\end{proof}

\begin{cor}\label{cor2.6}
  For $\barm=M\cup\partial M$, let $\nabla$ be a linear connection on $M$ that is
  projectively pre-compact and let $\Rho$ be its Schouten tensor. Then the boundary
  is totally geodesic if and only if $\Rho(\xi,\eta)$ admits a smooth extension to
  the boundary whenever $\xi,\eta\in\frak X_{\partial}(\barm)$ and strongly totally
  geodesic if and only if this holds if one of the two fields lies in $\frak
  X_\partial(\barm)$.

  Moreover, if $\nabla$ is projectively pre-compact with strongly geodesic boundary,
  then for any $\xi,\eta\in\frak X(\barm)$, we get
  \begin{equation}\label{fund-form2}
   d\rho(\nabla^{\rho}_\xi\eta)=\xi\cdot
   d\rho(\eta)+\rho\Rho^\rho(\xi,\eta)-\rho\Rho(\xi,\eta).  
  \end{equation}
\end{cor}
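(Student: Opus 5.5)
The idea is to read everything off the identity \eqref{Hessian}, $\si\Rho=\nabla^\rho\nabla^\rho\si+\si\Rho^\rho$, together with \eqref{fund-form} and the relation $\si=\rho\si^\rho$, where $\si^\rho$ is smooth and nowhere vanishing up to the boundary. Dividing \eqref{Hessian} by $\si^\rho$ and using \eqref{fund-form}, we get on $M$, for all $\xi,\eta\in\frak X(\barm)$,
\[
\rho\Rho(\xi,\eta)=\xi\cdot d\rho(\eta)-d\rho(\nabla^\rho_\xi\eta)+\rho\Rho^\rho(\xi,\eta).
\]
This is exactly \eqref{fund-form2} rearranged, so the second part of the corollary holds on $M$ with no hypothesis beyond pre-compactness. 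The only point to note is that, under strong total geodesy, every term in it is smooth up to the boundary: $\rho\Rho^\rho$ is smooth by Lemma \ref{lemma2.6}(2), and $\rho\Rho$ is then smooth by the first claim of Theorem \ref{thm2.6}. Nevertheless, I will derive the characterizations first and then record the formula.

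For the characterizations, Theorem \ref{thm2.6} shows that $\si\Rho$ is smooth up to the boundary with boundary value agreeing with $\Phi$ on $T\partial M\x T\partial M$. Since $\si/\rho$ is smooth and nonvanishing, for given fields $\xi,\eta$ the function $\Rho(\xi,\eta)$ extends smoothly exactly when the boundary value of $\si\Rho(\xi,\eta)$ vanishes.

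\emph{Totally geodesic case.} Take $\xi,\eta\in\frak X_\partial(\barm)$. The boundary value of $\si\Rho(\xi,\eta)$ is $\Phi(\xi,\eta)$, so smooth extendability of $\Rho(\xi,\eta)$ for all such pairs is equivalent to $\Phi|_{T\partial M\x T\partial M}=0$. This is the definition of totally geodesic boundary.

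\emph{Strongly totally geodesic case.} Smooth extendability of $\Rho(\xi,\eta)$ whenever one of the fields lies in $\frak X_\partial(\barm)$ is equivalent to the boundary value of $\si\Rho$ vanishing on $T\barm|_{\partial M}\x T\partial M$. By symmetry of $\Rho$ (it is a multiple of $\Ric$ for a special connection), it suffices to treat the case where the second entry is tangential. By the last claim of Theorem \ref{thm2.6}, this vanishing is equivalent to $\Phi$ vanishing on $T\barm|_{\partial M}\x T\partial M$, which is strong total geodesy.

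The step needing the most care is the identification of boundary values of $\si\Rho$ with $\Phi$ on $T\barm|_{\partial M}\x T\partial M$. This requires Lemma \ref{lemma2.6}(3) to ensure that $\rho\Rho^\rho(\xi,\eta)$ vanishes at the boundary when $\eta$ is tangential, so that the $\si\Rho^\rho$ term drops out. However, this was already handled inside Theorem \ref{thm2.6} and only needs to be quoted. The remaining point to check is that in the totally geodesic case one does not need the stronger statement of Theorem \ref{thm2.6}; there, Lemma \ref{lemma2.6}(2) suffices.
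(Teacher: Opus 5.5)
Your proposal is correct and follows the paper's own argument. You obtain \eqref{fund-form2} by inserting \eqref{fund-form} into \eqref{Hessian} and dividing by the nowhere vanishing density $\si^\rho$. You read off the two characterizations from Theorem~\ref{thm2.6}, using that $\Rho(\xi,\eta)$ extends smoothly exactly when the boundary value of $\si\Rho(\xi,\eta)=\rho\si^\rho\Rho(\xi,\eta)$ vanishes.
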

\begin{proof}
The first part follows immediately from Theorem \ref{thm2.6}, while
\eqref{fund-form2} follows readily from combining \eqref{fund-form}
with \eqref{Hessian}.
\end{proof}

\subsection{Some b-calculus}\label{2.6a}
For the next step we  use the description of the sheaf $\frak
  X_{\partial}$ on $\barm$ developed by the Melrose school in geometric
analysis. Since $\frak X_{\partial}$ is a locally free sheaf, there is a smooth
vector bundle $T^b\barm\to\barm$, called the \textit{$b$-tangent bundle}, whose sheaf
of smooth sections coincides with $\frak X_\partial$. So by definition, a section
$\xi$ of $T^b\barm$ is a special vector field, but one has to carefully distinguish
between the two points of view, in particular if boundary values are concerned. As a
vector field, $\xi$ has a boundary value which is a vector field on $\partial M$,
i.e.~a smooth section of $T\partial M$. But $\xi$ also has a boundary value as a
section of $T^b\barm$, which is a section of $T^b\barm|_{\partial M}$, a bundle whose rank is
one larger than the one of $T\partial M$. Indeed, mapping $\xi\in\Ga(T^b\barm)$ to
its boundary value as a vector field defines a surjective bundle map
$T^b\barm|_{\partial M}\to T\partial M$. A neat way to express things is that for
$\xi\in\frak X(M)$, we have $\xi\in\Ga(T^b\barm)$ if and only if
$d\rho(\xi)|_{\partial M}=0$ for one or equivalently any local defining function
$\rho$ i.e.\ iff $d\rho(\xi)=\Cal O(\rho)$. And the boundary value of $\xi$ as a
section of $T^b\barm$ vanishes iff $\xi=\rho\xi_1$ for some $\xi_1\in\Ga(T^b\barm)$
and hence iff $d\rho(\xi)=\Cal O(\rho^2)$.

\begin{lemma}\label{lem2.6a}
  There is a short exact sequence
  \begin{equation}\label{Tb-exact}
  0\to \partial M\x \Bbb R\to T^b\barm|_{\partial M}\to T\partial M\to 0.
  \end{equation}
  A canonical trivializing section of the first bundle can be realized as the
  boundary value of $\rho\nu\in\Ga(T^b\barm)$, where $\rho$ is a local defining
  function for the boundary and $\nu\in\frak X(\barm)$ is a vector field such that
  $d\rho(\nu)|_{\partial M}\equiv 1$.
\end{lemma}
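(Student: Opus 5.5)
We need to prove Lemma \ref{lem2.6a}: there is a short exact sequence $0\to\partial M\x\Bbb R\to T^b\barm|_{\partial M}\to T\partial M\to 0$, and the kernel is trivialized canonically by the boundary value of $\rho\nu$.

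The plan is to work in local coordinates adapted to the boundary. Near a boundary point, choose coordinates $(\rho,x^1,\dots,x^n)$ with $\rho$ a local defining function. A vector field $\xi=a\partial_\rho+\sum b^i\partial_{x^i}$ lies in $\frak X_\partial$ if and only if $a=\Cal O(\rho)$, i.e.\ $a=\rho a_1$. Hence $\rho\partial_\rho,\partial_{x^1},\dots,\partial_{x^n}$ form a local frame of $T^b\barm$; this is exactly the local freeness used to define $T^b\barm$.

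The map $T^b\barm|_{\partial M}\to T\partial M$ sends the boundary value of $\xi$ as a section of $T^b\barm$ to its boundary value as a vector field. This is well defined and $C^\infty$-linear, because the boundary value of $\xi$ as a $b$-section vanishes iff $\xi=\rho\xi_1$, which forces the vector field boundary value to vanish. In the frame above it sends $\partial_{x^i}\mapsto\partial_{x^i}|_{\partial M}$ and $\rho\partial_\rho\mapsto0$. It is therefore surjective, and its kernel is the line subbundle spanned by the boundary value of $\rho\partial_\rho$. This gives exactness and shows the kernel is a trivial line bundle, at least locally.

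The key step is canonicity of the trivializing section. Take any $\nu$ with $d\rho(\nu)|_{\partial M}\equiv1$. Then $\rho\nu\in\frak X_\partial$. Writing $\nu=d\rho(\nu)\partial_\rho+\sum c^i\partial_{x^i}$, we get
\[
\rho\nu=d\rho(\nu)\,\rho\partial_\rho+\sum\rho c^i\partial_{x^i},
\]
whose $b$-boundary value is $\rho\partial_\rho|_{\partial M}$. So the section is independent of $\nu$.

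For independence of $\rho$, let $\hat\rho=e^f\rho$ and let $\hat\nu$ satisfy $d\hat\rho(\hat\nu)=1$ on $\partial M$. Along the boundary $d\hat\rho=e^fd\rho$, so we may take $\hat\nu=e^{-f}\nu$. Then $\hat\rho\hat\nu=\rho\nu$ exactly, and the general case follows from the $\nu$-independence already shown.

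Since the local sections agree on overlaps, they patch to a global nowhere vanishing section. This gives the canonical isomorphism of the kernel with $\partial M\x\Bbb R$.

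An invariant alternative avoids the frame: $\xi\mapsto (d\rho(\xi)/\rho)|_{\partial M}$ is a functional on $T^b\barm|_{\partial M}$, independent of $\rho$ at the boundary, that evaluates to $1$ on $\rho\nu$. The main care point throughout is keeping the two notions of boundary value distinct; otherwise the argument is routine.
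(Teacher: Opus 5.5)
Your main argument is correct and is essentially the paper's proof. The paper also shows that $\rho\nu$ has nowhere-vanishing $b$-boundary value lying in the kernel, obtains $\nu$-independence from $\rho(\hat\nu-\nu)=\rho\eta$ with $\eta\in\Ga(T^b\barm)$ (which your frame computation makes explicit), and obtains $\rho$-independence via $\hat\nu=e^{-f}\nu$, so that $\hat\rho\hat\nu=\rho\nu$.

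One correction to your closing aside. Under $\hat\rho=e^f\rho$ one has $\frac{d\hat\rho(\xi)}{\hat\rho}=\frac{d\rho(\xi)}{\rho}+df(\xi)$. So the functional $\xi\mapsto(d\rho(\xi)/\rho)|_{\partial M}$ is independent of $\rho$ only on the kernel of $T^b\barm|_{\partial M}\to T\partial M$, not on the whole fiber. It is in fact the $\rho$-dependent splitting used in the proof of Theorem~\ref{thm2.8}.
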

\begin{proof}
  By construction, the vector field $\rho\nu$ vanishes along $\partial M$, so
  $\rho\nu\in\Ga(T^b\barm)$. Since $d\rho(\rho\nu)=\rho+\Cal O(\rho^2)$ and this is
  not $\Cal O(\rho^2)$, we conclude that the boundary value of $\rho\nu$ as a section
  of $T^b\barm$ spans the kernel of $T^b\barm|_{\partial M}\to T\partial M$, and it
  remains to show that this boundary value is independent of the choice of $\rho$ and
  $\nu$.

Any other choice for $\nu$ is of the form $\hat\nu=\nu+\eta$, where $\eta\in\frak
X(\barm)$ is such that $d\rho(\eta)|_{\partial M}\equiv 0$. But then
$\eta\in\Ga(T^b\barm)$ and hence $\rho\hat\nu=\rho\nu+\rho\eta$ has the same boundary
value as $\rho\nu$. Any other defining function is of the form $\hat\rho=e^f\rho$ and
then $d\hat\rho|_{\partial M}=e^fd\rho|_{\partial M}$. Hence given a choice $\nu$ for
$\rho$, we can put $\hat\nu=e^{-f}\nu$ and then $\hat\rho\hat\nu=\rho\nu$, which
completes the proof.
\end{proof}

Once the vector bundle $T^b\barm$ is at hand, we can apply all the standard tools
from differential geometry to it. This immediately allows us to form the weighted
$b$-tangent bundle of weight $w\in\Bbb R$ by putting $T^b\barm(w):=T^b\barm\otimes
\Cal E(w)$. We can also apply vector bundle constructions, e.g.\ form the dual bundle
$(T^b\barm)^*$, the symmetric square $S^2T^b\barm$ or the symmetric square
$S^2(T^b\barm)^*$ of the dual. For the bundles constructed from $T^b\barm$ (without
dualization) the interpretation of sections is rather easy starting from tensor
products of sections of $T^b\barm$, which locally span the space of all sections. For
example, a section $b$ of $S^2T^b\barm$ is a section of $S^2TM$, which contracted
with $d\rho$ gives a section of $T^b\barm$ that is $\Cal O(\rho)$ and hence
contracting $b$ with two copies of $d\rho$ the result is a function that is $\Cal
O(\rho^2)$.

Things are a bit more subtle for the dual. As we have seen above, locally around a
boundary point, we can form a local frame for $T^b\barm$ consisting of
$\rho\nu$ and vector 
fields $\ze_1,\dots,\ze_n$ such that $d\rho(\ze_i)\equiv 0$. Now we can choose
$\al_i\in\Om^1(M)$ such that $\al_i(\nu)=0$ and $\al_i(\xi_j)=\delta_{ij}$ and add to
this the form $\frac{d\rho}{\rho}$ on $M$. Since for $\xi\in\Ga(T^b\barm)$,
$d\rho(\xi)=\Cal O(\rho)$, the function $\frac{d\rho(\xi)}{\rho}$ admits a smooth
extension to the boundary. Thus  $\frac{d\rho}{\rho}$ defines a smooth section
of $(T^b\barm)^*$ over all of $\barm$ and clearly the $\al_i$ also define such
sections, so together these form the dual of the frame
$\{\rho\nu,\ze_1,\dots,\ze_n\}$. This leads to a neat description of sections of
$(T^b\barm)^*$: A section of this bundle is a one-form $\phi$ on $M$ such that
$\rho\phi$ admits a smooth extension to the boundary and such that $\phi(\xi)$ admits
a smooth extension to the boundary if $\xi\in\frak X_\partial(\barm)$.

In particular, we have $\Om^1(\barm)\subset \Ga((T^b\barm)^*)$ corresponding to the
inclusion $T^*\barm\hookrightarrow (T^b\barm)^*$ dual to the projection $T^b\barm\to
T\barm$. The elements in this subspace can be easily characterized. Choose a (local)
defining function $\rho$ and a vector field $\nu$ such that $d\rho(\nu)|_{\partial
  M}\equiv 1$. Then $\rho\nu\in\Ga(T^b\barm)$ and we claim that if
$\phi\in\Ga(T^b\barm)^*$ satisfies $\phi(\rho\nu)|_{\partial M}=0$ then
$\phi\in\Om^1(\barm)$. Indeed, we know that $\rho\phi\in\Om^1(\barm)$, so it suffices
to prove that this vanishes along the boundary. But $\xi\in\frak X(M)$ can be written
as $\xi_1+d\rho(\xi)\nu$ where $\xi_1:=\xi-d\rho(\xi)\nu$ satisfies
$d\rho(\xi_1)=\Cal O(\rho)$ and hence $\xi_1\in\frak X_{\partial}(\barm)$. Thus
$\phi(\xi_1)$ admits a smooth extension to the boundary so $\rho\phi(\xi_1)$ vanishes
along the boundary. On the other hand
$\rho\phi(d\rho(\xi)\nu)=d\rho(\xi)\phi(\rho\nu)$, so this is $\Cal O(\rho)$ by
assumption.

Having these descriptions at hand, the passage to the results of tensorial
constructions is easy starting from tensor products of sections. For example, a
section $b$ of $S^2(T^b\barm)^*$ is a section of $S^2T^*M$ such that $\rho^2b$ admits
a smooth extension to the boundary and for $\xi\in\frak X_\partial(\barm)$, $\rho
b(\xi,\_)$ admits a smooth extension to the boundary as a section of $(T^b\barm)^*$.
Hence in particular for $\eta\in\frak X_{\partial}(\barm)$, $b(\xi,\eta)$ admits a
smooth extension to the boundary. As above, we have $\Ga(S^2T^*\barm)\subset
\Ga(S^2(T^b\barm)^*)$ and one immediately shows that a section $b$ lies in this subspace
if for $\rho$ and $\nu$ as above and $\xi\in\frak X_{\partial}(\barm)$, we have
$b(\rho\nu,\xi)=\Cal O(\rho)$ and $b(\rho\nu,\rho\nu)=\Cal O(\rho^2)$.

\subsection{A canonical connection}\label{2.7}
We can consider linear connections on bundles constructed from
$T^b\barm$, to which all the general results about linear connections
can be applied. In particular, starting from a projectively
pre-compact linear connection $\nabla$ on $M\subset\barm$, we have an
induced connection on any density bundle $\Cal E(w)$ by Proposition
\ref{prop2.3}. Hence over $M$, we have an induced linear connection on
any weighted tangent bundle, which we also denote by $\nabla$, and we obtain the following result.

\begin{thm}\label{thm2.7}
For $\barm=M\cup\partial M$, let $\nabla$ be a projectively pre-compact linear
connection on $TM$. Then for $\xi\in\frak X(\barm)$ and
$\eta\in\Ga(T^b\barm(-1))\subset\Ga(T\barm(-1))$, the weighted vector field
$\nabla_\xi\eta$ defined on $M$ admits a smooth extension to the boundary.

If the boundary is strongly totally geodesic, then this extension lies
in $\frak X_\partial(\barm)$. Hence, under this assumption, $\nabla$
extends smoothly to a linear connection on the vector bundle
$T^b\barm(-1)\to\barm$.
\end{thm}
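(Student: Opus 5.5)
The plan is to trivialize the weight using the parallel density. Let $\si\in\Ga(\Cal E(1))$ be the nonzero $\nabla$-parallel density from Proposition \ref{prop2.3}, and fix a local defining function $\rho$. Write $\si=\rho\si^\rho$ as in the proof of Lemma \ref{lem2.4}, where $\si^\rho$ is smooth and nowhere vanishing up to the boundary. Any $\eta\in\Ga(T^b\barm(-1))$ can then be written as $\eta=\eta_0\otimes(\si^\rho)^{-1}$ with $\eta_0\in\frak X_\partial(\barm)$. Since $\si$ is $\nabla$-parallel, $\si^{-1}$ is too, so on $M$
$$
\nabla_\xi\eta=\nabla_\xi\big(\rho\eta_0\otimes\si^{-1}\big)=\big(\nabla_\xi(\rho\eta_0)\big)\otimes\si^{-1}
=\rho\,\nabla_\xi(\rho\eta_0)\otimes(\si^\rho)^{-1}.
$$
So it suffices to show that the unweighted vector field $\rho\nabla_\xi(\rho\eta_0)$ extends smoothly to the boundary, and that under the strong assumption it lies in $\frak X_\partial(\barm)$.

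Next I express this through $\nabla^\rho=\nabla+\frac{d\rho}{\rho}$. The projective change formula \eqref{proj-equiv} gives
$$
\nabla_\xi(\rho\eta_0)=\nabla^\rho_\xi(\rho\eta_0)-\tfrac{d\rho(\xi)}{\rho}\rho\eta_0-\tfrac{d\rho(\rho\eta_0)}{\rho}\xi
=d\rho(\xi)\eta_0+\rho\nabla^\rho_\xi\eta_0-d\rho(\xi)\eta_0-d\rho(\eta_0)\xi ,
$$
so that $\rho\nabla_\xi(\rho\eta_0)=\rho^2\nabla^\rho_\xi\eta_0-\rho\,d\rho(\eta_0)\xi$. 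By condition (iii) of Definition \ref{def2.2}, $\nabla^\rho_\xi\eta_0$ is smooth up to the boundary because $\eta_0\in\frak X_\partial$. Hence both terms are smooth up to the boundary; in fact each is $\Cal O(\rho)$, since $d\rho(\eta_0)=\Cal O(\rho)$. This proves the first claim, and it uses only pre-compactness.

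For the second claim I check the criterion $d\rho(\cdot)=\Cal O(\rho)$ for membership in $\frak X_\partial$. Applying $d\rho$ gives
$$
d\rho\big(\rho\nabla_\xi(\rho\eta_0)\big)=\rho^2\,d\rho(\nabla^\rho_\xi\eta_0)-\rho\,d\rho(\eta_0)d\rho(\xi)
=\rho^2\Big(d\rho(\nabla^\rho_\xi\eta_0)-\tfrac{d\rho(\xi)d\rho(\eta_0)}{\rho}\Big).
$$
By Theorem \ref{thm2.4}, the boundary value of the bracket is $-\Phi(\xi,\eta_0)$, and this vanishes when the boundary is strongly totally geodesic. The bracket is therefore $\Cal O(\rho)$, so the whole expression is $\Cal O(\rho^3)$.

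This is stronger than what is needed: it shows the extension lies in $\frak X_\partial(\barm)$, and even that its $T^b$ boundary value has vanishing normal component. The one point to handle carefully is that membership in $T^b\barm(-1)$ must be measured after untwisting by the smooth density $\si^\rho$, not by $\si$. With $\si^\rho$ the untwisted field is $\rho\nabla_\xi(\rho\eta_0)$, which is a section of $T^b$ exactly when $d\rho$ of it is $\Cal O(\rho)$; I will redo the bookkeeping of the factors of $\rho$ carefully at this step.

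Finally I assemble the connection. Linearity over $C^\infty(\barm)$ in $\xi$ and the Leibniz rule in $\eta$ hold on $M$, so they persist on $\barm$ by continuity. Independence of the choice of $\rho$ is automatic, since the operator is just $\nabla$ on the interior. This gives the extension of $\nabla$ to a linear connection on $T^b\barm(-1)$. The main obstacle is this weight and $\rho$-power bookkeeping, namely identifying smooth sections of $T^b\barm(-1)$ correctly via $\si^\rho$.
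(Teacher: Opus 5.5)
The gap is in your first display: the power of $\rho$ is wrong, and everything after it inherits the error. Since $\si=\rho\si^\rho$, one has $\si^{-1}=\rho^{-1}(\si^\rho)^{-1}$, not $\rho(\si^\rho)^{-1}$. The correct identity is
\[
\nabla_\xi\eta=\big(\nabla_\xi(\rho\eta_0)\big)\otimes\si^{-1}=\tfrac{1}{\rho}\,\nabla_\xi(\rho\eta_0)\otimes(\si^\rho)^{-1},
\]
so the untwisted field you must study is $\rho^{-1}\nabla_\xi(\rho\eta_0)$, not $\rho\nabla_\xi(\rho\eta_0)$.

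As written, your conclusions are false, not just unproved. You obtain that $\nabla_\xi\eta$ vanishes along $\partial M$ for all $\xi$ and $\eta$, using only pre-compactness. No connection can do this. From $\nabla_\xi(f\eta)=(\xi\cdot f)\eta+f\nabla_\xi\eta$ you would get $(\xi\cdot f)\eta=0$ along $\partial M$ for every $f$. This fails for $f=\rho$, $\xi$ transversal and $\eta$ nonzero at a boundary point.

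The same slip has two further symptoms. It makes the strongly totally geodesic hypothesis superfluous, since your $d\rho$-component comes out $\Cal O(\rho^2)$ with no assumption. It also produces your ``stronger'' claim that the boundary value has no normal component. That claim is false in general: the normal component contains the Leibniz term $\nabla_\xi\tau$, as the proof of Theorem \ref{thm2.8} shows.

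The repair uses your own correct computation $\nabla_\xi(\rho\eta_0)=\rho\nabla^\rho_\xi\eta_0-d\rho(\eta_0)\xi$. Dividing by $\rho$, the untwisted field is $\nabla^\rho_\xi\eta_0-\frac{d\rho(\eta_0)}{\rho}\xi$. This is exactly the weight $-1$ projective change formula $\nabla_\xi\eta=\nabla^\rho_\xi\eta-\frac{d\rho(\eta)}{\rho}\xi$ that the paper's proof starts from. The first term is smooth by (iii) of Definition \ref{def2.2}, and the second is smooth because $d\rho(\eta_0)=\Cal O(\rho)$. So the extension exists, but it is generally nonzero on $\partial M$.

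Applying $d\rho$ gives $d\rho(\nabla^\rho_\xi\eta_0)-\frac{d\rho(\xi)d\rho(\eta_0)}{\rho}$. By Theorem \ref{thm2.4} its boundary value is $-\Phi(\xi,\eta_0)$. This is where the strongly totally geodesic assumption is genuinely used. It yields exactly $d\rho(\cdot)=\Cal O(\rho)$, i.e.\ membership in $\frak X_\partial(\barm)$, and nothing more.

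With this correction, and with the $\Cal O(\rho^3)$ and ``vanishing normal component'' claims dropped, your argument coincides with the paper's proof. Your untwisting through the $\nabla$-parallel density $\si$ simply rederives the weight $-1$ transformation law, in which the $d\rho(\xi)$ terms cancel.
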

\begin{proof}
The usual formula for projective changes in \cite{BEG} shows that, over $M$, we can
write $\nabla_\xi\eta=\nabla^\rho_\xi\eta-\frac{d\rho(\eta)}{\rho}\xi$ (and the
weight $-1$ is essential for the term containing $d\rho(\xi)$ to drop out). Since
$\eta$ is tangent to the boundary along the boundary, both summands admit a smooth
extension to the boundary. Applying $d\rho$ to the right hand side, we get
$d\rho(\nabla^\rho_\xi\eta)-\frac{d\rho(\xi)d\rho(\eta)}{\rho}$ which, in the case of
strongly totally geodesic boundary, vanishes along the boundary (see Theorem \ref{thm2.4}).
\end{proof}

Hence we have obtained a vector bundle $T^b\barm(-1)\to \barm$ of rank $n+1$, which
is endowed with a linear connection that we still denote by $\nabla$. This bundle can
be restricted to $\partial M$ without problems, and it follows from the general
theory of linear connections that one can also restrict the connection to the
boundary. Indeed for $\xi\in\frak X(\partial M)$ and
$\eta\in\Ga(T^b\barm(-1)|_{\partial M})$ one chooses (local) extensions to
$\tilde\xi\in\frak X_{\partial}(\barm)$ and $\tilde\eta\in\Ga(T^b\barm(-1))$ and then
considers $\nabla_{\tilde\xi}\tilde\eta|_{\partial M}$. It is clear that this depends
only on $\tilde\xi|_{\partial M}=\xi$, while for $\eta$, the freedom is to add a
$\rho\tilde\eta_1$ for $\eta_1\in\Ga(T^b\barm(-1))$, and one immediately concludes
that this leads to the same restriction to the boundary.

\subsection{Relation to boundary tractors}\label{2.8}
Starting from a projectively pre-compact connection $\nabla$ with
strongly totally geodesic boundary, we get an induced projective
structure on the boundary $\partial M$ as well as the rank $n+1$
vector bundle $T^b\barm(-1)|_{\partial M}\to\partial M$ which is
endowed with a linear connection induced by $\nabla$. One may hope
that this is related to the projective standard tractor bundle of the
boundary projective structure. There is a relation without additional
assumptions, but the main result is that under a slightly stronger
condition, the restriction indeed coincides with the standard tractor
bundle and its normal tractor connection.
We want to stress here that this conceptual description of
the boundary tractors is new even in the case of projectively compact
connections, the results in \cite{Proj-comp} apply only in the case
that $\nabla$ is Ricci flat in a neighborhood of the boundary. In
contrast, we only require asymptotic Ricci-flatness in directions
tangent to the boundary here.

\smallskip

Let us briefly recall the standard approach to projective tractors on a manifold $N$
of dimension $n$ endowed with a projective class of affine connections, see
\cite{BEG} for more details  and explicit formulae. One usually starts from the standard cotractor bundle,
which can be simply defined as a first jet prolongation of a line bundle $\Cal
T^*N:=J^1\Cal E(1)$. Here $\Cal E(1)\to N$ is the bundle of (projective)
$1$-densities, a trivial line bundle characterized by the fact that $\Cal
E(1)^{\otimes^{2n+2}}\cong(\Lambda^nTN)^{\otimes^2}$. The jet exact sequence for this
bundle has the form
$$
0\to T^*N(1)\to \Cal T^*N\to \Cal E(1)\to 0.  
$$
Any linear connection on $\Cal E(1)$ defines a splitting of this short exact
sequence, and in particular, one can apply this to the connections induced by the
affine connections in the projective class. Hence any connection in the projective
class gives rise to an identification $\Cal T^*N\cong \Cal E(1)\oplus T^*N(1)$ and
one can use the connection defining the identification, its Schouten tensor and
algebraic ingredients to define a linear connection on $\Cal T^*N$ in the given
splitting and then verify that the construction is independent of the choice of
connection. Hence it gives rise to a canonical connection on $\Cal T^*N$ called the
\textit{standard cotractor connection}.

The standard tractor bundle $\Cal TN\to N$ is then defined as the dual bundle to
$\Cal T^*N$, which implies that one gets an exact sequence
\begin{equation}\label{std-sequ}
0\to \Cal E(-1)\to \Cal TN\to TN(-1)\to 0 
\end{equation}
and any connection in the projective class determines a a splitting of this sequence.
The standard tractor connection on $\Cal TN$ is then obtained from dualizing the
standard cotractor connection and it is easy to derive the formula for the standard
tractor connection in such a splitting, see \cite{BEG}.

\begin{thm}\label{thm2.8}
  For $\barm=M\cup\partial M$, let $\nabla$ be an affine connection on
  $M$, which is projectively pre-compact and let $\Rho$ be the
  Schouten tensor of $\nabla$. Suppose further that for all
  $\xi,\eta\in\frak X(\barm)$, the function $\Rho(\xi,\eta)$ admits a
  smooth extension to the boundary if at least one of the fields lies
  in $\frak X_{\partial}(\barm)$ and its boundary value vanishes if
  both fields lie in $\frak X_{\partial}(\barm)$. Then the bundle
  $T^b\barm(-1)|_{\partial M}$ with the connection induced by $\nabla$
  can be naturally identified with the projective standard tractor
  bundle of the induced projective structure on $\partial M$ and its
  normal tractor connection.
\end{thm}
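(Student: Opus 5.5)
The plan is to first reduce to the setting of Theorem \ref{thm2.7}. Then I would identify the composition series of $T^b\barm(-1)|_{\partial M}$ with the tractor sequence \eqref{std-sequ} on $\partial M$. Finally I would compute the induced connection in a splitting determined by a defining function $\rho$ and compare with the formula from \cite{BEG} for the normal tractor connection.

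\emph{Reduction and density identification.} By Corollary \ref{cor2.6}, the assumption that $\Rho(\xi,\eta)$ extends whenever one entry lies in $\frak X_\partial(\barm)$ says exactly that the boundary is strongly totally geodesic. So Proposition \ref{prop2.5} gives the boundary projective structure, and Theorem \ref{thm2.7} gives the connection $\nabla$ on $T^b\barm(-1)$, restricted to $\partial M$ as explained after that theorem.

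Tensoring \eqref{Tb-exact} with $\Cal E(-1)$ gives
$0\to\Cal E(-1)|_{\partial M}\to T^b\barm(-1)|_{\partial M}\to T\partial M\otimes\Cal E(-1)|_{\partial M}\to0$.
To match this with \eqref{std-sequ}, I need $\Cal E_{\barm}(w)|_{\partial M}\cong\Cal E_{\partial M}(w)$. I would get this from the defining density $\si$ of Proposition \ref{prop2.3}. The form $\hat\nabla\si|_{\partial M}$ is a nowhere vanishing section of the conormal bundle twisted by $\Cal E(1)$. Hence $\La^{n+1}T^*\barm|_{\partial M}\cong\La^nT^*\partial M\otimes\Cal E(1)$. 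Taking the appropriate powers (the $2n+4$ versus $2n+2$ normalizations) yields $\Cal E_{\barm}(-1)|_{\partial M}\cong\Cal E_{\partial M}(-1)$, which is canonical since $\si$ is unique up to a constant. I should check that this identification is compatible with the connections: $\nabla^\rho$ on densities (Lemma \ref{lem2.4}) should restrict to the density connection of the boundary connection $\nabla^\rho$.

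\emph{Splitting and computation.} Fix $\rho$ and $\nu$ with $d\rho(\nu)=1$. The subbundle $\Cal E(-1)$ is spanned by $\tau\mapsto\tau\,\rho\nu$ (Lemma \ref{lem2.6a}). For the splitting, a section of $T\partial M(-1)$ is sent to the boundary value of any extension $\eta$ with $d\rho(\eta)\equiv0$. This is well defined, since two such extensions differ by $\rho\eta_1$ with $\eta_1$ tangent, whose $b$-boundary value vanishes. A change $\hat\rho=e^f\rho$ should change this splitting by the standard rule $df|_{T\partial M}$, consistent with the projective change of Proposition \ref{prop2.5}.

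Now compute $\nabla_\xi$ for $\xi\in\frak X_\partial(\barm)$, using $\nabla_\xi\eta=\nabla^\rho_\xi\eta-\frac{d\rho(\eta)}{\rho}\xi$ from the proof of Theorem \ref{thm2.7}.
\begin{itemize}
\item For $\eta$ with $d\rho(\eta)=0$, we get $\nabla_\xi\eta=\nabla^\rho_\xi\eta$. Its tangential part is the boundary connection. Its $\rho\nu$-component is the boundary value of $\frac1\rho d\rho(\nabla^\rho_\xi\eta)$, which by \eqref{fund-form2} equals $\Rho^\rho(\xi,\eta)-\Rho(\xi,\eta)$. By hypothesis this reduces to $\Rho^\rho(\xi,\eta)$ on the boundary.
\item For $\tau\rho\nu$, we get $(\nabla^\rho_\xi\tau)\rho\nu+\tau\nabla^\rho_\xi(\rho\nu)-\tau\frac{d\rho(\rho\nu)}{\rho}\xi$. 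Using condition (iii) of Definition \ref{def2.2}, $\nabla^\rho_\xi(\rho\nu)=\rho\nabla^\rho_\xi\nu+d\rho(\xi)\nu$ has vanishing $b$-boundary value up to a multiple of $\rho\nu$, to be absorbed. The projection to $T\partial M(-1)$ is then $-\tau\xi$.
\end{itemize}
These are exactly the BEG formulas for the normal tractor connection in the splitting of $\nabla^\rho$, up to sign conventions, provided that the restriction of $\Rho^\rho$ to $T\partial M\x T\partial M$ equals the Schouten tensor of the boundary connection.

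\emph{Main obstacle.} The key remaining step is this Gauss-type identity. I would first use Lemma \ref{lemma2.6}(3) and strong total geodesy to show that, for tangential $\xi,\eta,\ze$, the tangential part of $R^\rho(\xi,\eta)\ze$ restricts to the boundary curvature. Then in \eqref{def-Ric} the sum over $i\geq1$ gives the boundary Ricci tensor. The extra term $d\rho(R^\rho(\xi,\xi_0)\eta)$ must be expressed, again via \eqref{fund-form2}, in terms of $\Rho^\rho-\Rho$. The hypothesis that the boundary value of $\Rho$ vanishes then has to fix the mismatch between the normalization factor $\frac1n$ in $\partial M$ and $\frac1{n-1}$ there. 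If the bookkeeping instead leaves a multiple of $\Rho^\rho$, I would try absorbing it by rescaling the identification of the $\Cal E(-1)$ slot.
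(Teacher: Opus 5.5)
Your proposal is correct and is essentially the paper's proof. It uses the same reduction via Corollary \ref{cor2.6} and the same identification of densities through $\hat\nabla\si|_{\partial M}$; note, though, that wedging gives $\La^{n+1}T^*\barm|_{\partial M}\cong\La^nT^*\partial M\otimes\Cal E(-1)$, not $\otimes\,\Cal E(1)$, and only the former makes the weights match. It also uses the same $\rho$-splitting evaluated via $\nabla_\xi\eta=\nabla^\rho_\xi\eta-\frac{d\rho(\eta)}{\rho}\xi$ and \eqref{fund-form2}, and the same Gauss-type comparison of Schouten tensors. The only differences are your sign for $\Cal E(-1)\hookrightarrow T^b\barm(-1)|_{\partial M}$ (the paper uses $\tau\mapsto-\tau\rho\nu$ to land exactly on the standard tractor formula) and your separate treatment of the two slots.

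The computations you left open close as you hoped. For $d\rho(\xi)\equiv0$, the leftover $\rho\nu$-coefficient $\tau\,d\rho(\nabla^\rho_\xi\nu)=\tau\rho(\Rho^\rho-\Rho)(\xi,\nu)$ vanishes along $\partial M$, so nothing has to be absorbed. For the Gauss step, take the normal term of the Ricci trace as $d\rho(R^\rho(\nu,\xi)\zeta)$. This is the trace over the first slot, the convention for which $\Rho^\rho=\frac1n\Ric^\rho$ is the tensor in \eqref{fund-form2}; the order used in \eqref{def-Ric} flips this sign. After the bracket terms cancel, its boundary value is $\Rho^\rho(\xi,\zeta)-\Rho(\xi,\zeta)=\Rho^\rho(\xi,\zeta)$, so $\Ric^\rho_\partial=\frac{n-1}{n}\Ric^\rho$ and the Schouten tensors agree. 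No rescaling is needed, and none could help anyway, since rescaling the $\Cal E(-1)$ slot multiplies the $\tau\xi$ and $\Rho$ terms by inverse factors.
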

\begin{proof}
  First note that our assumptions imply that $\nabla$ has strongly totally geodesic
  boundary, so we indeed get a linear connection on $T^b\barm(-1)|_{\partial M}$
  induced by $\nabla$. We next observe that by expression \eqref{Tb-exact} from
  Lemma \ref{lem2.6a}, the bundle $T^b\barm(-1)|_{\partial M}$ fits into an exact
  sequence which looks like the sequence \eqref{std-sequ} for the standard tractor
  bundle. The only difference is that the $(-1)$-density bundle of $\partial M$ is
  replaced by the restriction $\Cal E(-1)|_{\partial M}$ of the $(-1)$-density bundle
  of $\barm$. (Note that the power in the definition of $(-1)$-densities as a root of
  the bundle of volume forms depends on the dimension). So we first prove that there
  is a natural way to identify these two bundles.

  Fix a non-zero density $\si\in\Ga(\Cal E(1)|_M)$ which is parallel
  for $\nabla$. By Proposition \ref{prop2.3}, this extends by $0$ to a
  smooth section of $\Cal E(1)\to\barm$ which is a defining density
  for $\partial M$. Taking any linear connection $\tilde\nabla$ on
  $\Cal E(1)$, the derivative $\tilde\nabla\si|_{\partial M}$ is a
  nowhere vanishing section of $T^*\partial M\otimes\Cal
  E(1)|_{\partial M}$. This is independent of the choice of
  $\tilde\nabla$ and its kernel in a point $x\in\partial M$ coincides
  with $T_x\partial M\subset T_x\barm$. Putting $n:=\dim(\partial M)$,
  taking the wedge product with this weighted one form induces an
  isomorphism $\Lambda^nT^*\partial M\to\Lambda^{n+1}T^*M\otimes\Cal
  E(1)$, and hence between the squares of this line bundles. But the
  square of the first bundle is the (intrinsic) density bundle $\Cal
  E_{\partial M}(-2n-2)$ while square of the second bundle is the
  restriction to $\partial M$ of $\Cal E(-2n-4)\otimes\Cal E(2)=\Cal
  E(-2n-2)$. Since this extends to powers, roots and duals, we can in
  particular identify $\Cal E(1)|_{\partial M}$ with the $\Cal
  E_{\partial M}(1)$, so we drop the subscript.

  Thus we have an exact sequence of the right form. In order to obtain the usual
  formula for the standard tractor connection, we have to make a small change, 
  however. We define the inclusion $\Cal E(-1)\hookrightarrow T^b\barm(-1)|_{\partial
    M}$ by sending a density $\tau$ to the boundary value of $-\tau\rho\nu$ where
  $d\rho(\nu)|_{\partial M}\equiv 1$.  From the proof of Lemma \ref{lem2.6a} we also
  see how a choice of local defining function $\rho$ gives rise to a splitting of the
  exact sequence. Given $\tilde\eta\in\Ga(T^b\barm(-1))$ the $(-1)$-density
  $-\frac{d\rho(\tilde\eta)}{\rho}$ admits a smooth extension to the boundary and its
  boundary value depends only on the boundary value of $\tilde\eta$, which is a
  section of $T^b\barm(-1)|_{\partial M}$. Hence we have constructed a bundle map
  $T^b\barm(-1)|_{\partial M}\to\Cal E(-1)$ (depending on the choice of $\rho$),
  which evidently splits the inclusion defined above.

  So it remains to compute the connection induced by $\nabla$ in such a splitting and
  we fix the defining function $\rho$ to determine the splitting. Then we take
  $\tilde\eta\in\Ga(T^b\barm(-1))$ and $\tilde\xi\in\frak X_{\partial}(\barm)$ and
  denote by $\xi\in\frak X(\partial M)$ the boundary value of the vector field
  $\tilde\xi$. Then we know that, in the splitting determined by $\rho$, the boundary
  values of $\tilde\eta$ corresponds to $(\eta,\tau)$, where $\eta$ is the boundary
  value of the vector field $\tilde\eta$ and $\tau$ is the boundary value of
  $-\frac{d\rho(\tilde\eta)}{\rho}$. Now we compute
  \begin{equation}\label{nabla}
    \nabla_{\tilde\xi}{\tilde\eta}=\nabla^\rho_{\tilde\xi}{\tilde\eta}-
    \frac{d\rho(\tilde\eta)}{\rho}\tilde\xi,
  \end{equation}
  and we have to determine the boundary value of this section of $T^b\barm(-1)$ in
  the splitting determined by $\rho$. In the first component, this simply is the
  boundary value as a vector field, which is just $\nabla^{\rho}_{\xi}\eta+\tau\xi$,
  where $\nabla^{\rho}$ also denotes the connection on $T\partial M(-1)$ induced by
  the connection $\nabla^{\rho}$ in the projective class. This agrees with the formula
  for the standard tractor connection in Section 3.2 of \cite{BEG}.

  To deal with the other component, we need one more observation. For $\tilde\xi$ and
  another vector field $\tilde\ze\in\frak X_{\partial}(\barm)$, we can apply the formula
  \eqref{fund-form2}, taking into account that $\Rho(\tilde\xi,\tilde\eta)=\Cal
  O(\rho)$ by assumption, to conclude that
  $$
  d\rho(\nabla^\rho_{\tilde\xi}\tilde\zeta)=\tilde\xi\cdot d\rho(\tilde\ze)+
  \rho\Rho^\rho(\tilde\xi,\tilde\zeta)+\Cal O(\rho^2).
  $$
  This equation can be twisted by a density without problems, one only has to replace
  the derivative in the first term of the right hand side by
  $\nabla^\rho_{\tilde\xi}$, so
$$
d\rho(\nabla^\rho_{\tilde\xi}\tilde\eta)=\nabla_{\tilde\xi}d\rho(\tilde\eta)+
\rho\Rho^\rho(\tilde\xi,\tilde\eta)+\Cal O(\rho^2). 
$$ Taking this into account, we see that applying
$\frac{-d\rho}{\rho}$ to \eqref{nabla} and ignoring terms that are
  $\Cal O(\rho)$, we obtain 
$$
-\frac{1}{\rho}\nabla^\rho_{\tilde\xi}d\rho(\tilde\eta)-\Rho^\rho(\tilde\xi,\tilde\eta)+
\frac{1}{\rho^2}d\rho(\tilde\xi)d\rho(\tilde\eta),
$$
for the right hand side.
The first and last terms here add up to
$-\nabla^\rho_{\tilde\xi}\frac{d\rho(\tilde\eta)}{\rho}$, so the boundary value of
the whole expression is $\nabla_\xi\tau-\Rho^\rho(\xi,\eta)$. This matches the second
component of the formula for the tractor connection in \cite{BEG}, apart from the
fact that $\Rho^\rho$ is the boundary value of the Schouten tensor of the connection
$\nabla^\rho$ on $\barm$ rather then the Schouten tensor of the induced connection on
the boundary, so it remains to show that these agree.

 Since the boundary is totally geodesic, the curvature of the
  restriction of $\nabla^\rho$ to $\partial M$ is obtained by inserting vector fields
  in $\frak X_\partial (\barm)$ into the curvature of $\nabla^\rho$ and then taking
  boundary values. However, passing to the Ricci curvature, one has to take traces
over different spaces. Fixing $\rho$, we choose a local vector field $\nu$ such that
$d\rho(\nu)\equiv 1$. Then we choose a local frame for $T\partial M$ along $\partial
M$ and extend the elements to vector fields which insert trivially into $d\rho$
to obtain a local smooth frame for $T\barm$. This has the property that the dual
coframe consists of $d\rho$ and of forms that restrict to the dual coframe of the
initial frame for $T\partial M$. For vector fields $\tilde\xi,\tilde\zeta\in\frak
X_{\partial}(\barm)$ we denote by $\xi,\zeta\in\frak X(\partial M)$ their
restrictions to $\partial M$. Denoting by $\Ric^\rho_\partial$ the Ricci curvature of
the boundary connection and by $\Ric^\rho$ the boundary value of the Ricci curvature
of the interior connection, the difference
$\Ric^\rho(\xi,\zeta)-\Ric^\rho_\partial(\xi,\zeta)$ clearly is the boundary value of
$d\rho(R(\nu,\tilde\xi)(\tilde\zeta))$.

Now we can apply $d\rho$ to the defining equation for
$R(\nu,\tilde\xi)(\tilde\zeta)$, then use \eqref{fund-form2} and drop terms that are
$\Cal O(\rho)$ to get
$$
\nu\cdot d\rho(\nabla^\rho_{\tilde\xi}\tilde\zeta)-\tilde\xi\cdot
d\rho(\nabla^\rho_\nu\tilde\zeta)-[\nu,\tilde\xi]\cdot d\rho(\tilde\zeta). 
$$
Next, for the first two summands, we can again use \eqref{fund-form2}, but have to carry
out the differentiation before dropping $\Cal O(\rho)$-terms. Up to terms that are
obviously $\Cal O(\rho)$, this gives
\begin{multline*}
\nu\cdot\tilde\xi\cdot d\rho(\tilde\zeta)+
d\rho(\nu)(\Rho^\rho(\tilde\xi,\tilde\ze)-\Rho(\tilde\xi,\tilde\ze))\\-
\tilde\xi\cdot\nu\cdot d\rho(\tilde\zeta)-
d\rho(\tilde\xi)(\Rho^\rho(\nu,\tilde\ze)-\Rho(\nu,\tilde\ze))-
[\nu,\tilde\xi]\cdot d\rho(\tilde\zeta). 
\end{multline*}
Of course the three derivative terms cancel by definition of the Lie bracket, and
$d\rho(\tilde\xi)$ is $\Cal O(\rho)$, so this part can be dropped, too. Since
$d\rho(\nu)\equiv 1$ and by our assumption $\Rho(\tilde\xi,\tilde\ze)$ vanishes along
the boundary, we conclude that
$$
\Ric^\rho(\xi,\zeta)-\Ric^\rho_\partial(\xi,\zeta)=\Rho^\rho(\xi,\zeta)=\tfrac{1}{n}\Ric^\rho(\xi,\zeta)
$$
and hence $\Ric^\rho_\partial(\xi,\zeta)=\frac{n-1}{n}\Ric^\rho(\xi,\zeta)$ which
exactly implies that the resulting Schouten tensors agree. 
\end{proof}

\subsection{Relation to interior tractors}\label{2.9} While considerations about
tractors inspired many of  the developments in this article, it turned out that
technically the weighted b-tangent bundle offers a much simpler
approach that leads to stronger results. Thus we will keep the
discussion of tractors rather short, but we believe that the relation
of tractors to the b-tangent bundle is of independent interest.

For $\barm=M\cup\partial M$, a projectively pre-compact affine
connection $\nabla$ on $M$ defines a projective structure on $M$,
which makes tools like tractor bundles and tractor connections
available. As mentioned in Section \ref{2.3}, one cannot expect that
this projective structure admits a smooth extension to all of
$\barm$. However, we have already seen there that some of these tools
\textit{do} extend to the boundary. In particular, the standard
cotractor bundle can be defined as $J^1\Cal E(1)$ on all of $\barm$
and the jet exact sequence shows that it has the right composition
structure on all of $\barm$. Moreover, by Lemma \ref{lem2.4}, for the
linear connection $\nabla^\rho$ on $TM$ associated to a defining
function $\rho$, the induced connection on $\Cal E(1)$ extends
smoothly to the boundary. Since such a connection splits the jet exact
sequence, also the splittings of the standard cotractor bundle coming
from a defining function are smooth up to the boundary.

Since general tractor bundles can be obtained by tensorial constructions from the
standard cotractor bundle, all these bundles and their composition series and
splittings extend to all of $\barm$. In particular, we obtain an analog of the
standard tractor bundle on all of $\barm$ as $\Cal T\barm:=(J^1\Cal E(1))^*$ and this
comes with a short exact sequence
$$
0\to\Cal E(-1)\to \Cal T\barm \to T\barm(-1)\to 0
$$
and a splitting of this sequence associated to any defining function $\rho$. In
particular, any section of $\Cal T\barm$ projects to a weighted vector field of
weight $-1$. Hence we can define a subsheaf $\Ga_{\partial}(\Cal
T\barm)\subset\Ga(\Cal T\barm)$ as by requiring that this projection is tangent to
$\partial M$ along $\partial M$. As for the tangent bundle, this gives rise to a
b-tractor bundle $\Cal T^b\barm$, which comes with an exact sequence as above, but
with $T^b\barm(-1)$ replacing $T\barm(-1)$.

Next, we can consider the formula for the standard tractor connection
$\nabla^{\Cal T}$ in the splittings determined by a defining function,
which is mentioned in the proof of Theorem \ref{thm2.8}, see
\cite{BEG} for details. Apart from tensorial operations, this only
involves the connection $\nabla^\rho$ and its Schouten tensor
$\Rho^{\rho}$. Assuming that $\nabla$ has totally geodesic boundary,
we can apply this formula over $M$, to conclude that for $\xi\in\frak
X(\barm)$ and $s\in\Ga(\Cal T\barm)$, the section $\nabla^{\Cal T}_\xi
s$ of $\Cal T\barm|_M$ admits a smooth extension to the boundary if
$\xi\in\frak X_{\partial}(\barm)$ or $s\in\Ga(\Cal T^b\barm)$. Hence
the tractor connection restricts to well defined operations on $\frak
X_{\partial}(\barm)\x\Ga(\Cal T\barm)$ and $\frak X(\barm)\x\Ga(\Cal
T^b\barm)$. However, for the second operation, the values do not lie
in $\Ga(\Cal T^b\barm)$, so it seems that one does \textit{not} get a
connection on $\Cal T^b\barm$.

As we have seen in Proposition \ref{prop2.3}, a nonzero section $\si\in\Ga(\Cal
E(1)|_M)$ that is parallel for $\nabla$ extends by zero to a defining density for
$\partial M$ and hence determines a global, nowhere vanishing section $I=j^1\si$ of
$J^1(\Cal E(1))$. This spans a smooth line subbundle in $J^1(\Cal E)$, whose
annihilator is a smooth hyperplane bundle $I^\perp\subset\Cal T\barm$. Since $\si$ is
a defining density, it follows readily that any section of $I^\perp$ projects onto a
weighted vector field that is tangent to $\partial M$ along $\partial M$. This shows
that we can actually view $I^\perp$ as a subbundle of $\Cal T^b\barm$.

The applications of tractors in \cite{Proj-comp} (where the projective structure
admits a smooth extension to the boundary), which are relevant here, require that $I$
is parallel for the tractor connection. This is equivalent to $\nabla$ being Ricci
flat, so in particular, the boundary is strongly totally geodesic. Since $I$ is
parallel, also the subbundle $I^\perp$ is parallel and hence inherits a connection,
and it turns out that $I^\perp$ and this connection restrict to the projective
standard tractor bundle on $\partial M$ and its normal tractor connection. This
approach can be generalized directly to the projectively pre-compact case via the
operator $\frak X_{\partial}(\barm)\x\Ga(J^1\Cal E(1))\to\Ga(J^1\Cal E(1))$ obtained
by dualizing the corresponding operation on $\Cal T\barm$.

However, this can also be easily deduced from Theorem \ref{thm2.8} (which works under
much weaker assumptions) as follows. One easily verifies that the projection $\Cal
T^b\barm\to T^b\barm(-1)$ restricts to an isomorphism $I^\perp\to T^b\barm(-1)$,
which over $M$ is just the inverse of the splitting induced by the connection
$\nabla$. Moreover, if $\nabla$ is Ricci-flat then, in this splitting, the normal
tractor connection is simply given by $\nabla$, which completes the argument. 

\section{Projectively pre-compact metrics}\label{3}
We now turn to the case of projectively pre-compact pseudo-Riemannian metrics, which
is of interest for applications to general relativity. The main topic here is the
 relation of projective pre-compactness to asymptotic forms of the
  metric. One of these forms is only available locally around boundary points in a
  dense open subset and plays an important role in general relativity. The second
  asymptotic form is available everywhere and to our knowledge is new.

\subsection{An asymptotic form}\label{3.1}
In our usual setting $\barm=M\cup\partial M$, we want to study a certain asymptotic
form for a pseudo-Riemannian metric $g$ on $M$. The basic setup is that for some
function $C$, with properties to be specified later, and an appropriately chosen local
defining function $\rho$, we assume that
\begin{equation}
  \label{h-def}
h:=\rho^2(g-\frac{C}{\rho^2}d\rho^2)  
\end{equation}
admits a smooth extension to the boundary such that the restriction of the boundary
value to $T\partial M\x T\partial M$ is non-degenerate. This means that then $g$
has the asymptotic form
\begin{equation}
  \label{g-asymp}
  g=\frac{C}{\rho^4}d\rho^2+\frac{h}{\rho^2}. 
\end{equation}
Observe that such a form \textit{does} depend on the choice of the defining function
$\rho$. For $\rho=e^{-f}\hat\rho$ we get $d\rho=e^{-f}(d\hat\rho-\rho df)$ and
inserting this, we see that we will get terms involving the symmetric product of
$d\hat\rho$ and $df$, which are $\Cal O(\rho^{-3})$ and cannot be absorbed into $C$
unless $df=d\hat\rho+\Cal O(\rho)$. On the other hand, a constant rescaling of $\rho$
leads to a constant rescaling of $h$ and $C$, so one may absorb constant multiples of
$C$ into a change of defining function.

The basic assumption will be that $C$ equals a non-zero constant along
the boundary, and this constant can be assumed to be $\pm
1$. Moreover, it has been shown in \cite{Proj-comp} that if $C=\pm
1+\Cal O(\rho^2)$ and the asymptotic form \eqref{g-asymp} is available
locally around each boundary point, then $g$ is projectively compact
of order $1$. It has also been shown in \cite{Proj-comp}
that for any pseudo-Riemannian metric $g$ which is projectively
compact of order $1$, such an asymptotic form is available locally
around appropriate parts of the boundary, which form a dense open
subset of the boundary. For applications in general relativity it turns out to
be of fundamental importance to allow that $C=\pm 1+\Cal O(\rho)$ in
order to allow for  a non-trivial mass aspect, see
\cite{Ashtekar-Romano}.  We now show that such weaker fall-off conditions on
the non-constant part of $C$ imply projective pre-compactness.

\begin{thm}\label{thm3.1}
  For $\barm=M\cup\partial M$, suppose that $g$ is a pseudo-Riemannian
  metric on $M$ such that locally around each $x\in\partial M$, there
  is a defining function $\rho$ and a nowhere vanishing function $C$
  such that the section $h$ defined in \eqref{h-def} admits a smooth
  extension to the boundary, whose boundary value is non-degenerate on
  $T\partial M\x T\partial M$. Suppose further that the function $C$
  has the property that for each vector field $\ze$ that is smooth up
  to the boundary and such that $d\rho(\ze)\equiv 0$, we have
  $\ze\cdot C=O(\rho)$. Then $g$ is projectively pre-compact with
  totally geodesic boundary. If we even have $\ze\cdot C=\Cal
  O(\rho^2)$ for any $\ze$ as above, then $g$ is projectively compact
  (of order $1$).
\end{thm}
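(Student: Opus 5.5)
We plan to verify the three conditions of Definition \ref{def2.2} directly from the Koszul formula, then identify the boundary value of $\Phi|_{T\partial M\x T\partial M}$ via Theorem \ref{thm2.4}. Fix $x\in\partial M$, the defining function $\rho$ and the function $C$ from the statement. Choose a local frame $\nu,\ze_1,\dots,\ze_n$ of $T\barm$ with $d\rho(\nu)\equiv 1$ and $d\rho(\ze_i)\equiv 0$. Since all conditions are tensorial up to harmless terms, it suffices to test $\nabla^\rho_\xi\eta$ on frame fields, with $\xi,\eta\in\{\nu,\ze_i,\rho\nu\}$ as needed. Write $g=\rho^{-4}C\,d\rho^2+\rho^{-2}h$. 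Along the boundary the inverse metric has $g^{-1}(d\rho,d\rho)=\rho^4/C+\Cal O(\rho^5)$, and its mixed and tangential parts are $\rho^2$ times forms smooth up to the boundary, with the tangential part non-degenerate by the assumption on $h$. Hence a vector field $Y$ on $M$ is smooth up to the boundary as soon as $g(Y,\nu)=\Cal O(\rho^{-4})$ and $g(Y,\ze_i)=\Cal O(\rho^{-2})$, with a suitable compatibility between the leading coefficients. We will make this compatibility precise by writing $Y=a\nu+\sum b^i\ze_i$ and solving the resulting $2\times 2$ block system.

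Next we compute $2g(\nabla_\xi\eta,\ze)$ by the Koszul formula for frame fields. The dangerous terms come from differentiating the coefficient $\rho^{-4}C$ and $\rho^{-2}$. In a derivative $\xi\cdot(\rho^{-4}C)$ the piece $-4\rho^{-5}C\,d\rho(\xi)$, and in $\xi\cdot\rho^{-2}$ the piece $-2\rho^{-3}d\rho(\xi)$, have exactly the shape $g$-dual to the correction $\frac{d\rho(\xi)}{\rho}\eta+\frac{d\rho(\eta)}{\rho}\xi$ defining $\nabla^\rho$. The key step is therefore to check that, in $g(\nabla^\rho_\xi\eta,\ze)$, all $\rho^{-5}$ and $\rho^{-3}$ singularities coming from differentiating powers of $\rho$ cancel. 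This cancellation is the reason the order is $1$. After it, the remaining singular contributions come from $\rho^{-4}dC$. These are:
\begin{itemize}
\item[(a)] $\rho^{-4}(\nu\cdot C)\,d\rho(\xi)d\rho(\eta)$ in the $\nu$-slot, which is within the allowed $\Cal O(\rho^{-4})$;
\item[(b)] the term $-\tfrac12\rho^{-4}(\ze_i\cdot C)\,d\rho(\xi)d\rho(\eta)$ in the $\ze_i$-slot.
\end{itemize}
By hypothesis, $\ze_i\cdot C=\Cal O(\rho)$, so term (b) is $\Cal O(\rho^{-3})$. Inverting with the tangential block of size $\rho^2$, it produces a tangential contribution of order $\rho^{-1}$ to $\nabla^\rho_\xi\eta$, proportional to $d\rho(\xi)d\rho(\eta)$.

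This yields the three conditions:
\begin{itemize}
\item[(ii)] $\rho\nabla^\rho_\xi\eta$ is smooth up to the boundary.
\item[(i)] $d\rho(\nabla^\rho_\xi\eta)$ is smooth, since the singular part is tangential.
\item[(iii)] $\nabla^\rho_\xi\eta$ is smooth if $\xi$ or $\eta$ lies in $\frak X_\partial$, since then $d\rho(\xi)d\rho(\eta)=\Cal O(\rho)$.
\end{itemize}
If even $\ze_i\cdot C=\Cal O(\rho^2)$, the $\rho^{-1}$ term disappears altogether, which gives projective compactness. We expect the main obstacle to be the bookkeeping in this inversion. In particular, we must ensure that the mixed terms $h(\nu,\ze_i)$ and the derivatives of $h$ do not generate extra $\rho^{-1}$ normal components. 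For this we will first use the freedom to replace $\nu$ by $\nu$ plus a tangential field, so that $h(\nu,\ze_i)|_{\partial M}=0$; this is possible because $h|_{T\partial M\x T\partial M}$ is non-degenerate.

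Finally, for the totally geodesic property, Theorem \ref{thm2.4} shows that we need $d\rho(\nabla^\rho_\xi\eta)=\Cal O(\rho)$ for $\xi,\eta\in\frak X_\partial(\barm)$. It suffices to take $\xi,\eta\in\{\ze_i,\rho\nu\}$. We will read off $d\rho(\nabla^\rho_\xi\eta)$ from the $\nu$-slot via the leading coefficient $\rho^4/C$, using that $C$ is nowhere vanishing. For $\xi=\ze_i$, $\eta=\ze_j$, the relevant Koszul terms are $\rho^{-2}$ times derivatives of $h$, plus the modification terms. These give $g(\nabla^\rho_{\ze_i}\ze_j,\nu)=\Cal O(\rho^{-3})$, so $d\rho(\nabla^\rho_{\ze_i}\ze_j)=\Cal O(\rho)$. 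The cases involving $\rho\nu$ then follow from the Leibniz rule, together with the smoothness established in (iii).
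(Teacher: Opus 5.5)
Your proposal is correct and is essentially the paper's argument. It uses the Koszul formula for $2g(\nabla^\rho_\xi\eta,\ze)$, cancels the $\rho^{-5}$ terms against the projective correction, isolates $-\rho^{-4}(\ze\cdot C)\,d\rho(\xi)d\rho(\eta)$ as the only dangerous term in slots $\ze\in\ker(d\rho)$, and inverts using a transversal field $h$-orthogonal to $\ker(d\rho)$. The paper imposes that orthogonality identically rather than only along $\partial M$, and works with arbitrary $\xi,\eta$, using $d\rho([\xi,\ze])=-\ze\cdot d\rho(\xi)$ to cancel bracket terms that vanish outright in your frame-field setting.

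One small correction: in the $\nu$-slot not all $\rho^{-3}$ terms cancel, since $-\nu\cdot g(\xi,\eta)$ leaves $2\rho^{-3}h(\xi,\eta)$. This is harmless against the $\Cal O(\rho^{-4})$ tolerance there, and it is exactly the $\Cal O(\rho^{-3})$ term you use in the totally geodesic step.
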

\begin{proof}
  Apart from the part asserting that the boundary is totally geodesic,
  this follows the analogous proof of Theorem 2.6 of \cite{Proj-comp},
  which, in particular, proves the last statement and actually implies
  parts of our more general result. Nevertheless, we repeat parts of
  that proof to ensure that the stronger assumptions on $C$ imposed in
  \cite{Proj-comp} were not used implicitly. First
  observe that it suffices to verify the defining conditions for
  projective pre-compactness for the connection $\nabla^\rho$
  associated to the defining function for which the asymptotic form is
  available. In particular, we have the tensor field $h$ at our
  disposal, and restricting to a sufficiently small neighborhood of
  the boundary, we may assume that $h$ is non-degenerate on
  $\ker(d\rho)\x\ker(d\rho)$ everywhere. As in \cite{Proj-comp}, this
  implies that there is an analog of a Reeb field,
    i.e.\ a vector field $\ze_0$ such that $d\rho(\ze_0)\equiv 1$ and
  such that $h(\ze_0,\ze)\equiv 0$ for $\ze\in\ker(d\rho)$.

  We can extend $\ze_0$ to a local frame by vector fields $\ze_i$ in $\ker(d\rho)$
  and then consider the frame $\tilde\ze_i$ for $\ker(d\rho)$ dual to $\{\ze_i\}$
   with respect to $h|_{\ker(d\rho)}$. For a vector field $\xi$, we
  then get $\xi=d\rho(\xi)\ze_0+\sum_ih(\xi,\tilde\ze_i)\ze_i$, so to prove that
  $\xi$ admits a smooth extension to the boundary, it suffices to prove that the
  functions $d\rho(\xi)$ and $h(\xi,\ze)$ for $\ze\in\ker(d\rho)$ admit smooth
  extensions.

  Now the asymptotic form \eqref{g-asymp} readily implies that on $M$, we get
  $$
  g(\eta,\ze_0)=d\rho(\eta)\left(\frac{C}{\rho^4}+\frac{h(\ze_0,\ze_0)}{\rho^2}\right),   
  $$
  so to show that $d\rho(\xi)$ admits a smooth extension, it suffices to prove that
  $\rho^4g(\xi,\ze_0)$ is smooth up to the boundary. Likewise, if $\ze$ lies in
  $\ker(d\rho)$, the asymptotic form shows that in the interior
  $g(\xi,\ze)=\frac{1}{\rho^2}h(\xi,\ze)$, so to show that $h(\xi,\ze)$ is smooth up
  to the boundary, it suffices to prove that $\rho^2g(\xi,\ze)$ has this property.

  To apply this to a vector field of the form $\nabla^\rho_\xi\eta$, one uses the
  Koszul formula to express $\nabla_\xi\eta$ to conclude that, on $M$,
  $2g(\nabla^\rho_\xi\eta,\ze)$ can, for any vector field $\ze$, be written as
  \begin{equation}    \label{Koszul}
    \begin{aligned}
     &\xi\cdot g(\eta,\zeta)-\ze\cdot g(\xi,\eta)+\eta\cdot
     g(\xi,\ze)\\ +&g([\xi,\eta],\ze)-g([\xi,\ze],\eta)-g([\eta,\ze],\xi)\\
     +&\tfrac{2}{\rho}d\rho(\xi)g(\eta,\ze)+\tfrac{2}{\rho}d\rho(\eta)g(\xi,\ze). 
    \end{aligned}
  \end{equation}
  
  Now the asymptotic form for $g$ readily implies that we can write $\xi\cdot
  g(\eta,\zeta)$ as
  $$
  -4\rho^{-5}Cd\rho(\xi)d\rho(\eta)d\rho(\zeta)+\rho^{-4}\xi\cdot(Cd\rho(\eta)d\rho(\zeta))+\Cal
  O(\rho^{-3}). 
  $$
  Putting $\ze=\ze_0$ and applying this to all three terms in the first line in
  \eqref{Koszul}, the $\rho^{-5}$-terms cancel with the last line of \eqref{Koszul}
  and we can write \eqref{Koszul} in this case as
\begin{equation}    \label{Koszul2}
  \begin{aligned}
    \rho^{-4}&\left(\xi\cdot
    (Cd\rho(\eta))-\ze_0\cdot(Cd\rho(\xi)d\rho(\eta)+\eta\cdot (Cd\rho(\xi))\right)+\\
	  +&g([\xi,\eta],\ze)-g([\xi,\ze],\eta)-g([\eta,\ze],\xi)+\Cal O(\rho^{-3}).
    \end{aligned}
\end{equation}
Obviously, all terms in this formula are $\Cal O(\rho^{-4})$, so we conclude
$d\rho(\nabla^\rho_{\xi}\eta)$ always admits a smooth extension to the boundary
(regardless of any fall off conditions on derivatives of $C$).

Assuming, in addition, that $\xi,\eta\in\frak X_{\partial}(\barm)$, we have
$Cd\rho(\eta)=\Cal O(\rho)$ and this remains true after differentiation
by $\xi$ and similarly for the last term in the first line of
\eqref{Koszul2}. Likewise, $Cd\rho(\xi)d\rho(\eta)=\Cal O(\rho^2)$ and
differentiating in the direction $\ze_0$, the result is $\Cal O(\rho)$, so
the whole first line of \eqref{Koszul2} is $\Cal O(\rho^{-3})$ in this
case. For $\xi,\eta\in\frak X_\partial(\barm)$, we get
$[\xi,\eta]\in\frak X_\partial(\barm)$ and so, in each of the terms in the
second line of \eqref{Koszul2}, one of the vector fields that is
inserted into $g$ lies in $X_\partial(\barm)$. By the asymptotic form
of $g$, all these terms thus also are $\Cal O(\rho^{-3})$, so for
$\xi,\eta\in\frak X_\partial(\barm)$ we see that
$d\rho(\nabla^{\rho}_\xi\eta)$ is $\Cal O(\rho)$, so the boundary is
totally geodesic.

  We next analyze \eqref{Koszul} in the case that $d\rho(\ze)\equiv 0$. Then
  $g(\eta,\ze)=\frac1{\rho^2}h(\eta,\ze)$ for any $\ze\in\frak X(\barm)$ and
  differentiating this in the direction $\xi$, we obtain
  $$
  -2\rho^{-3}d\rho(\xi)h(\eta,\zeta)+\Cal O(\rho^{-2}).
  $$
  Applying this to the first
  and the last term in the first line of \eqref{Koszul}, there is again a cancellation
  with the terms in the last line, while the first summand in the second line is
  $\Cal O(\rho^{-2})$. Overall, we conclude that, for the case of $d\rho(\ze)\equiv 0$, 
  \eqref{Koszul} can be written as
  \begin{equation}\label{Koszul3}
-\ze\cdot g(\xi,\eta)-g([\xi,\zeta],\eta)-g([\eta,\zeta],\xi)+\Cal O(\rho^{-2}). 
  \end{equation}
  Now $g([\xi,\ze],\eta)=\tfrac{C}{\rho^4}d\rho([\xi,\ze])d\rho(\eta)+\Cal
  O(\rho^{-2})$, and $dd\rho(\xi,\ze)=0$
  and $d\rho(\ze)=0$ imply $d\rho([\xi,\ze])=-\ze\cdot d\rho(\xi)$, and like-wise for
  the term with $\xi$ and $\eta$ exchanged. On the other hand, the
  first term in \eqref{Koszul3} gives
  $$
  -\ze\cdot\left(\rho^{-4}Cd\rho(\xi)d\rho(\eta)+\Cal O(\rho^{-2})\right).
  $$
  By assumption $\ze\cdot\rho\equiv 0$, so we may pull out the factor $\rho^{-4}$ in
  the first summand, while the second summand will remain to be $\Cal O(\rho^{-2})$
  after differentiation by $\ze$. Moreover, the the terms in which $\ze$
  differentiates $d\rho(\xi)$ respectively $d\rho(\eta)$ cancel with the contributions
  from above. Hence we conclude that if $d\rho(\ze)\equiv 0$, then  without any
  assumption on $\xi$ and $\eta$, \eqref{Koszul3} equals
  
  $$
-\rho^{-4}(\ze\cdot C)d\rho(\xi)d\rho(\eta)+\Cal O(\rho^{-2}). 
$$
Now by assumption $\ze\cdot C=\Cal O(\rho)$, so this sum is always $\Cal
O(\rho^{-3})$ and it is $\Cal O(\rho^{-2})$ if either $\xi$ or $\eta$ lies in $\frak
X_{\partial}(\barm)$. Hence $h(\rho\nabla^\rho_\xi\eta,\ze)=\rho
h(\nabla^\rho_\xi\eta,\ze)$ is smooth up to the boundary and if either $\xi$ or
$\eta$ lies in $\frak X_\partial(\barm)$ then $h(\nabla^\rho_\xi\eta,\ze)$ is smooth
up to the boundary, which completes the proof that $\nabla^\rho$ is projectively
pre-compact.

If we even have $\ze\cdot C=\Cal O(\rho^2)$, then the whole expression is $\Cal
O(\rho^{-2})$ for any $\xi$ and $\eta$, which shows that $h(\nabla^\rho_\xi\eta,\ze)$
is always smooth up to the boundary and hence $g$ is projectively compact.
\end{proof}

\subsection{The boundary structure}\label{3.2}
For $\barm=M\cup\partial M$ consider a pseudo-Riemannian metric $g$ on $M$ which
admits an asymptotic form \eqref{g-asymp} and let $\Ric$ be the Ricci curvature of
$g$, which is just a constant multiple of the Schouten tensor of the Levi-Civita
connection $\nabla$ of $g$. Then by Theorems \ref{thm3.1} and \ref{thm2.6} and
Corollary \ref{cor2.6}, we know that $\rho\Ric$ admits a smooth extension to the
boundary and $\Ric(\xi,\eta)$ itself extends smoothly if both $\xi$ and $\eta$ lie
in $\frak X_{\partial}(\barm)$.

To proceed towards a converse of Theorem \ref{thm3.1}, we have to
impose a slightly stronger assumption on the Ricci curvature of $g$ to
be able to apply the results form Section \ref{2}. Our standing
assumption from now on will be the following.

\begin{definition}\label{def3.2}
For $\barm=M\cup\partial M$, we call a projectively pre-compact pseudo-Riemannian
metric $g$ on $M$ with Ricci curvature $\Ric$ \textit{asymptotically tangentially
  Ricci flat} if and only if $\Ric(\xi,\eta)$ admits a smooth extension to the
boundary if one of the fields lies in $\frak X_\partial(\barm)$ and vanishes along
the boundary if both fields lie in $\frak X_{\partial}(\barm)$.
\end{definition}
Observe that this property implies that the boundary is strongly totally geodesic, so
we can apply all the developments of Section \ref{2} to the Levi-Civita connection
$\nabla$ of $g$. Our first result provides a strong additional geometric structure on
the boundary, which, even in the projectively compact case, was only known under the
assumption that $g$ is Ricci-flat. Note that as a density $\si\in\Ga(\Cal E(1))$
that is parallel for $\nabla$ over $M$, we can take the appropriate power of the
volume density of $g$ here. We call this the {\em volume scale}.

\begin{thm}\label{thm3.2}
For $\barm=M\cup\partial M$ let $g$ be a projectively pre-compact
pseudo-Riemannian metric of signature $(p,q)$ on $M$, which is
asymptotically tangentially Ricci flat, and let $\si\in\Ga(\Cal E(1))$
denote the volume scale.
Then the smooth section
$\si^2g\in\Ga(S^2T^*M(2))$ admits a smooth extension to a section of
$S^2(T^b\barm)^*(2)$ over all of $\barm$.

Moreover, the boundary value of this extension is parallel for the connection induced
by $\nabla$ and non-degenerate as a bundle metric on $T^b\barm(-1)|_{\partial
  M}$. Hence it provides a reduction of the projective holonomy of the projective
structure on $\partial M$ obtained in Proposition \ref{prop2.5} to the group
$SO(p,q)\subset SL(n+1,\Bbb R)$.
\end{thm}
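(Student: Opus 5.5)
We work with $\ps:=\si^2g$, a section of $S^2T^*M(2)$ over $M$, and first observe that it is parallel for $\nabla$ on $M$, since both $g$ and $\si$ are $\nabla$-parallel. Theorem \ref{thm2.7} extends $\nabla$ to a connection on $T^b\barm(-1)$, so the strategy is: prove that $\ps$ extends as a section of $S^2(T^b\barm)^*(2)$; obtain parallelity of the extension (hence of its boundary value) by continuity; and establish non-degeneracy at the boundary by a separate argument.

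\textbf{Extension.} By the description of $S^2(T^b\barm)^*$ in Section \ref{2.6a}, we must show two things.
\begin{itemize}
\item $\rho^2\ps(\xi,\eta)$ is smooth up to the boundary for all $\xi,\eta\in\frak X(\barm)$.
\item $\ps(\xi,\eta)$ is smooth up to the boundary whenever $\xi$ or $\eta$ lies in $\frak X_\partial(\barm)$. For the $(-1)$-weighted version it is cleanest to test on sections $\eta_1,\eta_2\in\Ga(T^b\barm(-1))$ and show that $g(\eta_1,\eta_2)$ extends.
\end{itemize}
The key idea is to use parallelity rather than direct asymptotics. Fix $\xi\in\frak X(\barm)$. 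Then
$$\xi\cdot g(\eta_1,\eta_2)=g(\nabla_\xi\eta_1,\eta_2)+g(\eta_1,\nabla_\xi\eta_2),$$
and by Theorem \ref{thm2.7} the sections $\nabla_\xi\eta_i$ are again smooth sections of $T^b\barm(-1)$. So the matrix of functions $G_{ij}:=g(e_i,e_j)$, for a local frame $\{e_i\}$ of $T^b\barm(-1)$, satisfies a linear ODE system $\xi\cdot G=A_\xi^tG+GA_\xi$ whose coefficients $A_\xi$ are smooth up to the boundary. Taking $\xi$ transversal to the boundary, e.g.\ $\xi=\partial_\rho$ in a collar, integrating from an interior hypersurface $\rho=\epsilon$ shows that $G$ extends continuously, and in fact smoothly, to $\rho=0$, with $G$ satisfying the same linear ODE, so smooth dependence on parameters gives smoothness. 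The extension $\tilde\ps$ is then automatically $\nabla$-parallel on all of $\barm$ by continuity. In particular its boundary value is parallel for the induced connection along $\partial M$, which by Theorem \ref{thm2.8} is the normal tractor connection, since asymptotic tangential Ricci flatness is exactly the hypothesis of that theorem.

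\textbf{Non-degeneracy.} This is the main point. Solutions of a linear ODE $G'=A^tG+GA$ have $\det G$ satisfying $(\det G)'=2\operatorname{tr}(A)\det G$. Hence $\det G$ is the exponential of an integral of a function smooth up to the boundary, times $\det G(\epsilon)\neq0$, and so it stays nonzero at $\rho=0$. Thus $\tilde\ps$ is non-degenerate on all of $\barm$, including the boundary, and its signature is constant, equal to $(p,q)$ as on $M$. The hard part I expect is to make sure that $\si$ has the right weight so that $T^b\barm(-1)$ pairs correctly with weight $2$; this follows from $S^2(T^b\barm(-1))^*=S^2(T^b\barm)^*(2)$.

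\textbf{Holonomy reduction.} A parallel non-degenerate metric of signature $(p,q)$ on the standard tractor bundle, which carries a parallel volume form, reduces the holonomy of the normal tractor connection to $SO(p,q)$.
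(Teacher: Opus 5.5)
Your proposal is correct and is essentially the paper's argument. The paper also extends $\si^2g$ by parallel transport for the connection on $T^b\barm(-1)$ from Theorem \ref{thm2.7}, and your transversal linear ODE is exactly that transport written in a frame. For non-degeneracy and signature the paper instead transports a hypothetical kernel element from a boundary point back into $M$, where you use Liouville's formula $(\det G)'=2\operatorname{tr}(A)\det G$; the two are equivalent.

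There are two small slips, neither of which affects the argument. In your second bullet the condition should require \emph{both} entries to lie in $\frak X_\partial(\barm)$: with ``or'' it would force $\rho^2\ps(\nu,\nu)$ to vanish along $\partial M$, which fails over $\partial M_\pm$. Also, $G_{ij}$ should be $\ps(e_i,e_j)$ rather than $g(e_i,e_j)$, so that the entries are functions. Your frame-based argument in fact proves the correct extension condition.
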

\begin{proof}
Since $g\in\Ga(S^2T^*M)$ and $\si\in\Ga(\Cal E(1))$ are parallel for the connection
induced by $\nabla$, so is $\si^2g\in\Ga(S^2T^*M(2))$. But we can view this as a
section of $S^2(T^b\barm)^*(2)$ defined over $M$. By Theorem \ref{thm2.7}, $\nabla$
smoothly extends to a linear connection on $T^b\barm(-1)$ which which in turn induces
a linear connection on the symmetric square of the dual, i.e.~on
$S^2(T^b\barm)^*(2)$. Of course, $\si^2g$ is parallel for this connection over $M$,
so it extends by parallel transport to a smooth section of $S^2(T^b\barm)^*(2)$ over
all of $\barm$.

Since our connection is induced by a linear connection on
$T^b\barm(-1)$, the non-degeneracy of $\si^2g$ as a bundle metric on
$T^bM(-1)$ continues to hold for the extension. (View the section as a
bundle map $T^b\barm(-1)\to (T^b\barm(-1))^*$. If the extension would
be non-injective in some boundary point, one could parallely transport
a non-zero element in the kernel to obtain a contradiction to
injectivity in the interior. Indeed the signature is also preserved.)
Hence the boundary value is a non-degenerate bundle metric on
$T^b\barm(-1)|_{\partial M}$, which is parallel for the connection
induced by $\nabla$. Under the identification from Theorem
\ref{thm2.8}, this corresponds to a bundle metric on the projective
standard tractor bundle which is parallel for the normal tractor
connection and hence provides the claimed holonomy reduction.
\end{proof}

Via the general theory of holonomy reductions of Cartan connections
developed in \cite{hol-red}, this has remarkable consequences. The
case at hand has been discussed in detail in the literature, see
Section 3.1 of \cite{hol-red}, Section 3.2 of \cite{ageom} and Section
3.5 of \cite{Proj-comp}. The holonomy reduction decomposes $\partial
M$ into a disjoint union as $\partial M=\partial M_+\sqcup\partial
M_0\sqcup\partial M_-$ of so-called \textit{curved orbits}. Here
$\partial M_{\pm}$ are open subsets of $\partial M$, while $\partial
M_0$ (if non-empty) is an embedded hypersurface which separates these
two open subsets. On $\partial M_{\pm}$ the holonomy reduction
determines pseudo-Riemannian Einstein metrics of signature $(p-1,q)$
and $(p,q-1)$ and positive and negative scalar curvature, respectively, whose
Levi-Civita connections lie in (the 
restrictions of) the projective class. On $\partial M_0$, one obtains
a conformal structure of signature $(p-1,q-1)$ for which the
restriction of the tractor bundle provides the conformal standard
tractor bundle. Making the decomposition into curved orbits explicit
in our situation is easy.

\begin{cor}\label{cor3.2}
  For $\barm=M\cup\partial M$, let $g$ be a pseudo-Riemannian metric
  on $M$, which is projectively pre-compact and asymptotically
  tangentially Ricci-flat. For a defining function $\rho$, choose a
  vector field $\nu$ such that $d\rho(\nu)|_{\partial M}\equiv
  1$. Then the $2$-density $\rho^2\si^2g(\nu,\nu)$ defined over $M$
  admits a smooth extension to the boundary.

  The boundary value $\tilde C$ of this density is a smooth section of the oriented
  line bundle $\Cal E(2)\to\partial M$, which depends only on $g$ and not on the
  choice of $\rho$. Moreover, the decomposition $\partial M=\partial
  M_+\sqcup\partial M_0\sqcup\partial M_-$ is according to the strict sign of $\tilde{C}$ so
  in particular, $\partial M_0$ is the zero locus for $\tilde C$. Finally, $\tilde C$
  is a defining density for $\partial M_0$.
\end{cor}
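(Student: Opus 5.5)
Throughout, let $\bar g$ denote the smooth extension of $\si^2g$ to a section of $S^2(T^b\barm)^*(2)$ provided by Theorem \ref{thm3.2}. The plan is to read off $\tilde C$ as a value of $\bar g$ on the canonical section from Lemma \ref{lem2.6a}, and then to use the description of the curved orbits through the parallel tractor metric.

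\textbf{Step 1: extension.} Since $\rho\nu\in\Ga(T^b\barm)$, the function
\[
\rho^2\si^2g(\nu,\nu)=\bar g(\rho\nu,\rho\nu)
\]
is the value of a smooth section of $S^2(T^b\barm)^*(2)$ on a smooth section of $T^b\barm$. Hence it is smooth up to the boundary.

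\textbf{Step 2: independence of choices.} By Lemma \ref{lem2.6a}, the boundary value of $\rho\nu$, as a section of $T^b\barm|_{\partial M}$, is the canonical trivializing section $e$ of the kernel of $T^b\barm|_{\partial M}\to T\partial M$. It does not depend on $\rho$ or $\nu$. Therefore $\tilde C=\bar g(e,e)$ along $\partial M$, which is canonical. In the proof of Theorem \ref{thm2.8} this kernel is identified with $\Cal E(-1)$ via $\tau\mapsto-\tau e$. So $\tilde C$ is exactly the restriction of the parallel tractor metric $\bar g|_{\partial M}$ to the line $\Cal E(-1)\subset T^b\barm(-1)|_{\partial M}$, viewed as a $2$-density.

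\textbf{Step 3: curved orbits.} This is the standard fact for $SO(p,q)$-reductions of projective structures (\cite{hol-red}, Section 3.5 of \cite{Proj-comp}). Let $X$ be the canonical weighted section spanning $\Cal E(-1)\subset\Cal T$. The curved orbits are then given by the strict sign of the $2$-density $H(X,X)$, where $H$ is the parallel metric. By Step 2, $H(X,X)=\tilde C$. This gives the decomposition according to sign, and in particular $\partial M_0$ is the zero locus of $\tilde C$. One should check that the sign convention (a minus sign in the inclusion) does not matter, since $H(X,X)$ is quadratic in $X$.

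\textbf{Step 4: defining density.} This is the main point to argue. I would compute $\nabla\tilde C$ using the formula for the tractor connection from the proof of Theorem \ref{thm2.8}. For $\xi\in\frak X(\partial M)$ one has $\nabla_\xi X=\xi$, viewed in $\Cal T$ via the splitting. Since $H$ is parallel,
\[
\nabla_\xi\big(H(X,X)\big)=2H(X,\xi).
\]
Suppose that at a point of $\partial M_0$ both $H(X,X)=0$ and $H(X,\xi)=0$ for all $\xi$. Then $X$ lies in the kernel of $H$, because $X$ together with the images of the tangent vectors span $\Cal T$ (the splitting shows the images of the $\xi$ fill a complement of the span of $X$). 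This contradicts non-degeneracy from Theorem \ref{thm3.2}. Hence $d\tilde C$ is nonzero along its zero locus, so $\tilde C$ is a defining density for $\partial M_0$. This argument also shows independently that $\partial M_0$ is a smooth hypersurface.
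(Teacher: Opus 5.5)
Your proposal is correct and follows the paper's proof. The paper likewise uses Lemma \ref{lem2.6a} and the inclusion $\Cal E(-1)\hookrightarrow T^b\barm(-1)|_{\partial M}$ from Theorem \ref{thm2.8} to identify $\tilde C$ with the restriction of the parallel tractor metric to $S^2\Cal E(-1)$, and then reads off all curved-orbit statements from Theorem 3.1 of \cite{hol-red}. The only difference is that you prove the defining-density property directly, from $\nabla_\xi X=\xi$, parallelity of $H$ and non-degeneracy, instead of folding it into that citation; this is exactly the standard argument behind the cited result.
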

\begin{proof}
As we have seen in Section \ref{2.6a}, the boundary value of
$\rho\nu\in\Ga(T^b\barm)$ is independent of the choice of $\rho$ and spans the kernel
of the projection $T^b\barm|_{\partial M}\to T\partial M$. It also defines the
inclusion $\Cal E(-1)|_{\partial M}\to T^b\barm(-1)|_{\partial M}$ which corresponds
to the inclusion of the canonical line bundle into the standard tractor bundle, see
Theorem \ref{thm2.8}. Hence the density $\tilde{C}$ exactly corresponds to the restriction of
the tractor metric to $S^2\Cal E(-1)\subset S^2\Cal T\partial M$. Now the result
follows from Theorem 3.1 of \cite{hol-red}.
\end{proof}

\subsection{Necessity of asymptotic forms}\label{3.3}
Now we are ready to prove that projectively pre-com\-pact metrics
which are asymptotically tangentially Ricci flat always admit specific
asymptotic forms. We first provide an asymptotic form that is valid
locally around any boundary point and for any defining function for
the boundary. This is new even in the case of Ricci-flat metrics that
are projectively compact of order one. We also derive a relation
between the boundary values of the coefficients in the
expansion. Finally, we prove that locally around boundary points that
lie in $\partial M_{\pm}$, one can choose specific defining functions
for which the asymptotic form specializes to
\eqref{g-asymp}.  To formulate this, we need a notation
  for the symmetric product of one-forms, so we put
  $(\ph_1\odot\ph_2)(\xi,\eta):=\frac12(\ph_1(\xi)\ph_2(\eta)+\ph_1(\eta)\ph_2(\xi))$.

\begin{thm}\label{thm3.3}
For $\barm=M\cup\partial M$, let $g$ be a pseudo-Riemannian metric of
  signature $(p,q)$ on $M$ which is
projectively pre-compact and asymptotically tangentially Ricci flat, and let $\rho$ be
any defining function for $\partial M$. 

(1) There is a smooth function $C:\barm\to\Bbb R$, a one-form $\la\in\Om^1(\barm)$
and a section $h\in\Ga(S^2T^*\barm)$ such that, over $M$, we have
\begin{equation}\label{g-asymp2}
g=\frac{Cd\rho^2}{\rho^4}+\frac{\la\odot d\rho}{\rho^3}+\frac{h}{\rho^2}. 
\end{equation}

(2) Given any asymptotic form as in \eqref{g-asymp2}, let $\underline{C}$ be the
boundary value of $C$, and let $\underline{\la}\in\Om^1(\partial M)$ and
$\underline{h}\in\Ga(S^2T^*\partial M)$ the restrictions of the boundary values of
$\la$ and $h$ to tangential entries. Then $\underline{C}$ is a defining function for
$\partial M_0$ and the symmetric bilinear form on the bundle $T\partial M\oplus\Bbb R$, over $\partial M$,
defined by
\begin{equation}\label{trac-met}
((\xi,a),(\eta,b)) \mapsto
  \underline{C}ab-\tfrac12 a\underline{\la}(\eta)-
            \tfrac12 b\underline{\la}(\xi)+\underline{h}(\xi,\eta)  
\end{equation}
is non degenerate of signature $(p,q)$. Moreover, denoting by $\nabla^{\rho}$ the
induced connection on $T\partial M$ and by $\Rho^\rho$ its Schouten tensor, we get
for $\xi,\eta,\zeta\in\frak X(\partial M)$
\begin{gather*}
  \underline{\la}=-dC \qquad
\underline{h}(\xi,\eta)=-\tfrac12 (\nabla^\rho_\xi\underline{\la})(\eta)+\underline{C}\Rho^\rho(\xi,\eta)\\
(\nabla^\rho_{\xi}\underline{h})(\eta,\zeta)=\tfrac12\big(
\Rho^\rho(\xi,\eta)\underline{\la}(\zeta)+
\Rho^\rho(\xi,\zeta)\underline{\la}(\eta) \big). 
\end{gather*}
\end{thm}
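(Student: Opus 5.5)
The plan is to derive everything from Theorem \ref{thm3.2}: the section $\si^2g$ extends to a smooth section of $S^2(T^b\barm)^*(2)$ whose boundary value is parallel and non-degenerate. Write $\si=\rho\si^\rho$ with $\si^\rho$ nowhere vanishing (Lemma \ref{lem2.4}). Then $\rho^2g$ is a smooth section of $S^2(T^b\barm)^*$ on $\barm$, since $(\si^\rho)^2$ is a smooth nowhere vanishing $2$-density. Take the local frame $\{\rho\nu,\ze_1,\dots,\ze_n\}$ of $T^b\barm$ with dual coframe $\{\frac{d\rho}{\rho},\al_i\}$ from Section \ref{2.6a}. Expanding $\rho^2 g$ in this coframe gives
$$\rho^2g=C\frac{d\rho^2}{\rho^2}+\frac{\la\odot d\rho}{\rho}+h$$
with smooth coefficients. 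Here $C=\rho^2g(\rho\nu,\rho\nu)$, $\la=2\sum_i\rho^2g(\rho\nu,\ze_i)\al_i$, and $h=\sum\rho^2g(\ze_i,\ze_j)\al_i\al_j$, all of which are smooth functions or forms. Dividing by $\rho^2$ gives \eqref{g-asymp2}. Globalizing is done with a partition of unity, since the shape of \eqref{g-asymp2} is preserved under convex combinations for a fixed $\rho$. This proves (1).

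For (2), I identify $T^b\barm(-1)|_{\partial M}$ with $T\partial M\oplus\Bbb R$ using the splitting determined by $\rho$ from the proof of Theorem \ref{thm2.8}, trivializing densities by $\si^\rho$. A boundary element $(\xi,a)$ corresponds to the boundary value of $\tilde\xi-a\rho\nu$. Evaluating $\frac{d\rho}{\rho}$ on it gives $-a$, since $d\rho(\tilde\xi)=\Cal O(\rho)$ and the tangential parts can be chosen with $d\rho(\tilde\xi)=0$. Plugging into the boundary value of $\rho^2 g$ produces exactly \eqref{trac-met}: the $\la$-term contributes $-\frac12 a\underline\la(\eta)-\frac12 b\underline\la(\xi)$. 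This form is independent of the ambiguity in \eqref{g-asymp2}, because that ambiguity only shifts terms into $\Cal O(\rho)$ and the boundary value is determined by $\rho^2g$. Non-degeneracy and signature $(p,q)$ then follow from Theorem \ref{thm3.2}. The statement that $\underline C$ is a defining function for $\partial M_0$ follows from Corollary \ref{cor3.2}, since $\underline C$ is $\tilde C$ in the trivialization $\si^\rho$.

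The remaining identities express parallelism of this bundle metric for the tractor connection. In the $\rho$-splitting, Theorem \ref{thm2.8} gives that connection explicitly: $\nabla_\xi(\eta,\tau)=(\nabla^\rho_\xi\eta+\tau\xi,\ \nabla^\rho_\xi\tau-\Rho^\rho(\xi,\eta))$, with the boundary Schouten tensor, and with the sign conventions adjusted to match my identification $a\leftrightarrow\tau$. I will write the metric as $G((\eta,a),(\zeta,b))=\underline Cab-\frac12 a\underline\la(\zeta)-\frac12 b\underline\la(\eta)+\underline h(\eta,\zeta)$ and expand $\nabla G=0$ into its components by slot type:
\begin{itemize}
\item the $(\Bbb R,\Bbb R)$-component gives $d\underline C$ in terms of $\underline\la$, hence $\underline\la=-d\underline C$;
\item the $(\Bbb R,T\partial M)$-component gives $\nabla^\rho\underline\la$ in terms of $\underline h$ and $\underline C\Rho^\rho$, hence the formula for $\underline h$;
\item the $(T\partial M,T\partial M)$-component gives $\nabla^\rho\underline h$ in terms of $\Rho^\rho\otimes\underline\la$.
\end{itemize}
Each is a short computation once the connection formula is in place.

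The main obstacle is getting signs and weights consistent: the minus sign in the inclusion $\tau\mapsto-\tau\rho\nu$, the twisting by $\si^\rho$ (which is $\nabla^\rho$-parallel, so the density factors drop out), and the factor $\frac12$ in $\odot$. I would first check the $(\Bbb R,\Bbb R)$ component carefully to fix the conventions, then proceed to the other two components.
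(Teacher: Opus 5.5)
Your proposal is correct. For part (2) it takes a genuinely different, more conceptual route to the three differential identities than the paper does.

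\textbf{How the paper proceeds.} It never writes $\nabla G=0$ in components. Instead it differentiates $C=\si^2g(\tau^\rho\rho\nu,\tau^\rho\rho\nu)$, $\tfrac12\la(\tilde\eta)=\si^2g(\tau^\rho\rho\nu,\tau^\rho\tilde\eta)$ and $h(\tilde\eta,\tilde\ze)$ over $M$. It uses that $\si^2g$ is parallel, together with \eqref{nu-deriv} and \eqref{fund-form2}. This gives the interior formulas \eqref{dC}, \eqref{nlambda} and \eqref{h-deriv}, valid for arbitrary $\tilde\xi\in\frak X(\barm)$. Only then does it specialize to tangential $\tilde\xi$ and pass to boundary values.

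\textbf{How you proceed.} You read the identities off as the $(\Bbb R,\Bbb R)$, $(\Bbb R,T\partial M)$ and $(T\partial M,T\partial M)$ components of parallelism of the boundary tractor metric. For this you use the splitting formula from the proof of Theorem \ref{thm2.8}. I checked that with your identification $(\xi,a)\leftrightarrow\tilde\xi-a\rho\nu$, where $d\rho(\tilde\xi)=0$, the number $a$ is exactly the $\tau$-component of the paper's splitting. So in the $\si^\rho$-trivialization the connection is $D_\xi(\eta,a)=(\nabla^\rho_\xi\eta+a\xi,\ \xi\cdot a-\Rho^\rho(\xi,\eta))$, and no sign adjustment is needed. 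The three components then give precisely $\underline\la=-d\underline C$, the formula for $\underline h$, and the formula for $\nabla^\rho\underline h$.

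\textbf{What each route buys.} Your route makes it transparent that these identities just say \eqref{trac-met} is a parallel tractor metric. It also reuses the identification of $\Rho^\rho$ with the boundary Schouten tensor already established in Theorem \ref{thm2.8}. The paper's route yields \eqref{dC} and \eqref{nlambda} for non-tangential $\tilde\xi$. These are reused with $\tilde\xi=\nu$ in Theorem \ref{thm3.4}, and a computation purely on the boundary does not produce them.

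\textbf{Part (1) and independence of the form.} Your expansion of $\rho^2g$ in the coframe $\{\frac{d\rho}{\rho},\al_i\}$ is equivalent to, and slightly more direct than, the paper's successive peeling off of $\tilde C$, $\tilde\la$ and $\tilde h$. It automatically gives the normalization $\la(\nu)=0$, $h(\nu,\ )=0$. Your partition-of-unity remark also covers the globalization the paper leaves implicit. Your claim that the ambiguity "only shifts terms into $\Cal O(\rho)$" is best stated as follows: evaluating $\rho^2g$ on $\tilde\xi-a\rho\nu$ and $\tilde\eta-b\rho\nu$ produces \eqref{trac-met} for arbitrary coefficients $C,\la,h$. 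That is essentially the paper's argument.
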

\begin{proof}
We treat (1) first.  Fix a vector field $\nu$ with $d\rho(\nu)\equiv 1$ and take the density $\tilde
C=\rho^2\si^2g(\nu,\nu)$ from Corollary \ref{cor3.2}, which extends smoothly to all
of $\barm$. Since $\frac{d\rho}{\rho}\in\Ga((T^b\barm)^*)$, we see that $\frac{\tilde
  C}{\rho^2}d\rho^2\in\Ga(S^2(T^b\barm)^*(2))$, so also $k:=\si^2g- \frac{\tilde
  C}{\rho^2}d\rho^2$ is a smooth section of this bundle. By construction we get
$k(\rho\nu,\rho\nu)\equiv0$. Putting $\tilde\la(\xi):= 2 k(\xi,\rho\nu)$ for
$\xi\in\frak X_{\partial}(\barm)$ defines a smooth section
$\tilde\la\in\Gamma((T^b\barm)^*(2))$ such that $\tilde\la(\rho\nu)\equiv 0$. Hence from
Section \ref{2.6a} we know that $\tilde\la$ actually lies in the subspace
$\Ga(T^*\barm(2))$. Then, $\frac{\tilde\la\odot d\rho}{\rho}$ defines a smooth
section of $S^2(T^b\barm)^*(2)$, hence also
\begin{equation}\label{hdef}
\tilde h:=k-\frac{\tilde\la\odot d\rho}{\rho}=\si^2g-\frac{\tilde C}{\rho^2}d\rho^2-
\frac1{\rho}\tilde\la\odot d\rho
\end{equation}
is a smooth section of $S^2(T^b\barm)^*(2)$.  The first (defining) equation shows that for
$\tilde\xi\in\frak X_{\partial}(\barm)$, we get $\tilde h(\xi,\rho\nu)=0$ and using
Section \ref{2.7} we conclude that $\tilde h\in\Ga(S^2T^*\barm(2))$.

Putting $\si^\rho=\frac{\si}{\rho}$, we obtain a section of $\Cal E(1)$ which is
nowhere vanishing and we define $C:=\frac{1}{(\si^\rho)^2}\tilde C$,
$\la=\frac{1}{(\si^\rho)^2}\tilde \la$ and $h:=\frac{1}{(\si^\rho)^2}\tilde h$. Of
course, $C\in C^\infty(\barm,\Bbb R)$, $\la\in\Om^1(\barm)$ and
$h\in\Ga(S^2T^*\barm)$ and multiplying \eqref{hdef} by
$\frac1{\si^2}=\frac{1}{\rho^2(\si^\rho)^2}$, we exactly get \eqref{g-asymp2}.

For part (2), we suppose now that we have any expression of the form \eqref{g-asymp2},
say with coefficients $\hat C$, $\hat\la$ and $\hat h$ and multiply  it by
$\si^2=\rho^2(\si^{\rho})^2$. Inserting two copies of $\rho\nu$ into the result, the
last two terms are $\Cal O(\rho)$ and $\Cal O(\rho^2)$, respectively. This shows that
$(\si^{\rho})^2\hat C+\Cal O(\rho)=\tilde C$ and hence $\hat C=C+\Cal O(\rho)$. In
particular, the boundary value of $\hat C$ equals $\underline{C}$. This also shows
that $\frac{(\si^\rho)^2}{\rho}\hat\la\odot d\rho+(\si^\rho)^2\tilde{h}=k+\Cal
O(\rho)$. Inserting $\rho\nu$ and $\xi\in\frak X_{\partial}(\barm)$ into this, we
conclude that $\hat\la(\xi)+\Cal O(\rho)=\la(\xi)$, so the boundary values of
$\hat\la$ and $\la$ agree on vectors tangent to the boundary. In the same way, one
concludes that the boundary values of $\hat h$ and $h$ agree if both their entries
are tangent to the boundary.

Hence it suffices to verify the remaining properties for the the asymptotic form we
have constructed. Since $C=\frac1{(\si^{\rho})^2}\tilde C$, and $\si^\rho$ is nowhere
vanishing, Corollary \ref{cor3.2} readily implies that $\underline{C}$ is a defining
function for $\partial M_0$. For the form we have constructed, \eqref{trac-met}
simply encodes the boundary value of $\frac{\si^2}{(\si^\rho)^2}g$ on
$T^b\barm|_{\partial M}$ in the splitting determined by $\rho$, so non-degeneracy
follows readily.

To prove the relations between the coefficients in the asymptotic form, we work in
more generality than needed here for later use. Putting
$\tau^{\rho}:=\frac{1}{\si^\rho}\in\Cal E(-1)$, we observe that $C=\tilde
C(\tau^{\rho})^2=\si^2g(\tau^\rho\rho\nu,\tau^\rho\rho\nu)$ and the right hand side
is a contraction between a smooth section of $S^2(T^b\barm)^*(2)$ and two sections of
$T^b\barm(-1)$. Hence we may differentiate the right hand side using the connection
$\nabla$, which admits a smooth extension to the boundary, and use that $\si^2g$ is
parallel for this connection. Hence for any vector field $\tilde\xi\in\frak
X(\barm)$, we get
$$
\tilde\xi\cdot C=2\si^2g(\tau^\rho\rho\nu,\nabla_{\tilde\xi}\tau^{\rho}\rho\nu). 
$$
Now over $M$, we can write
\begin{equation}\label{nu-deriv}
  \nabla_{\tilde\xi}\tau^\rho\rho\nu=\nabla^\rho_{\tilde\xi}\tau^\rho\rho\nu-\tau^\rho\tilde\xi=
  \tau^\rho(\rho\nabla^\rho_{\tilde\xi}\nu+d\rho(\tilde\xi)\nu-\tilde\xi),
\end{equation}
where we used that $\nabla^\rho \tau^\rho$=0, and observe that the sum of the last two terms inserts trivially in $d\rho$.
Plugging this into the above formula for $\tilde\xi\cdot C$ and using that $\nu$
inserts trivially into $\lambda$ and $h$, we see that
\begin{equation}\label{dC}
\tilde\xi\cdot
C= 2 Cd\rho(\nabla^\rho_{\tilde\xi}\nu)+\la(\rho\nabla^\rho_{\tilde\xi}\nu)-
\lambda(\tilde\xi).
\end{equation}
Observe that all terms in the right hand side indeed admit a smooth extension to the
boundary. Now given a vector field $\xi\in\frak X(\partial M)$, we can extend to
$\tilde\xi\in\frak X_{\partial}(\barm)$ such that $d\rho(\tilde\xi)\equiv 0$ and then
obtain $\xi\cdot\underline{C}$ as the boundary value of $\tilde\xi\cdot C$. But in
this case, $\nabla^\rho_{\tilde\xi}\nu$ is smooth up to the boundary, so the middle
term in the right hand side of \eqref{dC} is $\Cal O(\rho)$. Moreover, $\rho(\nu)$ is
constant and $\Rho(\tilde\xi,\nu)$ and (by Lemma \ref{lemma2.6}) $\Rho^\rho(\tilde\xi,\nu)$ admit a smooth
extension to the boundary, so \eqref{fund-form2} shows that the first term in the
right hand side is $\Cal O(\rho)$, too. Hence we get $\xi\cdot
\underline{C}=-\underline{\la}(\xi)$,
which shows that $\underline{\la}=-d\underline{C}$.

For the second identity, we start from a vector field $\eta\in\frak X(\partial M)$
and extend it smoothly to $\tilde\eta\in\frak X_{\partial}(\barm)$ such that
$d\rho(\tilde\eta)\equiv 0$. Together with the fact that $\nu$ inserts trivially into
$\la$ and $h$, this shows that
$\si^2g(\tau^\rho\rho\nu,\tau^\rho\tilde\eta)= \frac12\la(\tilde\eta)$. As
above, we conclude that for any $\tilde\xi\in\frak X(\barm)$ we get 
$$
\tilde\xi\cdot \la(\tilde\eta)= 2
\si^2g(\nabla_{\tilde\xi}\tau^\rho\rho\nu,\tau^\rho\tilde\eta)+ 
 2\si^2g(\tau^\rho\rho\nu,\nabla_{\tilde\xi}\tau^\rho\tilde\eta). 
$$
Now in the first term we can insert \eqref{nu-deriv} while in the second term
$d\rho(\tilde\eta)\equiv 0$ implies we can replace $\nabla$ by $\nabla^\rho$ and then take
$\tau^\rho$ out of the derivative. This shows that
\begin{equation}\label{nlambda}
  \tilde\xi\cdot \la(\tilde\eta)=d\rho(\nabla^{\rho}_{\tilde\xi}\nu)\la(\tilde\eta)+
   2 h(\rho\nabla^{\rho}_{\tilde\xi}\nu,\tilde\eta)- 2 h(\tilde\xi,\tilde\eta)+
   2 \frac{C}{\rho}d\rho(\nabla^\rho_{\tilde\xi}\tilde\eta)+
  \la(\nabla^\rho_{\tilde\xi}\tilde\eta).
\end{equation}
By \eqref{fund-form2}, since $\tilde\eta\in\frak X_{\partial}(\barm)$, and
$d\rho(\tilde\eta)\equiv 0$, $d\rho(\nabla^\rho_{\tilde\xi}\tilde\eta)=\Cal O(\rho)$
so all terms in the right hand side admit a smooth extension to the boundary.

As before, we need the formula here only in the case that $\tilde\xi\in\frak
X_{\partial}(\barm)$ is an extension of $\xi\in\frak X(\partial M)$ such that
$d\rho(\tilde\xi)\equiv 0$. In this case, the first term in the right hand side of
\eqref{nlambda} is $\Cal O(\rho)$ by \eqref{fund-form2}, and the second term is $\Cal
O(\rho)$ since $\nabla^{\rho}_{\tilde\xi}\nu$ admits a smooth extension to the
boundary. Finally, \eqref{fund-form2} together with $d\rho(\tilde\eta)\equiv 0$ and
the fact that $\Rho(\tilde\xi,\tilde\eta)=\Cal O(\rho)$ implies that
$d\rho(\nabla^\rho_{\tilde\xi}\tilde\eta)=\rho\Rho^{\rho}(\tilde\xi,\tilde\eta)+\Cal
O(\rho)$. Hence \eqref{nlambda} implies that
$$
h(\tilde\xi,\tilde\eta)=-\tfrac12 \tilde\xi\cdot
\la(\eta)+ \tfrac12 \la(\nabla^\rho_{\tilde\xi}\tilde\eta)+
C\Rho^\rho(\tilde\xi,\tilde\eta).  
$$
Passing to boundary values, we obtain the claimed equation for $h$ (taking into
account that, from the proof of Theorem \ref{thm2.8}, we know that the boundary value
of $\Rho^\rho$ is the Schouten tensor of the boundary connection).

For the last identity, we take $\eta,\zeta\in\frak X(\partial M)$ and extend to
$\tilde\eta,\tilde\zeta\in\frak X_{\partial}(\barm)$ which insert trivially into
$d\rho$. Then we immediately see that
$\si^2g(\tau^\rho\tilde\eta,\tau^\rho\tilde\ze)=h(\tilde\eta,\tilde\zeta)$, and as
before we conclude that for $\tilde\xi\in\frak X(\barm)$, we get
\begin{equation}\label{h-deriv}
\tilde\xi\cdot
h(\tilde\eta,\tilde\zeta)=\si^2g(\tau^\rho\nabla^\rho_{\tilde\xi}\tilde\eta,\tilde\zeta)
+\si^2g(\tau^\rho\tilde\eta,\tau^\rho\nabla^\rho_{\tilde\xi}\tilde\zeta), 
\end{equation}
where we replaced $\nabla$ by $\nabla^\rho$ as above. Expanding the first term in the
right hand side, we obtain
$$
\tfrac{1}{2\rho}d\rho(\nabla^{\rho}_{\tilde\xi}\tilde\eta)\lambda(\tilde\ze)+
h(\nabla^{\rho}_{\tilde\xi}\tilde\eta,\tilde\ze). 
$$
and by \eqref{fund-form2}, we can rewrite the first term as
$ \frac12 (\Rho^\rho(\tilde\xi,\tilde\eta)-\Rho(\tilde\xi,\tilde\eta))
\la(\tilde\ze)$. Bringing
the second term to the other side and adding the corresponding term from the other
summand in the right hand side of \eqref{h-deriv}, the left hand side becomes
$$
\tilde\xi\cdot
h(\tilde\eta,\tilde\ze)-h(\nabla^\rho_{\tilde\xi}\tilde\eta,\tilde\ze)-
h(\tilde\eta,\nabla^\rho_{\tilde\xi}\tilde\ze).
$$
Now if we specialize to the case that  $\tilde\xi$ is an extension of
$\xi\in\frak X(\partial M)$ which inserts trivially into $d\rho$, then the boundary
value of this of course is $(\nabla^\rho_\xi\underline{h})(\eta,\zeta)$. On the other
hand, this implies that $\Rho(\tilde\xi,\tilde\eta)=\Cal O(\rho)$, and the last
claimed identity follows. 
\end{proof}

\begin{remark}\label{rem3.3}
Observe that even fixing the defining function $\rho$, the coefficients $C$, $\la$
and $h$ in the asymptotic form \eqref{g-asymp2} are not uniquely determined beyond
the (restricted) boundary values $\underline{C}$, $\underline{\la}$ and
$\underline{h}$. For example, the expression remains unchanged if we replace $\la$ by
$\la+fd\rho+\rho\al$ for $f\in C^\infty(\barm,\Bbb R)$ and $\al\in\Om^1(\barm)$ and at
the same time replace $C$ by $C-\rho f$ and $h$ by $h-\al\odot d\rho$. One way to fix
the coefficients, which we have used implicitly in the proof already, is to choose a
vector field $\nu\in\frak X(\barm)$ such that $d\rho(\nu)\equiv 1$ and then require
that $\la(\nu)\equiv 0$ and $h(\nu,\ )\equiv 0$.

In particular, this can be applied to the form \eqref{g-asymp}, where
we see that the question of whether $C\mp 1$ is $\Cal O(\rho)$ or
$\Cal O(\rho^2)$ only makes sense once requires that the cross term
$\frac{1}{\rho^3}\la\odot d\rho$ vanishes identically. So this pins
down the coefficients here.
\end{remark}

 In particular, Theorem \ref{thm3.3} shows that, for projectively pre-compact metrics, the form \eqref{g-asymp}
can only be available on open subsets whose intersection with
$\partial M$ is either contained in $\partial M_+$ or in $\partial
M_-$ (corresponding to $C=1+\Cal O(\rho)$ or $-1+\Cal O(\rho)$). On
such subsets, one can easily obtain the form \eqref{g-asymp} (for an
appropriate defining function) from Theorem \ref{thm3.3} using the
ideas in Remark \ref{rem3.3}.

\begin{cor}\label{cor3.3}
For $\barm=M\cup\partial M$, let $g$ be a pseudo-Riemannian metric on $M$ which is
projectively pre-compact and asymptotically tangentially Ricci flat.

Then locally around each point of $\partial M_{\pm}$, there is a defining function
$\hat\rho$ with respect to which $g$ can be expressed as in \eqref{g-asymp} with
$C=\pm 1+\Cal O(\rho)$.
\end{cor}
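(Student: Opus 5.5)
Starting from Theorem \ref{thm3.3}, I would fix any local defining function $\rho$ near a point $x\in\partial M_+$ (the case of $\partial M_-$ is identical up to sign) and write $g$ as in \eqref{g-asymp2}, normalized as in Remark \ref{rem3.3} by choosing $\nu$ with $d\rho(\nu)\equiv 1$ and requiring $\la(\nu)\equiv 0$, $h(\nu,\ )\equiv 0$. By Corollary \ref{cor3.2} and part (2) of Theorem \ref{thm3.3}, $\underline{C}$ is positive near $x$, so after shrinking we may assume $C>0$ on a neighbourhood $U$ of $x$. The first step is to pass to the defining function $\rho_1:=\rho/\sqrt{C}$. A direct computation gives $d\rho_1=C^{-1/2}\big(d\rho-\tfrac{\rho}{2C}dC\big)$, and substituting $d\rho=\sqrt C\,d\rho_1+\tfrac{\rho_1}{2}\tfrac{dC}{\sqrt C}$ into \eqref{g-asymp2} shows that the coefficient of $d\rho_1^2/\rho_1^4$ becomes exactly $1$. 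All other contributions can be collected into a new cross term $\rho_1^{-3}\la_1\odot d\rho_1$ with $\la_1\in\Om^1(\barm)$ and a new $\rho_1^{-2}h_1$ with $h_1$ smooth. So without loss of generality $C\equiv 1$ in \eqref{g-asymp2}, and it remains to remove the cross term.

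The second step uses the freedom from Remark \ref{rem3.3}: replacing $\la$ by $\la+fd\rho+\rho\al$ together with $C\mapsto C-\rho f$ and $h\mapsto h-\al\odot d\rho$. The component of $\la$ along $d\rho$ can thus be absorbed into $C$ at the cost of an $\Cal O(\rho)$ change, and the $\Cal O(\rho)$ part of $\la$ can be absorbed into $h$. What remains is the part of $\la$ whose restriction to $T\partial M$ is $\underline{\la}$. By Theorem \ref{thm3.3}(2) we have $\underline{\la}=-d\underline{C}$, which now vanishes because $\underline{C}$ is constant after the first step. Hence the tangential boundary value of $\la$ is zero, so $\la=fd\rho+\rho\al$ for smooth $f$ and $\al$. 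Absorbing these as above gives $g=\tfrac{C'}{\rho^4}d\rho^2+\tfrac{h'}{\rho^2}$ with $C'=1-\rho f=1+\Cal O(\rho)$. The tangential boundary value of $h'$ is still $\underline{h}$, since $\al\odot d\rho$ vanishes on $T\partial M\x T\partial M$.

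The last step is non-degeneracy of $\underline{h}$ on $T\partial M$. It follows from non-degeneracy of \eqref{trac-met}: with $\underline{C}=1$ and $\underline{\la}=0$ that form is the orthogonal sum of $ab$ and $\underline{h}$, so $\underline{h}$ is non-degenerate, of signature $(p-1,q)$ in the case of $\partial M_+$. This yields \eqref{g-asymp} for $\hat\rho:=\rho_1$.

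The main obstacle is the bookkeeping in the first step. One must check that rescaling by the smooth positive function $C^{-1/2}$ produces only terms of the form $\rho_1^{-3}(\text{smooth})\odot d\rho_1$ and $\rho_1^{-2}(\text{smooth})$, and no worse singularities or new $\rho_1^{-4}$ terms. I would verify this by expanding term by term. The key point is that $dC$ enters only multiplied by $\rho$, which lowers the singular order by one. If this expansion proved messy, an alternative would be to choose $\hat\rho$ so that $\tilde C$ from Corollary \ref{cor3.2} satisfies $\tilde C=(\si^{\hat\rho})^2$ along the boundary, i.e.\ normalize the density directly.
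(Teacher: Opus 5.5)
Your proposal is correct and follows the paper's proof essentially step by step. You rescale the defining function by $|C|^{-1/2}$ (the paper's $\hat\rho=e^f\rho$ with $f=-\tfrac12\log|C|$) to make the leading coefficient $\pm1$, then use $\underline{\la}=-d\underline{C}=0$ from Theorem \ref{thm3.3} to write the new cross term as $\ph\,d\hat\rho+\hat\rho\al$ and absorb it into $C$ and $h$. Two small points: your check via \eqref{trac-met} that $\underline{h}$ is non-degenerate supplies a step the paper leaves implicit, and the sign in $C'=1-\rho f$ should be $+$, which does not affect the argument.
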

\begin{proof}
Let us start from an expression \eqref{g-asymp2} for $g$ for a defining function
$\rho$. Then for the defining function $\hat\rho=e^f\rho$ we get $d\hat\rho=\hat\rho
df+e^fd\rho$ and inserting $\rho=e^{-f}\hat\rho$ and $d\rho=e^{-f}(d\hat\rho-\hat\rho
df)$ into \eqref{g-asymp2}, we see that we get an asymptotic form \eqref{g-asymp2}
for $g$ with respect to $\hat\rho$, which starts with
$\frac{e^{2f}C}{\hat\rho^4}d\hat\rho^2$. Now on the subset where $C$ is
non-vanishing, we can choose $f:=-\frac12\log(|C|)$ and so we get an expression of
the form
\begin{equation}\label{g-asymp3}
\frac{\pm 1}{\hat\rho^4}d\hat\rho^2+\tfrac{1}{\hat\rho^3}(\hat\la\odot
d\hat\rho)+\frac{1}{\hat\rho^2}\hat h. 
\end{equation}
Moreover, by Theorem \ref{thm3.3}, the boundary valued of $\hat\la$ vanishes in
tangential directions, so there are $\ph\in C^\infty(\barm,\Bbb R)$ and $\al\in\Om^1(\barm)$ such
that $\hat\la=\ph d\hat\rho+\hat\rho\al$. This means that \eqref{g-asymp3} equals
$$
\frac{\pm 1 +\rho\ph}{\hat\rho^4}d\hat\rho^2+\frac{1}{\hat\rho^2}(\hat
h+\al\odot d\hat\rho),
$$
so this has the claimed form.
\end{proof}

\subsection{Obstructions to projective compactness}\label{3.4}
We have already identified a complete obstruction to projective compactness in the
general setting of connections in Section \ref{2.3} via the boundary values of
$\rho\nabla^{\rho}_{\xi}\eta$ for $\xi,\eta\in\frak X(\barm)$. As we have seen there,
it is of the form $d\rho(\xi)|_{\partial M}d\rho(\eta)|_{\partial M}\psi$ for a
vector field $\psi\in\frak X(\partial M)$. Now for general asymptotic forms
\eqref{g-asymp2} we can use the computations from the proof of Theorem \ref{thm3.3}
to deduce an equivalent characterization of projective compactness for metrics.

\begin{thm}\label{thm3.4}
For $\barm=M\cup\partial M$, let $g$ be a projectively pre-compact pseudo-Riemannian
metric and assume that for some defining function $\rho$ and vector field
$\nu\in\frak X(\barm)$ such that $d\rho(\nu)\equiv 1$, we are given an expression
\eqref{g-asymp2} such that $\la(\nu)\equiv 0$ and $h(\nu,\ )\equiv 0$. (In particular
the asymptotic form constructed in the proof of Theorem \ref{thm3.3} has this
property.)

Then $g$ is projectively compact if and only if
\begin{equation}\label{C-eq}
  \nu\cdot C-2Cd\rho(\nabla^\rho_\nu\nu)=\Cal O(\rho)
\end{equation}
and for any $\tilde\ze\in\frak X(\barm)$ such that $d\rho(\tilde\ze)\equiv 0$, we get
\begin{equation}\label{lambda-eq}
-\nu\cdot\la(\tilde\ze)+\la(\nabla^\rho_\nu\tilde\ze)+
d\rho(\nabla^\rho_\nu\nu)\la(\tilde\ze)
+2\tfrac{C}{\rho}d\rho(\nabla^\rho_\nu\tilde\ze)=\Cal O(\rho). 
\end{equation}
\end{thm}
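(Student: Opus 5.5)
The plan is to reduce projective compactness to the vanishing of the obstruction field $\psi$ from Section \ref{2.3}, and then to read off the two components of $\psi$ from the identities \eqref{dC} and \eqref{nlambda} established in the proof of Theorem \ref{thm3.3}.

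First, recall from the proof of Proposition \ref{prop2.3} that for $\xi,\eta\in\frak X(\barm)$ the boundary value of $\rho\nabla^\rho_\xi\eta$ equals $d\rho(\xi)|_{\partial M}\,d\rho(\eta)|_{\partial M}\,\psi$ for a section $\psi$ of $T\partial M$. If $\psi=0$, then $\rho\nabla^\rho_\xi\eta$ is smooth with vanishing boundary value, so $\nabla^\rho_\xi\eta$ itself is smooth up to the boundary. Hence $g$ is projectively compact if and only if $\psi=0$. Since $d\rho(\nu)\equiv 1$, $\psi$ is exactly the boundary value of $\rho\nabla^\rho_\nu\nu$.

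Second, I will check that \eqref{dC} and \eqref{nlambda} hold for the given coefficients. With the normalization $\la(\nu)\equiv0$ and $h(\nu,\ )\equiv0$, the coefficients $C$, $\la$, $h$ are uniquely determined by $g$, $\rho$ and $\nu$. They are just the components of $\frac{\si^2}{(\si^\rho)^2}g$ with respect to $\rho\nu$ and $\ker(d\rho)$, as in Remark \ref{rem3.3}. So they coincide with those constructed in the proof of Theorem \ref{thm3.3}, and \eqref{dC} and \eqref{nlambda} apply to them for arbitrary $\tilde\xi\in\frak X(\barm)$.

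Putting $\tilde\xi=\nu$ in \eqref{dC} and using $\la(\nu)=0$ gives
\[
\nu\cdot C-2Cd\rho(\nabla^\rho_\nu\nu)=\la(\rho\nabla^\rho_\nu\nu).
\]
So the left side of \eqref{C-eq} has boundary value $\underline{\la}(\psi)$. Putting $\tilde\xi=\nu$ and $\tilde\eta=\tilde\ze$ in \eqref{nlambda}, and using $h(\nu,\tilde\ze)=0$, shows that the left side of \eqref{lambda-eq} equals $2h(\rho\nabla^\rho_\nu\nu,\tilde\ze)$. Its boundary value is therefore $2\underline{h}(\psi,\ze)$, where $\ze$ is the boundary value of $\tilde\ze$, and every $\ze\in\frak X(\partial M)$ arises this way. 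Some bookkeeping is needed here: one must confirm that each term is individually smooth up to the boundary. In particular, $\frac{C}{\rho}d\rho(\nabla^\rho_\nu\tilde\ze)$ is smooth by \eqref{fund-form2}, since $d\rho(\tilde\ze)\equiv0$.

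Finally, both conditions together say that $\underline{\la}(\psi)=0$ and $\underline{h}(\psi,\ze)=0$ for all $\ze$. By \eqref{trac-met}, this means the element $(\psi,0)$ pairs trivially with every $(\ze,b)$, since the pairing is $-\tfrac12 b\underline{\la}(\psi)+\underline{h}(\psi,\ze)$. Non-degeneracy of \eqref{trac-met} from Theorem \ref{thm3.3} then forces $\psi=0$. Conversely, $\psi=0$ makes both boundary values vanish. This gives the equivalence.

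The step I expect to need the most care is the justification that \eqref{dC} and \eqref{nlambda} hold for the given coefficients and with $\tilde\xi=\nu$ transverse to the boundary. In the proof of Theorem \ref{thm3.3} they were only evaluated on tangential fields, so one has to recheck the uniqueness argument for normalized coefficients and the smoothness of each individual term.
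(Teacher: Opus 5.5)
Your proposal is correct and follows essentially the paper's route: reduce projective compactness to the vanishing of $\psi$, the boundary value of $\rho\nabla^\rho_\nu\nu$, and use non-degeneracy of \eqref{trac-met} to split this into $\underline{\la}(\psi)=0$ and $\underline{h}(\psi,\ze)=0$. These are then identified with \eqref{C-eq} and \eqref{lambda-eq} via \eqref{dC} and \eqref{nlambda} at $\tilde\xi=\nu$, and your uniqueness remark justifies applying those formulas to the given normalized coefficients, a point the paper leaves tacit. One harmless slip: with $h(\nu,\ )\equiv 0$, \eqref{nlambda} shows that the left side of \eqref{lambda-eq} equals $-2h(\rho\nabla^\rho_\nu\nu,\tilde\ze)$, not $+2h(\rho\nabla^\rho_\nu\nu,\tilde\ze)$, which does not affect the equivalence.
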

Observe that the first two terms in \eqref{lambda-eq} can be written as
$-(\nabla^\rho_\nu\la)(\tilde\ze)$ since $\nabla^\rho_\nu\la$ makes sense as acting
on $\frak X_{\partial}(\barm)$. Likewise, using \eqref{fund-form2}, we see that
$d\rho(\nabla^\rho_\nu\nu)=\rho\Rho^\rho(\nu,\nu)-\rho\Rho(\nu,\nu)$ and
$\frac1{\rho}d\rho(\nabla^\rho_\nu\tilde\ze)=\Rho^\rho(\nu,\tilde\ze)-\Rho(\nu,\tilde\zeta)$.
Correspondingly, we can interpret \eqref{C-eq} and \eqref{lambda-eq} as a system of
``asymptotic ODE'' on $C\in C^\infty(\barm,\Bbb R)$ and
$\lambda\in\Ga((T^b\barm)^*)$ of the form
$$
\nu\cdot C=2fC+\Cal O(\rho) \qquad \nabla^\rho_\nu\lambda=f\la+ 2 C\phi+\Cal O(\rho), 
$$
where $f:=\rho\Rho^\rho(\nu,\nu)-\rho\Rho(\nu,\nu)$ and
$\phi(\tilde\ze)=\Rho^\rho(\nu,\tilde\ze)-\Rho(\nu,\tilde\zeta)$.

\begin{proof}
The metric $g$ is projectively compact if and only if the boundary value
$\psi\in\Ga(T\partial M)$ of $d\rho(\rho\nabla^\rho_\nu\nu)$ vanishes identically. By
the non-degeneracy statement in Theorem \ref{thm3.3} this is equivalent to vanishing
of $\underline{\la}(\psi)$ and of $\underline{h}(\psi,\zeta)$ for any vector field
$\ze\in\frak X(\partial M)$. Now we can use formula \eqref{dC} from the proof of
Theorem \ref{thm3.3} (which is valid for any $\xi\in\frak X(\barm)$) in the case
$\xi=\nu$ to obtain
$$
 \la (\nabla^\rho_\nu\nu)=\nu\cdot C-2Cd\rho(\nabla^\rho_{\nu}\nu),
$$
which immediately shows that \eqref{C-eq} is equivalent to vanishing of
$\underline{\la}(\psi)$.

For \eqref{lambda-eq}, we extend $\ze$ to a vector field $\tilde\ze\in\frak X(\barm)$
such that $d\rho(\tilde\ze)\equiv 0$. Then we use formula \eqref{nlambda} from the
proof of Theorem \ref{thm3.3} (which is valid for any $\xi\in\frak X(\barm)$) in the
case $\xi=\nu$ to obtain
$$
 - h(\rho\nabla^{\rho}_\nu\nu,\tilde\ze)=-
\tfrac12 \nu\cdot\la(\tilde\ze)+ \tfrac12\la(\nabla^\rho_\nu\tilde\ze)+
 \tfrac12 d\rho(\nabla^\rho_\nu\nu)\la(\tilde\ze)
+\frac{C}{\rho}d\rho(\nabla^\rho_\nu\tilde\ze). 
$$ Passing to boundary values, \eqref{lambda-eq} is evidently equivalent to vanishing
of $\underline{h}(\psi,\zeta)$ for any $\ze\in\frak X(\partial M)$.
\end{proof}

So let us pass to the question of projective compactness of a metric of the form
\eqref{g-asymp}. As originally proved in \cite{Proj-comp} and reproved in Theorem
\ref{thm3.1} above, $C=\pm 1+\Cal O(\rho^2)$ is sufficient for projective
compactness. The asymptotic form \eqref{g-asymp} plays a prominent role in general
relativity, going back at least to \cite{Ashtekar-Romano}. There the $\Cal
O(\rho)$-term of $C\mp 1$ is referred to as the mass aspect and it plays a crucial
role in the applications to GR. So in that sense, projective compactness is
incompatible with mass, which has also been pointed out in \cite{Bo-He,Bo-He2,BCH}. From
this point of view, it would be desirable to prove that  for a metric of the form
\eqref{g-asymp} projective compactness forces $C=\pm 1+\Cal O(\rho^2)$. Our last
result shows that this is indeed true under slightly stronger assumptions on the
asymptotics of the Ricci curvature of $g$ (which should be satisfied in the cases
relevant for GR which mostly deal with Ricci-flat metrics).

\begin{thm}\label{thm3.4.2}
  For $\barm=M\cup\partial M$, let $g$ be a projectively pre-compact
  pseudo-Riemannian metric with an  asymptotic form \eqref{g-asymp}.
  Assume further that its Ricci curvature admits a
  smooth extension to the boundary and the boundary value of this curvature vanishes if
  one of the entries is tangent to the boundary.

  Then $g$ is projectively compact if and only if $C=\pm 1+\Cal O(\rho^2)$. 
\end{thm}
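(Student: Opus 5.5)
The ``if'' direction is already contained in Theorem \ref{thm3.1}. Indeed, if $C=\pm1+\Cal O(\rho^2)$, then for any $\ze$ with $d\rho(\ze)\equiv0$ we get $\ze\cdot C=\Cal O(\rho^2)$, since $\ze\cdot\rho\equiv 0$. Hence $g$ is projectively compact. So the work is in the ``only if'' direction. Assume $g$ is projectively compact and has the form \eqref{g-asymp} with $C=\pm1+\rho f$. The goal is to show $f|_{\partial M}\equiv0$. The curvature assumption implies that $g$ is asymptotically tangentially Ricci flat. Hence everything from Theorems \ref{thm3.2} and \ref{thm3.3} is available, in particular formula \eqref{dC}.

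\textbf{Step 1: normalize the form.} I first bring \eqref{g-asymp} into the normalized shape required in Theorem \ref{thm3.4}, i.e.\ an expression \eqref{g-asymp2} with $\la\equiv0$. As in the proof of Theorem \ref{thm3.1}, take a Reeb-type field $\nu$ with $d\rho(\nu)\equiv1$ and $h(\nu,\ze)\equiv0$ for $\ze\in\ker(d\rho)$. Then replace $h$ by $h':=h-h(\nu,\nu)d\rho^2$ and $C$ by $C':=C+\rho^2h(\nu,\nu)$. This leaves $g$ unchanged, gives $h'(\nu,\ )\equiv0$, and changes $C$ only by an $\Cal O(\rho^2)$ term. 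So it suffices to prove the claim for $C'$, and I drop the primes.

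\textbf{Step 2: apply \eqref{dC} with $\la=0$.} With $\la=0$ and $\tilde\xi=\nu$, formula \eqref{dC}, equivalently the identity used in the proof of Theorem \ref{thm3.4}, reduces to
$$
\nu\cdot C=2C\,d\rho(\nabla^\rho_\nu\nu).
$$
Since $\nu\cdot\rho\equiv1$, the left hand side equals $f+\Cal O(\rho)$. It therefore remains to show that $d\rho(\nabla^\rho_\nu\nu)=\Cal O(\rho)$.

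\textbf{Step 3: show $d\rho(\nabla^\rho_\nu\nu)=\Cal O(\rho)$.} By \eqref{fund-form2}, using $\nu\cdot d\rho(\nu)=0$, we have
$$
d\rho(\nabla^\rho_\nu\nu)=\rho\Rho^\rho(\nu,\nu)-\rho\Rho(\nu,\nu).
$$
The second term is $\Cal O(\rho)$ precisely because of the strengthened hypothesis that $\Ric$, and hence $\Rho$, extends smoothly to the boundary in all entries. For the first term, projective compactness means that $\nabla^\rho$ itself extends smoothly to $\barm$. Then its curvature, and hence $\Rho^\rho(\nu,\nu)$, is smooth up to the boundary, so $\rho\Rho^\rho(\nu,\nu)=\Cal O(\rho)$. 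Combining Steps 2 and 3 gives $f=\Cal O(\rho)$, i.e.\ $C=\pm1+\Cal O(\rho^2)$.

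The main point needing care is the legitimacy of \eqref{dC} in this setting. That derivation uses the smooth extension of $\nabla$ to $T^b\barm(-1)$ and the parallel extension of $\si^2g$ (Theorems \ref{thm2.7} and \ref{thm3.2}), which require asymptotic tangential Ricci flatness; this follows from the hypothesis. One must also check that the normalization in Step 1 preserves the form \eqref{g-asymp2} with respect to the same $\nu$. Finally, one should note where the extra curvature assumption is genuinely used: only to make $\rho\Rho(\nu,\nu)$ vanish on $\partial M$. This also indicates why that assumption is needed, since otherwise the boundary value of $\rho\Rho(\nu,\nu)$ could compensate a nonzero mass aspect $f|_{\partial M}$.
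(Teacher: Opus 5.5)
Your proposal is correct and follows essentially the same route as the paper. You normalize \eqref{g-asymp} so that $\nu$ inserts trivially, apply the identity \eqref{dC} (the content of condition \eqref{C-eq} in Theorem \ref{thm3.4}) with $\xi=\nu$, and then show $d\rho(\nabla^\rho_\nu\nu)=\rho\Rho^\rho(\nu,\nu)-\rho\Rho(\nu,\nu)=\Cal O(\rho)$, using projective compactness for the first term and the strengthened Ricci hypothesis for the second. The only harmless difference is that you take the Reeb-type field $\nu$ so that no cross term appears, whereas the paper allows an arbitrary $\nu$ and moves $h(\nu,\ )$ into a cross term $\frac{1}{\rho^3}(\rho\al)\odot d\rho$ whose coefficient vanishes along the boundary.
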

\begin{proof}
 It remains to show that projective compactness of $g$ forces $C=\pm 1+\Cal
 O(\rho^2)$, so we assume that $g$ is projectively compact.  We cannot directly use
 the computations from Theorem \ref{thm3.3} here, since in the case of
 \eqref{g-asymp}, we cannot say anything about $h(\nu,\ )$ for a vector field
 $\nu\in\frak X(\barm)$ such that $d\rho(\nu)\equiv 1$. It would be easy to redo
 these computations in the current setting, but we can also modify the asymptotic
 form along the lines of Remark \ref{rem3.3} and then use Theorem \ref{thm3.4}.
 
  Starting from \eqref{g-asymp} for a defining function $\rho$, we fix $\nu\in\frak
  X(\barm)$ such that $d\rho(\nu)\equiv 1$. Define $\phi:=h(\nu,\nu)\in
  C^{\infty}(\barm,\Bbb R)$ and for $\xi\in\frak X(\barm)$ consider
  $\al(\xi):=2(h(\nu,\xi)-d\rho(\xi)\phi )$. This clearly
  defines $\al\in\Om^1(\barm)$   such that $\al(\nu)\equiv 0$ and we define $\hat
  h\in\Ga(S^2T^*\barm)$ by $\hat h:=h-\al\odot d\rho - \ph d\rho^2$.
  Then $\hat h(\nu,\eta)=h(\eta,\nu)-\ph
    d\rho(\eta)+\tfrac12\al(\eta)=0$. Thus \eqref{g-asymp} can be rewritten as 
  $$
  g=\tfrac{C +\rho^2\ph}{\rho^4}d\rho^2+
  \tfrac{1}{\rho^3}(\rho\al)\odot d\rho+\hat
    h, 
    $$
    and by construction $\nu$ inserts trivially into $\rho\al$ and into $\hat
    h$. By Theorem \ref{thm3.4}, projective compactness implies that
    $$
\nu\cdot (C +\rho^2\ph)=2Cd\rho(\nabla^\rho_\nu\nu)+\Cal O(\rho), 
$$ and of course the left hand side equals $\nu\cdot C+\Cal O(\rho)$. As we have
noted after Theorem \ref{thm3.3},
$d\rho(\nabla^\rho_\nu\nu)=\rho\Rho^\rho(\nu,\nu)-\rho\Rho(\nu,\nu)$. By projective
compactness $\nabla^\rho$ admits a smooth extension to the boundary and hence so does
its Schouten-tensor so $\rho\Rho^\rho(\nu,\nu)=\Cal O(\rho)$. But by our stronger
curvature assumptions also $\Rho(\nu,\nu)$ admits a smooth extension to the boundary
and hence $\nu\cdot C=\Cal O(\rho)$. Since for $C=\pm 1+\rho f$, we get $\nu\cdot
C=f+\Cal O(\rho)$, this completes the proof.
\end{proof}

\subsection*{Data availability statement} Data availability is not applicable to this article 
as no new data were created or analyzed in this study.

\end{document}